\newenvironment{owndesc}%
    {\begin{description}[leftmargin = 0.2cm, labelsep = 0.2cm]}
    {\end{description}}
\def\moverlay{\mathpalette\mov@rlay}
\def\mov@rlay#1#2{\leavevmode\vtop{%
    \baselineskip\z@skip \lineskiplimit-\maxdimen
    \ialign{\hfil$\m@th#1##$\hfil\cr#2\crcr}}}
\newcommand{\charfusion}[3][\mathord]{
  #1{\ifx#1\mathop\vphantom{#2}\fi
    \mathpalette\mov@rlay{#2\cr#3}
  }
  \ifx#1\mathop\expandafter\displaylimits\fi}
\DeclareRobustCommand\bigop[1]{%
  \mathop{\vphantom{\sum}\mathpalette\bigop@{#1}}\slimits@
}
\newcommand{\bigop@}[2]{%
  \vcenter{%
    \sbox\z@{$#1\sum$}%
    \hbox{\resizebox{\ifx#1\displaystyle.9\fi\dimexpr\ht\z@+\dp\z@}{!}{$\m@th#2$
}}%
  }%
}
\newcommand{\cupdot}{\charfusion[\mathbin]{\cup}{\cdot}}
\newtheorem{theorem}{Theorem}[section]
\newtheorem{lemma}[theorem]{Lemma} 
\newtheorem{corollary}[theorem]{Corollary}
\newtheorem{proposition}[theorem]{Proposition}
\newtheorem{observation}[theorem]{Observation}
\newcommand{\ovTheta}{\overline{\Theta}}
\newcommand{\diam}{\mathrm{diam}}
\newcommand{\join}{\mathrel{\ooalign{\hss$\triangleleft$\hss\cr$\triangleright$}}}
\providecommand{\keywords}[1]{\textbf{\textit{Keywords: }} #1}
\title{The Complement of the Djokovi\'{c}-Winkler Relation}
\author[1]{Marc Hellmuth} 
\author[2]{Bruno J.\ Schmidt}
\author[3]{Guillaume E.\ Scholz}
\author[1]{Sandhya Thekkumpadan Puthiyaveedu}
\affil[1]{Department of Mathematics, Faculty of Science,
  Stockholm University, SE-10691 Stockholm, Sweden }
\affil[2]{Max Planck Institute for Mathematics in the Sciences,
	Inselstra{\ss}e 22, D-04103 Leipzig, Germany}
\affil[3]{Bioinformatics Group, Department of Computer Science \&
    Interdisciplinary Center for Bioinformatics, Universit{\"a}t Leipzig,
    H{\"a}rtelstra{\ss}e~16--18, D-04107 Leipzig, Germany.}
\date{\ }
\begin{document}
\sloppy

\maketitle

\abstract{ 
The Djokovi\'{c}-Winkler relation $\Theta$ is a binary relation defined on the edge set of a given
graph that is based on the distances of certain vertices and which plays a prominent role in graph
theory. In this paper, we explore the relatively uncharted ``reflexive complement'' $\ovTheta$ of
$\Theta$, where $(e,f)\in \ovTheta$ if and only if $e=f$ or $(e,f)\notin \Theta$ for edges $e$ and
$f$. We establish the relationship between $\ovTheta$ and the set $\Delta_{ef}$, comprising the
distances between the vertices of $e$ and $f$ and shed some light on the intricacies of its
transitive closure $\ovTheta^*$. Notably, we demonstrate that $\ovTheta^*$ exhibits multiple
equivalence classes only within a restricted subclass of complete multipartite graphs. In addition,
we characterize non-trivial relations $R$ that coincide with $\ovTheta$ as those where the graph
representation is disconnected, with each connected component being the (join of) Cartesian product
of complete graphs. The latter results imply, somewhat surprisingly, that knowledge about the
distances between vertices is not required to determine $\ovTheta^*$. Moreover, $\ovTheta^*$ has
either exactly one or three equivalence classes.
}

\smallskip
\noindent
\keywords{distances, diameter, equivalence relation, complete multipartite graph, block graph, Cartesian product.}

\section{Introduction}

The Djokovi\'{c}-Winkler relation $\Theta$ is a binary relation defined on the edge set of a given
graph. Two edges $e=\{a,b\}$ and $f=\{x,y\}$ of a graph $G$ are in relation $\Theta$ precisely if
\begin{equation}
d(a,x) + d(b,y) \neq d(a,y) + d(b,x), \label{eq:Theta}
\end{equation} 
where $d$ denotes the usual distance of two vertices in $G$. This relation was introduced by
Djokovi\'{c} \cite{DJOKOVIC:73} and specified in more detail later by Winkler
\cite{W84,WINKLER1987209}. The relation $\Theta$ is one of the most prominent relations when it
comes to studying Cartesian graph products \cite{HIK:11,FEIGENBAUM1985123,WINKLER1987209,Feder:92},
isometric embeddings into Cartesian products
\cite{HIK:11,W84,DJOKOVIC:73,graham1985isometric,winkler1987metric}, median graphs \cite{KS:00,
brevsar2010periphery, BRESAR2007345}, and many more, see e.g.\ \cite{NaSa:14, ImSa:98, KK:09}.

We study here the reflexive complement $\ovTheta$ of $\Theta$ which has -- to the best of our
knowledge -- not received any attention so far. We have $(e,f)\in \ovTheta$ precisely if $e=f$ or
$(e,f)\notin \Theta$ in which case equality holds in Equ.\ \eqref{eq:Theta}. As it turns out, the
property of subclasses of edges of $G$ being or not being in relation $\ovTheta$ can be used to
characterize certain graph classes, including block graphs, trees or graphs of diameter less than
three or greater than two. In particular, there is a close connection of the relation
$\ovTheta$ and the size of the set $\Delta_{e,f} = \{d(a,x), d(b,y), d(a,y) , d(b,x)\}$ comprising
the distances used to determine if $(e,f)\in \Theta$ or not. While $\ovTheta$ is reflexive and
symmetric, it is not transitive. Hence, it is of interest to understand its transitive closure
$\ovTheta^*$ in more detail. As it turns out, the condition that $e$ and $f$ are not contained in a
so-called $K_3$ or diamond is sufficient for edges $e$ and $f$ being in relation $\ovTheta^*$. We
furthermore characterize those graphs for which $\ovTheta^*$ has one, respectively more than one
equivalence class. As we shall see, $\ovTheta^*$ has more than one equivalence class only in a
surprisingly limited subclass of complete multipartite graphs. We are furthermore interested in
understanding the conditions under which an arbitrary relation $R$ coincides with $\ovTheta$ for
some graph. Under the assumption that $\ovTheta^*$ has more than one equivalence class, $R=\ovTheta$
for some graph if and only if the graph representation of $R$ has three connected components, and
each connected component is the Cartesian product of complete graphs or the join of the Cartesian
product of complete graphs. These results immediately imply that explicit knowledge about the
distance function $d$ in $G$ is not required in order to determine $\ovTheta^*$
and that $\ovTheta^*$ has either exactly one or three equivalence classes.

\section{Preliminaries}
\label{sec:prelim}

\subsection{Graphs}
We consider undirected simple graphs $G=(V,E)$, i.e., a tuples $(V,E)$ with non-empty vertex set
$V(G)\coloneqq V$ and set of edges $E(G)\coloneqq E$ consisting of two-elementary subsets of $V$.
Two edges $e=\{a,b\}$ and $f=\{x,y\}$ of $G$ are \emph{adjacent} if $\{a,b\} \cap \{x,y\} \neq
\emptyset$. Moreover, a vertex $v$ is \emph{incident} to an edge $e$ if $v\in e$. Two graphs $G$
and $H$ are \emph{isomorphic}, in symbols $G\simeq H$, if there is a bijection $\varphi \colon V(G)
\to V(H)$ such that $\{u,v\}\in E(G)$ if and only if $\{\varphi(u),\varphi(v)\}\in E(H)$ for all
$u,v\in V(G)$.

We often write, $x_1-x_2-\cdots - x_{n}$ for a path $P_n$ with vertices $x_1,\dots,x_n$ and edges
$\{x_i,x_{i+1}\}$, $1\leq i<n$ and call it \emph{$x_1x_n$-path}. A \emph{cycle} $C_n$ is a graph
with $n$ vertices for which the removal of any vertex results in a path. A graph $G$ is
\emph{connected} if there is a $uv$-path connecting $u$ and $v$ for all $u,v\in V(G)$. We denote
with $d_G(u,v)$ the distance between $u$ and $v$, i.e., the number of edges along shortest
$uv$-paths in $G$. We put $d_G(u,v)\coloneqq \infty$ if no such $uv$-path exists.

\begin{lemma}[{ \cite[L 2.1]{HTP:23}}]\label{lem:bip-dist}
	For every edge $\{a,b\}$ in a connected graph $G$ and every vertex $x\in
	V(G)$ it holds that $0\leq |d(x,a)-d(x,b)|\leq 1$. 
\end{lemma}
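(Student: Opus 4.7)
The plan is to prove both inequalities separately, with the left one being immediate and the right one following from the triangle inequality for the graph metric $d = d_G$, applied to the edge-endpoints $a,b$ together with the vertex $x$.

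First, I would observe that $|d(x,a)-d(x,b)| \geq 0$ trivially, since absolute values are non-negative, so the lower bound in the statement needs no further justification. Note also that connectedness of $G$ ensures $d(x,a), d(x,b) < \infty$, so the quantity $|d(x,a)-d(x,b)|$ is well-defined as a non-negative integer.

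For the upper bound, the key input is that $\{a,b\} \in E(G)$ implies $d(a,b)=1$. Concatenating a shortest $xb$-path with the edge $\{b,a\}$ yields an $xa$-walk of length $d(x,b)+1$, hence by minimality $d(x,a) \leq d(x,b) + 1$. By the symmetric argument, $d(x,b) \leq d(x,a) + 1$. These two inequalities can be rewritten as $d(x,a) - d(x,b) \leq 1$ and $d(x,b) - d(x,a) \leq 1$, which together give $|d(x,a) - d(x,b)| \leq 1$, completing the proof.

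The argument is very short, and the only subtle point is really just noticing that the usual triangle inequality $d(x,a) \leq d(x,b) + d(b,a)$ for the graph metric combined with $d(a,b)=1$ already yields everything. There is no real obstacle, so I would not anticipate any technical difficulty; a one-line argument citing the triangle inequality and the fact that adjacent vertices are at distance one suffices.
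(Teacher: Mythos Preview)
Your proof is correct and is the standard triangle-inequality argument. Note that the paper does not actually prove this lemma; it is merely cited from \cite[L~2.1]{HTP:23}, so there is no in-paper proof to compare against, but your argument is exactly the expected one and would serve perfectly well as a self-contained justification.
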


The value $\mathrm{diam}_G\coloneqq \max_{u,v\in V} d_G(u,v)$ denotes the diameter of $G$. A
subgraph $H$ of $G$ is \emph{isometric} if for all $x,y \in V(H)$, $d_H(x,y)=d_G(x,y)$. Clearly, if
$H$ is an isometric subgraph of $G$, then it is an \emph{induced} subgraph of $G$, i.e., $u,v\in
V(H)$ and $\{u,v\}\in E(G)$ implies $\{u,v\}\in E(H)$. The converse, however, does not necessarily
hold.

\begin{observation}\label{obs:diam2iso}
If $H$ is an induced subgraph of $G$ such that $\diam_H\leq 2$, then $H$ is an isometric subgraph of
$G$. 	
\end{observation}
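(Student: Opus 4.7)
The plan is to show that $d_H(x,y) = d_G(x,y)$ for every pair $x,y \in V(H)$ by a short case analysis on the value of $d_H(x,y)$. One inequality is essentially free: since every $xy$-path in $H$ is also an $xy$-path in $G$, we have $d_G(x,y) \leq d_H(x,y)$ automatically, so the real task is to establish $d_G(x,y) \geq d_H(x,y)$.

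The hypothesis $\diam_H \leq 2$ restricts $d_H(x,y)$ to the values $0$, $1$, or $2$ (assuming $H$ is connected; if not, the equality $d_H(x,y) = d_G(x,y) = \infty$ in the relevant pairs must be argued slightly differently, but under the reading that we only care about pairs at finite distance in $H$ this is immediate). The cases $d_H(x,y) = 0$ and $d_H(x,y) = 1$ are trivial: in the former $x=y$, and in the latter the edge $\{x,y\}$ lies in $E(H) \subseteq E(G)$, forcing $d_G(x,y) = 1$.

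The only case requiring a small argument is $d_H(x,y) = 2$: here $x \neq y$ and $\{x,y\} \notin E(H)$. This is exactly where the induced-subgraph property gets used: because $H$ is induced, $\{x,y\} \notin E(H)$ implies $\{x,y\} \notin E(G)$, so $d_G(x,y) \geq 2$, which together with $d_G(x,y) \leq d_H(x,y) = 2$ gives the desired equality.

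There is no real obstacle here; the only subtle point is remembering to invoke that $H$ is induced in $G$, since without this assumption a pair of vertices at distance $2$ in $H$ could become adjacent in $G$ and the claim would fail. I would therefore keep the proof at two or three lines, essentially the three-case analysis above.
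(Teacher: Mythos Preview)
Your proof is correct. The paper states this as an observation without proof, so there is nothing to compare against; your three-case analysis is exactly the standard argument one would expect. One minor remark: under the paper's conventions $d_G(u,v)=\infty$ for vertices in different components and $\diam_G=\max_{u,v}d_G(u,v)$, so the hypothesis $\diam_H\le 2$ already forces $H$ to be connected, and your parenthetical worry about the disconnected case can simply be dropped.
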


We denote with $K_n=(V,E)$ the \emph{complete graph} on $n=|V|$ vertices, i.e., the graph that
satisfies $\{x,y\}\in E$ for all distinct $x,y\in V$. A \emph{complete multipartite graph}
$K_{n_1,\dots,n_\ell}=(V,E)$ is a graph whose vertex set $V$ can be partitioned into $\ell\geq 1$
sets $V_1,\dots,V_\ell$ with $1\leq n_i=|V_i|$, $1\le i\le \ell$ such that $\{x,y\}\in E$ if and
only if $x\in V_i$, $y\in V_j$ and $i\ne j$. Hence, each $V_i$ forms an independent set, i.e., the
subgraph induced by $V_i$ is edgeless. A \emph{star-tree} $S_{n} = (V,E)$ is a graph with $n+1$
vertices such that $E = \{\{v,x\}\mid v\in V\setminus \{x\}\}$ for some fixed $x\in V$. In other
words, for all distinct edges $e,f\in E$ it holds that $e\cap f=\{x\}$. Note that $S_1\simeq K_1$,
$S_2\simeq K_2$ and $S_3\simeq P_3$.

\begin{lemma}\label{lem:startree}
	Let $G=(V,E)$ be a connected graph. Then, $e\cap f\neq \emptyset$ for all $e,f\in E$ if and only
	if $G\simeq K_3$ or $G\simeq S_{|V|-1}$.  
\end{lemma}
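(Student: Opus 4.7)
\medskip
\noindent\textbf{Proof plan.} The backward direction is immediate: any two of the three edges of $K_3$ meet in one vertex, and every edge of $S_{|V|-1}$ passes through the fixed center $x$, so pairwise intersections are always nonempty.

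For the forward direction, suppose $e\cap f\neq \emptyset$ for all $e,f\in E$. If $|E|\le 1$, connectedness of $G$ forces $|V|\le 2$, and in either case $G\simeq S_{|V|-1}$ (trivially). Otherwise, I would fix two distinct edges $e_1=\{x,a\}$ and $e_2=\{x,b\}$ meeting in a common vertex $x$ (with $a\neq b$, since $e_1\neq e_2$), and then classify any further edge $e\in E\setminus\{e_1,e_2\}$ as follows: since $e\cap e_1\neq \emptyset$ and $e\cap e_2\neq \emptyset$, either $x\in e$, or else $x\notin e$, in which case necessarily $a\in e$ and $b\in e$, forcing $e=\{a,b\}$.

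From this classification I split into two cases depending on whether $\{a,b\}\in E$. If $\{a,b\}\notin E$, then every edge of $G$ contains $x$, and by connectedness every vertex $v\in V\setminus\{x\}$ must be adjacent to $x$; hence $E=\{\{v,x\}\mid v\in V\setminus\{x\}\}$ and $G\simeq S_{|V|-1}$. If $\{a,b\}\in E$, then $T\coloneqq \{\{x,a\},\{x,b\},\{a,b\}\}$ is the edge set of a triangle. I would then rule out any additional edge: an edge of the form $\{x,c\}$ with $c\notin\{a,b\}$ cannot meet $\{a,b\}$, and by the classification above any edge not containing $x$ is forced to equal $\{a,b\}$ itself. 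Thus $E=T$, and connectedness implies $V=\{x,a,b\}$, giving $G\simeq K_3$.

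The entire argument is a short case analysis; the only subtlety is the last step, where one must combine the pairwise-intersection hypothesis with connectedness of $G$ to exclude any further vertex or edge attached to the triangle $T$. I do not anticipate any harder obstacle.
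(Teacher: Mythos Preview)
Your proof is correct and follows essentially the same elementary case analysis as the paper: both arguments fix two intersecting edges sharing a vertex $x$ and observe that any third edge must either contain $x$ or equal $\{a,b\}$. The only organizational difference is that the paper first disposes of $|V|\le 3$ and then, for $|V|\ge 4$, chooses the two edges so that $\{a,b\}\notin E$ (by noting $G\not\simeq K_{|V|}$), whereas you pick the two edges arbitrarily and branch on whether $\{a,b\}\in E$; both routes arrive at the same conclusion with comparable effort.
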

\begin{proof}
	The \emph{if} direction is easy to verify. Suppose now that $e\cap f\neq \emptyset$ for all
	$e,f\in E$ of $G=(V,E)$. If $|V|\leq 3$, then connectedness of $G$ implies that $G\simeq S_1,S_2,
	S_3$ or $G\simeq K_{3}$ and we are done. Hence, assume that $|V|\geq 4$. Since $e\cap f\neq
	\emptyset$ for all $e,f\in E$ and $|V|\geq 4$ it follows that $G\not \simeq K_{|V|}$. Since $G$
	is connected and $G\not \simeq K_{|V|}$, there are at least two adjacent edges $e=\{v,x\},
	f=\{u,x\}\in E$ such that $\{u,v\}\not \in E$. Now let $f'=\{a,b\}\in E\setminus \{e,f\}$. Assume, 
	for contradiction, that $x\notin f'$. Since $e,f\neq f'$, we have
	$e\cap f' = \{v\}$ and $e\cap f' = \{u\}$. Hence, $f'=\{u,v\}\in E$; a contradiction.
	Therefore, $x\in f'$. As the latter holds for all edges $f'\in E\setminus \{e,f\}$ it follows
	that any two distinct edges intersect in $x$ only, which implies that $G\simeq S_{|V|-1}$. 
\end{proof}

\begin{figure}[ht]
\centering
\includegraphics[scale= 0.8]{./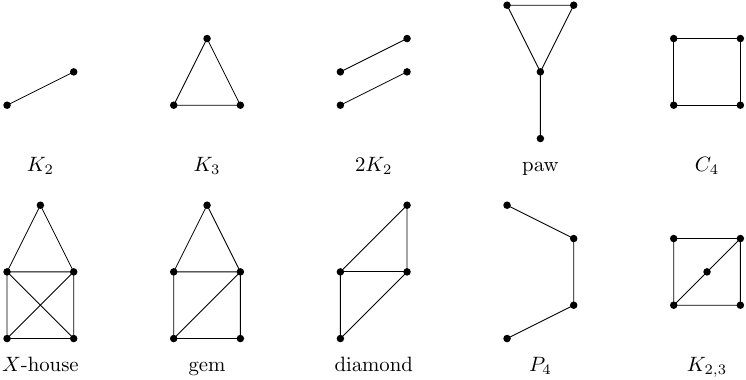}
\caption{Shown are a $K_2$, a $K_3$, a 2$K_2$, a paw, a $C_4$, an X-house, a gem, a diamond
         $K_{1,1,2}$, a $P_4$ and a $K_{2,3}$.
}	
\label{fig:graphs}
\end{figure}

We say that $G$ is $H$-free, if it does not contain $H$ as an induced subgraph.
In Fig.\ \ref{fig:graphs}, certain graphs $H$ are listed that play an important
role in this contribution. For later reference, we provide
\begin{theorem}[{\cite[Thm.\ 1]{OLARIU:88}}]\label{thm:paw-free}
	A graph $G$ is a paw-free if and only if each connected component of $G$ is $K_3$-free or
	complete multipartite. 
\end{theorem}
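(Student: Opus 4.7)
The plan is to prove both implications, with the ``if'' direction being a direct structural check and the ``only if'' direction requiring the bulk of the work. For ``if'', I would note that a paw is connected, so any induced paw of $G$ lies inside one connected component; it therefore suffices to show that $K_3$-free graphs and complete multipartite graphs are each paw-free. The former is immediate because a paw contains a triangle. For the latter, an induced paw on $\{a,b,c,d\}$ with triangle $\{a,b,c\}$ and pendant $d\sim a$ inside $K_{n_1,\dots,n_\ell}$ would place $a,b,c$ into three pairwise distinct parts, while $d\not\sim b$ and $d\not\sim c$ would force $d$ to share a part with both $b$ and $c$, a contradiction.

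For ``only if'', let $H$ be a connected component of $G$ containing a triangle $T=\{a,b,c\}$; I need to show that $H$ is complete multipartite. The key preliminary step is to prove that every vertex of $H$ is adjacent to at least two vertices of $T$. First, a vertex $v\notin T$ adjacent to exactly one vertex of $T$ together with $T$ induces a paw, so any neighbor of $T$ meets $T$ in at least two vertices. Given a general $v\in V(H)$, I would take a shortest path $v=v_0,v_1,\dots,v_k$ from $v$ to $T$; if $k\ge 2$, shortest-path minimality forces $v_{k-2}$ to have no neighbor in $T$, while $v_{k-1}$ has at least two, which I relabel as $a$ and $b$, so that $\{v_{k-2},v_{k-1},a,b\}$ induces a paw. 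Hence $k\le 1$, placing $v$ in or adjacent to $T$ with at least two $T$-neighbors.

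Armed with this, I would partition $V(H)=A\cup B\cup C\cup D$, where $A$ is $\{a\}$ together with all vertices whose $T$-neighborhood equals $\{b,c\}$, analogously for $B,C$, and $D$ collects the vertices adjacent to all of $T$. Four claims then follow by the same template --- a hypothesized bad edge or non-edge combined with two suitably chosen vertices of $T$ produces a forbidden induced paw: (i) each of $A,B,C$ is independent (an edge $vw$ in $A$ gives a paw on $\{v,w,b,a\}$); (ii) every cross-pair from $A,B,C$ is adjacent (a non-edge between $v\in A$ and $w\in B$ with $v\ne a$, $w\ne b$ gives a paw on $\{v,w,a,c\}$); (iii) every $d\in D$ is adjacent to every vertex of $A\cup B\cup C$ (a non-edge $dv$ with $v\in A\setminus\{a\}$ gives a paw on $\{d,v,a,b\}$); and (iv) non-adjacency is transitive on $D$ (a triple $d\not\sim d'$, $d'\not\sim d''$, $d\sim d''$ in $D$ gives a paw on $\{d,d',d'',a\}$). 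Claim (iv) makes $H[D]$ itself complete multipartite with some parts $D_1,\dots,D_m$, so taken together $H$ is complete multipartite with parts $A,B,C,D_1,\dots,D_m$.

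The main obstacle is not any individual case but the uniform bookkeeping: every claim reduces to the same template of producing a $4$-vertex paw from a single offending edge or non-edge, so the care lies in picking the right two auxiliary vertices of $T$ in each situation and checking that the four vertices of the purported paw are genuinely distinct and induce exactly the paw adjacency pattern.
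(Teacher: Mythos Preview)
The paper does not prove this statement at all: Theorem~\ref{thm:paw-free} is quoted verbatim from Olariu~\cite{OLARIU:88} and used as a black box, so there is no ``paper's own proof'' to compare against. Your argument is correct and self-contained; the ``if'' direction is clean, and in the ``only if'' direction each of the four claims reduces, as you say, to exhibiting a paw on four well-chosen vertices, and in every case the adjacency pattern checks out (triangle plus a pendant meeting exactly one triangle vertex). The only cosmetic point is that $a$ already satisfies $N(a)\cap T=\{b,c\}$, so the explicit ``$\{a\}\cup$'' in the definition of $A$ is redundant, but this does no harm.
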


\emph{Block graphs}  \cite{H79, BM86}, also known as \emph{clique-trees}, 
are graphs in which each maximal biconnected component is a complete graph.
Block graphs are characterized in terms of isometric cycles as well as a 
so-called four-point condition. 
\begin{proposition}[\cite{BM86}]\label{prop:block}
The following statements are equivalent for every connected graph $G$.
\begin{itemize}[noitemsep,nolistsep]
\item[(i)] $G$ is a block graph.
\item[(ii)] For all $x,y,u,v \in V(G)$ distinct, the largest two of the sums $d(x,y)+d(u,v)$, $d(x,u)+d(y,v)$ and $d(x,v)+d(y,u)$ are equal.
\item[(iii)]  $G$ does neither contain a $C_n$ with $n\geq 4$ nor a diamond as an isometric subgraph.
\end{itemize}
\end{proposition}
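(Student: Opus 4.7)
The plan is to prove the cyclic chain of implications $(i) \Rightarrow (ii) \Rightarrow (iii) \Rightarrow (i)$. For $(i) \Rightarrow (ii)$, I would exploit the block-cut structure of $G$: since each block is a complete graph, the distance between any $u, v \in V(G)$ equals the length of the unique path in the block-cut tree connecting them (counting one step per traversed block). This realises the metric of $G$ as a tree metric on a suitably weighted tree, and tree metrics classically satisfy the four-point condition. Alternatively, a direct case analysis on how $x, y, u, v$ distribute across the blocks shows in each case that the two largest of the three sums coincide.

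For $(ii) \Rightarrow (iii)$, I argue by contrapositive, producing a concrete quadruple of vertices that violates the four-point condition whenever an isometric $C_n$ ($n \geq 4$) or an isometric diamond is present in $G$. For $C_4$, the four cycle vertices yield the three sums $\{2, 4, 2\}$, whose largest two differ; an appropriate choice of four vertices on the cycle (e.g., two pairs of near-antipodal vertices) produces an analogous imbalance for any $C_n$ with $n \geq 5$. For the diamond with degree-$3$ vertices $b, c$ and degree-$2$ vertices $a, d$, the sums on $\{a, b, c, d\}$ evaluate to $\{2, 2, 3\}$, again violating $(ii)$.

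For $(iii) \Rightarrow (i)$, I argue by contrapositive: if $G$ is not a block graph, I must produce an isometric $C_n$ ($n \geq 4$) or an isometric diamond. Using the classical fact that block graphs are precisely the chordal diamond-free graphs, $G$ contains an induced diamond or an induced cycle of length $\geq 4$. Every induced diamond, every induced $C_4$, and every induced $C_5$ has diameter $2$ and thus is isometric by Observation~\ref{obs:diam2iso}, settling these subcases. For the remaining case, where $G$ contains an induced $C_n$ with $n \geq 6$ but no induced diamond and no shorter induced cycle of length $\geq 4$, one would take a shortest induced cycle of length $\geq 4$ and argue that any geodesic shortcut in $G$, combined with an arc of the cycle, produces a strictly shorter induced cycle of length $\geq 4$ or an induced diamond, contradicting minimality. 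The main obstacle lies in this last surgery: one must carefully verify that the shortcut-plus-arc configuration is a bona fide induced cycle of length $\geq 4$ (rather than a triangle or a non-induced walk), which requires a delicate analysis of how the shortcut interacts with the cycle.
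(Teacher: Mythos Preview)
The paper does not supply its own proof of this proposition: it is quoted as a result from \cite{BM86} and used as a black box (notably in the proof of Proposition~\ref{prop:blockgraph-Delta}). There is therefore no ``paper's proof'' to compare your sketch against.

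As a standalone outline, your chain $(i)\Rightarrow(ii)\Rightarrow(iii)\Rightarrow(i)$ is the natural one, and the first two implications are essentially complete: realising the block-graph metric as a tree metric to obtain the four-point condition is standard, and your explicit quadruples on the diamond and on $C_4$ do violate~$(ii)$ (for general $C_n$, $n\ge 5$, taking four consecutive cycle vertices already works, so ``near-antipodal'' is not needed). The genuine soft spot is exactly where you flag it: in $(iii)\Rightarrow(i)$, after invoking the characterisation ``block graph $=$ chordal and diamond-free'' and handling induced diamonds, $C_4$, $C_5$ via Observation~\ref{obs:diam2iso}, you still owe an argument that a \emph{shortest} induced cycle $C$ of length $\ge 4$ is isometric. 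The cleanest way to close this is to choose $u,v\in V(C)$ with $d_G(u,v)<d_C(u,v)$ and $d_G(u,v)$ minimal, take a $uv$-geodesic $P$, and observe that $P$ together with the shorter $uv$-arc of $C$ bounds a cycle of length $<|C|$; minimality of $d_G(u,v)$ forces the interior of $P$ to be anticomplete to the interior of that arc, so this shorter cycle is induced, and its length is at least $4$ since $d_C(u,v)\ge 3$ whenever a shortcut exists. This contradicts the minimality of $|C|$ and finishes the argument, but as written your proposal stops just short of it.
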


The \emph{Cartesian product} $G\Box H$ of two graphs $G$ and $H$ has vertex set $V(G\Box H)
\coloneqq V(G)\times V(H)$ and edges $\{(a,x),(b,y)\}\in E(G\Box H)$ if and only if either $a=b$ and
$\{x,y\}\in E(H)$ or $\{a,b\}\in E(H)$ and $x=y$. The $K_1$ serves as unit element, that is,
$K_1\Box G\simeq G\Box K_1 \simeq G$ for all graphs $G$. The \emph{join} of two vertex-disjoint
graphs $G$ and $H$ is the graph $G\join H$ which has vertex set $V(G\join H) \coloneqq V(G) \cupdot
V(H)$ and edge set $E(G\join H) \coloneqq E(G) \cupdot E(H) \cupdot \{\{x,y\}\mid x\in V(G), y\in
V(H)\}$.

\subsection{The relation $\Theta$ and the set $\Delta$}

For a reflexive and symmetric binary relation $R$ we denote with $R^*$ its \emph{transitive
closure}, that is, the (unique) smallest transitive relation that contains $R$. Since we are
interested in the structure of certain symmetric relations $R\subseteq E\times E$, we define
$\mathscr G_R = (W,F)$ as the graph whose vertex set is $W=E$ and whose edge set $F$ comprises all
sets $\{x,y\}$ for which $(x,y)\in R$ and $x\neq y$. It is an easy task to verify $\mathscr G_R$
completely determines $R$ assuming that $R$ is symmetric and either reflexive or non-reflexive.
Furthermore, we say that $R^*$ is \emph{$1$-trivial}, if $R^*$ consists of one equivalence class
only and \emph{$|E|$-trivial} if $R^*$ consists of precisely $|E|$ equivalence classes. If $R$ is
defined on some graph $G$, we often write $R_G$ to make clear that $R$ is taken w.r.t.\ $G$.

To recall, the Djokovi\'{c}-Winkler relation $\Theta$ is a binary relation defined on the edge set
of a given graph $G$ such that $(e,f)\in \Theta$ for edges $e=\{a,b\}$ and $f=\{x,y\}$, precisely if
$d(a,x) + d(b,y) \neq d(a,y) + d(b,x)$. Note that $\Theta$ is reflexive and symmetric but not
necessarily transitive. Nevertheless, $\Theta^*$ is an equivalence relation.

\begin{lemma}[{\cite[L.\ 11.1 \& 11.3]{HIK:11}}]
No two distinct edges on a shortest path in a graph are in relation $\Theta$.
Moreover, suppose that a walk $P$ connects the endpoints of an edge $e$ but does not
contain $e$. Then $P$ contains an edge $f$ with $(e,f)\in\Theta$. 
\label{lem:SP}
\end{lemma}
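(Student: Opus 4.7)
The plan is to treat the two assertions separately, in both cases pulling the conclusion directly out of Equation~\eqref{eq:Theta} by elementary distance bookkeeping.

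For the first claim I would fix a shortest path $p_0-p_1-\cdots-p_n$ and pick two distinct edges on it, say $e=\{p_i,p_{i+1}\}$ and $f=\{p_j,p_{j+1}\}$ with $i<j$. Since every subpath of a shortest path is itself a geodesic, $d(p_k,p_\ell)=|\ell-k|$ for all $k,\ell\in\{0,\dots,n\}$. Reading off the four relevant distances yields
\[
d(p_i,p_j)+d(p_{i+1},p_{j+1})=2(j-i)=d(p_i,p_{j+1})+d(p_{i+1},p_j),
\]
so equality holds in \eqref{eq:Theta} and $(e,f)\notin\Theta$. There is no real obstacle beyond choosing the orientation of the two edges consistently.

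For the second claim I would argue by contradiction. Let $e=\{u,v\}$ and suppose a walk $P\colon u=w_0-w_1-\cdots-w_n=v$ avoids $e$ while no edge $\{w_i,w_{i+1}\}$ of $P$ is in relation $\Theta$ with $e$. For each $i$ the hypothesis $(e,\{w_i,w_{i+1}\})\notin\Theta$ unfolds (after labelling the endpoints of $\{w_i,w_{i+1}\}$) to
\[
d(u,w_i)+d(v,w_{i+1})=d(u,w_{i+1})+d(v,w_i),
\]
which I would rewrite as $\delta(w_i)=\delta(w_{i+1})$, where $\delta(w):=d(u,w)-d(v,w)$. Telescoping along $P$ gives $\delta(u)=\delta(w_0)=\delta(w_n)=\delta(v)$. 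But $\{u,v\}\in E(G)$ forces $\delta(u)=0-1=-1$ and $\delta(v)=1-0=+1$, a contradiction, so some edge of $P$ must be in $\Theta$ with $e$.

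The only point that requires a little care is that $\Theta$ is defined on \emph{unordered} edges, so in the second part one needs to check that the two endpoints of $\{w_i,w_{i+1}\}$ may be labelled in either order; this is harmless because swapping them negates both sides of the equation above and leaves the identity $\delta(w_i)=\delta(w_{i+1})$ intact. Everything else is routine verification, and I do not anticipate further obstacles.
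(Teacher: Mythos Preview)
Your argument is correct and is essentially the standard proof of this lemma (see \cite[L.\ 11.1 \& 11.3]{HIK:11}); the paper itself does not supply a proof but simply cites that reference. In particular, the telescoping of $\delta(w)=d(u,w)-d(v,w)$ along the walk is exactly the classical device used there, and your treatment of the first assertion via $d(p_k,p_\ell)=|\ell-k|$ on a geodesic is the intended computation.
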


\begin{observation}[{\cite[Sec.\ 11.1]{HIK:11}}]\label{obs:antipodal}
If $G = C_{2n}$ is a cycle of even length, then $\Theta$ consists of all pairs of antipodal edges. 
Hence, $\Theta^*$ has $n$ equivalence classes and $\Theta = \Theta^*$ in this case. 
On the other hand, any edge of an odd cycle $G=C_{2n+1}$ is in relation $\Theta$ with its two antipodal edges.
In this case, $\Theta \neq \Theta^*$ and $\Theta^*$ is $1$-trivial.
\end{observation}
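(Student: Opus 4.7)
Label the vertices of $C_m$ cyclically as $v_0,\dots,v_{m-1}$ and the edges as $e_i = \{v_i,v_{i+1}\}$ (indices mod $m$); recall the explicit distance formula $d(v_i,v_j) = \min(|i-j|,\,m-|i-j|)$. The plan is to combine Lemma~\ref{lem:SP} (to eliminate most edge pairs) with a single substitution into Equation~\eqref{eq:Theta} for the surviving pairs, and then argue about $\Theta^*$ via the structure of the graph $\mathscr{G}_\Theta$.

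First I would use Lemma~\ref{lem:SP} to discard every pair $e_i, e_j$ whose cyclic index distance $k$ is at most $\lfloor m/2 \rfloor - 1$: such edges lie on a common shortest path of length $k+1$ in $C_m$, so they cannot be $\Theta$-related. In the even case $m = 2n$ this leaves only the antipodal pair $(e_i,e_{i+n})$, and a direct substitution gives $d(v_i,v_{i+n}) + d(v_{i+1},v_{i+n+1}) = 2n$ while $d(v_i,v_{i+n+1}) + d(v_{i+1},v_{i+n}) = 2n-2$, so the pair does lie in $\Theta$. Because every edge now has a unique antipodal partner, $\Theta$ is already an equivalence relation consisting of $n$ two-element classes, so $\Theta = \Theta^*$.

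In the odd case $m = 2n+1$ the surviving candidates are those with index difference $n$ or $n+1$, and the analogous substitution shows that each $e_i$ is $\Theta$-related precisely to the two near-antipodal edges $e_{i+n}$ and $e_{i+n+1}$. Thus $\mathscr{G}_\Theta$ is $2$-regular on $2n+1$ vertices; since $\gcd(n, 2n+1) = 1$, iterating the shift $i \mapsto i+n$ visits every index, so $\mathscr{G}_\Theta$ is a single $(2n+1)$-cycle and $\Theta^*$ is $1$-trivial. That $\Theta \neq \Theta^*$ is witnessed by any adjacent-edge pair: for instance $(e_0,e_n),(e_n,e_{2n}) \in \Theta$ but $(e_0,e_{2n}) \notin \Theta$, since the latter two edges share $v_0$ and lie on the common shortest path $v_1 - v_0 - v_{2n}$. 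The only part requiring real care is tracking the four cyclic distances in the main case analysis; everything else follows immediately from Lemma~\ref{lem:SP} and the $\gcd$ computation.
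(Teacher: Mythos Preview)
Your argument is correct. Note that the paper does not actually supply a proof of this observation: it is stated with a citation to \cite{HIK:11} and left unproved. Your write-up is therefore not being compared against an existing argument but simply assessed on its own merits, and it stands up well. The use of Lemma~\ref{lem:SP} to dispose of all non-antipodal pairs in one stroke is efficient, the explicit distance computations for the remaining pairs are accurate, and the $\gcd(n,2n+1)=1$ argument for connectivity of $\mathscr{G}_\Theta$ in the odd case is clean.

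One small caveat worth flagging: your witness for $\Theta \neq \Theta^*$ in the odd case (the pair $(e_0,e_{2n})$ lying on the shortest path $v_1 - v_0 - v_{2n}$) requires $n \geq 2$, since in $C_3$ the path $v_1 - v_0 - v_2$ is not shortest. In fact for $C_3$ one has $\Theta = \Theta^*$ (all three edges are pairwise $\Theta$-related via their common $K_3$), so the observation's claim that $\Theta \neq \Theta^*$ is itself only valid for $n \geq 2$. This is an imprecision in the statement rather than a flaw in your proof, but it would be worth adding a parenthetical ``(for $n \geq 2$)'' when you invoke that witness.
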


For (not necessarily distinct) edges $e=\{a,b\}$ and $f=\{x,y\}$ of $G$ we put 
\[\Delta_{ef} \coloneqq \{d(a,x),  d(b,y),  d(a,y), d(b,x)\}.\]
Note that $\{d(a,x), d(b,y), d(a,y), d(b,x)\}$ is a set and thus, $|\Delta_{ef}|\in \{1,2,3,4\}$. As
shown next, $\Delta_{ef}$ is of a rather specific form and always satisfies $|\Delta_{ef}|\leq 3$.

\begin{lemma}\label{lem:furtherBasics}
	For every graph and all (not necessarily distinct) edges $e=\{a,b\}$ and $f=\{x,y\}$ of $G$ the
	following properties are satisfied.  
\begin{enumerate}	
	\item $1\leq |\Delta_{ef}|\leq 3$ and, in particular, $\Delta_{ef}\subseteq \{k,k+1,k+2\}$ for
	      some integer $k\geq 0$. Moreover, if $|\Delta_{ef}|=2$ then $\Delta_{ef}= \{k,k+1\}$.
	\item Suppose that $e\cap f \neq \emptyset$. If $e=f$ or $e\neq f$ and $e,f$ are contained in a
	      $K_3$, then $\Delta_{ef}=\{0,1\}$ and $(e,f)\in \Theta$. Otherwise, i.e., $e\neq f$ and
	      $e,f$ are not contained in a $K_3$, then $\Delta_{ef}=\{0,1,2\}$ and $(e,f)\notin \Theta$.
\end{enumerate}	
\end{lemma}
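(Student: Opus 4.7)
For part~(1), my approach is to label the four distances $\alpha = d(a,x)$, $\beta = d(b,y)$, $\gamma = d(a,y)$, $\delta = d(b,x)$ and to invoke Lemma~\ref{lem:bip-dist} four times: twice with respect to the edge $\{a,b\}$ (using the vertices $x$ and $y$), and twice with respect to the edge $\{x,y\}$ (using the vertices $a$ and $b$). This yields $|\alpha-\delta|\le 1$, $|\gamma-\beta|\le 1$, $|\alpha-\gamma|\le 1$, and $|\delta-\beta|\le 1$, which I arrange as a 4-cycle of constraints with consecutive vertices $\alpha, \delta, \beta, \gamma$ (and back to $\alpha$). Any two of the four values are connected along this cycle by a path of length at most~$2$, so they differ by at most~$2$; hence $\Delta_{ef}\subseteq\{k,k+1,k+2\}$ with $k=\min \Delta_{ef}\ge 0$, which in particular yields $|\Delta_{ef}|\le 3$ and $|\Delta_{ef}|\ge 1$.

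To exclude $\Delta_{ef}=\{k,k+2\}$ in the $|\Delta_{ef}|=2$ case, observe that each of $\alpha,\beta,\gamma,\delta$ would then take a value in $\{k,k+2\}$, with both values actually attained. However, the four cycle-constraints require \emph{adjacent} entries to differ by at most~$1$, forbidding an adjacent pair with values $k$ and $k+2$. Propagating equality around the 4-cycle then forces $\alpha=\beta=\gamma=\delta$, contradicting the fact that both values are attained. Hence $\Delta_{ef}$ must be of the form $\{k,k+1\}$, up to a shift of $k$.

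For part~(2), I would first handle $e=f=\{a,b\}$ by direct substitution, obtaining $\Delta_{ee}=\{0,1\}$ and $(e,e)\in\Theta$ from $0+0\ne 1+1$. When $e\ne f$ and $e\cap f\ne\emptyset$, the two edges share exactly one vertex; without loss of generality I take $a=x$, so $e=\{a,b\}$ and $f=\{a,y\}$ with $b\ne y$. The distances $d(a,a)=0$, $d(a,y)=1$, $d(b,a)=1$ are then fixed, leaving $\Delta_{ef}=\{0,1,d(b,y)\}$. Applying Lemma~\ref{lem:bip-dist} to the edge $\{a,y\}$ and vertex~$b$ gives $d(b,y)\le 2$, while $b\ne y$ forces $d(b,y)\ge 1$. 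Splitting on whether $\{b,y\}\in E$, i.e., whether $e$ and $f$ lie in a $K_3$, pins down $\Delta_{ef}$ as $\{0,1\}$ or $\{0,1,2\}$. The $\Theta$-criterion reduces to comparing $0+d(b,y)$ with $1+1=2$, yielding $(e,f)\in\Theta$ exactly in the $K_3$ case.

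The main obstacle I expect is in part~(1): the ``range at most~$2$'' bound obtained directly from Lemma~\ref{lem:bip-dist} does not by itself exclude $\Delta_{ef}=\{k,k+2\}$, so one has to exploit the cyclic structure of the four inequalities as a whole, rather than any single pairwise consequence of them.
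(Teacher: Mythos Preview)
Your proposal is correct and follows essentially the same approach as the paper: both proofs rest entirely on four applications of Lemma~\ref{lem:bip-dist} for part~(1) and a direct case split on whether $\{b,y\}$ is an edge for part~(2). The differences are purely organizational. In part~(1), the paper fixes $k=\min\Delta_{ef}$ up front and argues by cases on the individual distances, first ruling out $|\Delta_{ef}|=4$ by contradiction and then handling the $|\Delta_{ef}|=2$ case by checking the two possible values of $d(b,x)$; your $4$-cycle framing of the constraints $|\alpha-\delta|,|\delta-\beta|,|\beta-\gamma|,|\gamma-\alpha|\le 1$ gets $\Delta_{ef}\subseteq\{k,k+1,k+2\}$ in one stroke and then excludes $\{k,k+2\}$ by the connectedness/propagation argument, which is arguably tidier. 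In part~(2), the paper invokes Lemma~\ref{lem:SP} to conclude $(e,f)\notin\Theta$ in the non-$K_3$ case, whereas you verify the $\Theta$-equation directly; both are immediate.
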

\begin{proof}
	Let $e=\{a,b\}$ and $f=\{x,y\}$ be (not necessarily distinct) edges of $G$. In what follows, we
	may assume w.l.o.g.\ that $d(a,x) = \min \{d(a,x), d(b,y), d(a,y), d(b,x)\}$. To see that
	$|\Delta_{ef}| = |\{d(a,x), d(b,y), d(a,y), d(b,x)\}|\leq 3$ assume, for contradiction, that
	$|\{d(a,x), d(b,y), d(a,y), d(b,x)\}| = 4$. Using the edge $\{a,b\}$ and vertex $x$ in Lemma
	\ref{lem:bip-dist} together with $d(a,x) < d(b,x)$ implies $d(a,x) = d(b,x)-1$. Using the edge
	$\{x,y\}$ and vertex $a$ in Lemma \ref{lem:bip-dist} together with $d(a,x) < d(a,y)$ implies
	$d(a,x) = d(a,y)-1$. Hence, $d(b,x) = d(a,y)$ and thus, $|\Delta_{ef}|\leq 3$; a contradiction.
	Therefore, $1\leq |\Delta_{ef}|\leq 3$ must hold. Put $d(a,x)=k$. By minimality of $d(a,x)$ and
	Lemma \ref{lem:bip-dist}, we have $d(b,x), d(a,y)\in \{k,k+1\}$ and $d(b,y)\in \{d(b,x)-1,d(b,x),
	d(b,x)+1\}$. This and $d(b,y)\geq k$ together with $d(b,x)\in \{k,k+1\}$ implies that $d(b,y)\in
	\{k,k+1,k+2\}$. Hence, $\Delta_{ef}\subseteq \{k,k+1,k+2\}$ for some integer $k\geq 0$. Assume
	that $|\Delta_{ef}|=2$. If $d(b,x)=k$, then $d(b,y)\geq k$ implies that $d(b,y)\in \{k,k+1\}$ and
	thus, $\Delta_{ef}= \{k,k+1\}$. If $d(b,x)=k+1$, then $d(b,y)\neq k+2$ as otherwise,
	$|\Delta_{ef}|=3$. Hence, $d(b,y) \in \{k,k+1\}$ and we obtain $\Delta_{ef}= \{k,k+1\}$.

	Suppose now that $e\cap f\neq \emptyset$. Clearly, if $e=f$, then $\Delta_{ef}=\{0,1\}$ and
	$(e,f)\in \Theta$. Assume that $e\neq f$. W.l.o.g.\ assume that $b=x$. If $e,f$ are contained in
	a $K_3$, then $d(a,x) = d(b,y) = d(a,y) = 1$ and $d(b,x) = 0$. Thus, $\Delta_{ef}=\{0,1\}$ and
	$(e,f)\in \Theta$. If there is no edge $\{a,y\}$, then $e\cup f$ induces a shortest $ay$-path of
	length two. Hence, $\Delta_{ef}=\{0,1,2\}$ and, by Lemma \ref{lem:SP}, $(e,f)\notin \Theta$.
\end{proof}

For later reference, we give here a generalization of a result that was shown for
bipartite graphs in \cite[L.\ 11.2]{HIK:11}.
 
\begin{lemma}
Let $G=(V,E)$ be a graph and $e,f\in E$ be edges that satisfy $(e,f)\in \Theta$. Then, $\Delta_{ef}
= \{k,k+1\}$ for some $k\geq 0$ and thus, $|\Delta_{ef}|=2$. 
\label{lem:theta-delta}
\end{lemma}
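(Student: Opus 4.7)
The plan is to deduce the lemma directly from Lemma \ref{lem:furtherBasics}(1) by ruling out the two extreme possibilities $|\Delta_{ef}|=1$ and $|\Delta_{ef}|=3$, since once $|\Delta_{ef}|=2$ is established, Lemma \ref{lem:furtherBasics}(1) immediately gives $\Delta_{ef}=\{k,k+1\}$ for some $k\geq 0$.

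First I would dispose of $|\Delta_{ef}|=1$: all four distances coincide with some value $k$, so $d(a,x)+d(b,y) = 2k = d(a,y)+d(b,x)$, contradicting $(e,f)\in\Theta$.

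Next I would handle the more substantive case $|\Delta_{ef}|=3$. Following the proof of Lemma \ref{lem:furtherBasics}(1), I may assume w.l.o.g.\ that $d(a,x)=k$ is the minimum of the four distances. Applying Lemma \ref{lem:bip-dist} to the edge $\{a,b\}$ with vertex $x$, and to the edge $\{x,y\}$ with vertex $a$, forces $d(b,x),d(a,y)\in\{k,k+1\}$. Since $|\Delta_{ef}|=3$ requires the value $k+2$ to appear in $\Delta_{ef}$, we must have $d(b,y)=k+2$. Applying Lemma \ref{lem:bip-dist} once more, this time to the edge $\{a,b\}$ with vertex $y$ and to the edge $\{x,y\}$ with vertex $b$, yields $d(a,y)\geq d(b,y)-1=k+1$ and $d(b,x)\geq d(b,y)-1=k+1$. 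Combined with the previous bounds, this pins down $d(a,y)=d(b,x)=k+1$. But then
\[
d(a,x)+d(b,y) = k+(k+2) = 2k+2 = (k+1)+(k+1) = d(a,y)+d(b,x),
\]
again contradicting $(e,f)\in\Theta$.

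Hence $|\Delta_{ef}|=2$, and Lemma \ref{lem:furtherBasics}(1) yields $\Delta_{ef}=\{k,k+1\}$ for some $k\geq 0$. I do not expect any real obstacle: the whole argument is a short case analysis driven by the triangle-type bound in Lemma \ref{lem:bip-dist}, and the only subtlety is noticing that once $d(b,y)=k+2$ is forced, the same bound tightens $d(a,y)$ and $d(b,x)$ enough to make the two sums equal.
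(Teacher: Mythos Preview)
Your proof is correct and follows essentially the same approach as the paper: both arguments hinge on showing that if the minimum distance is $d(a,x)=k$ and $d(b,y)=k+2$, then Lemma~\ref{lem:bip-dist} forces $d(a,y)=d(b,x)=k+1$, making the two sums equal and contradicting $(e,f)\in\Theta$. The only cosmetic difference is that the paper separates the cases $e=f$, adjacent, and non-adjacent explicitly, whereas you handle them uniformly by invoking Lemma~\ref{lem:furtherBasics}(1) up front and ruling out $|\Delta_{ef}|\in\{1,3\}$ directly.
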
 
\begin{proof}
Let $e,f\in E$ be edges that satisfy $(e,f)\in \Theta$. If $e=f$, then $\Delta_{ef}=\{0,1\}$ and,
therefore, $|\Delta_{ef}|=2$. Hence, suppose that $e\neq f$. If $e$ and $f$ share a common vertex,
then $(e,f)\in \Theta$ together with Lemma \ref{lem:SP} implies that $e$ and $f$ must be contained
in a $K_3$. By Lemma~\ref{lem:furtherBasics}, $\Delta_{ef}=\{0,1\}$ and thus, $|\Delta_{ef}|=2$. 
  
Suppose now that $e=\{x,y\}$ and $f=\{a,b\}$ are non-adjacent. Without loss of generality, suppose
that $d(a,x)=\min\{d(a,x), d(b,y), d(a,y), d(b,x)\}$. Put $k=d(a,x)$. Lemma~\ref{lem:bip-dist},
together with the minimality of $d(a,x)$, implies that $d(a,y) \in \{k,k+1\}$ and $d(b,x) \in
\{k,k+1\}$. Using Lemma~\ref{lem:bip-dist} again together with the fact that $d(b,y)\geq k$, 
we obtain $d(b,y) \in \{k,k+1,k+2\}$. Suppose
first that $d(b,y)=k+2$. Then by Lemma~\ref{lem:bip-dist}, we obtain $d(a,y)=d(b,x)=k+1$. It
follows, that $d(a,x)+d(b,y)=2k+2=d(a,y)+d(b,x)$, a contradiction to $(e,f) \in \Theta$. Hence,
$d(b,y) \in \{k,k+1\}$ must hold. In particular, we have $|\Delta_{ef}| \leq 2$. Since $(e,f) \in
\Theta$, $|\Delta_{ef}|=1$ cannot hold, so $|\Delta_{ef}|=2$ follows. In particular, $\Delta_{ef} =
\{k,k+1\}$.
\end{proof}

The next results summarizes useful properties provided by the size of the set $\Delta$.

\begin{proposition}\label{pr:Delta-distinctEdges}
The following statements hold for every connected graph $G=(V,E)$ where $E\neq \emptyset$. 
\begin{enumerate}
	\item $|\Delta_{ef}|  \neq 3$ for all (distinct)  $e,f\in E$  $\iff$  $G$ is a complete graph.  

	\item $|\Delta_{ef} | =1$ for all distinct $e,f\in E$ $\iff $ $G$ is a $K_2$. 
	
			In other words, there is no connected graph that contains at least two edges and for which
			$|\Delta_{ef} | =1$ hold for all distinct $e,f\in E$.
	
	\item $|\Delta_{ef} | =2$ for all (distinct) $e,f\in E$ $\iff$ $G$ is a $K_2$ or $K_3$ $\iff$
	      $\Theta_G = \Theta_G^*$ and $\Theta_G^*$ is $1$-trivial. 
		 
	\item $\begin{aligned}[t]
					|\Delta_{ef} |=3 \text{ for all distinct } e,f\in E &\iff |\Delta_{ef} |\neq 2 \text{ for all distinct } e,f\in E
									\\ &\iff \Theta_G = \Theta_G^* \text{ and } \Theta_G^* \text{ is } |E|\text{-trivial} \iff G \text{ is a tree.}	  
			  \end{aligned}$		
\end{enumerate}
\end{proposition}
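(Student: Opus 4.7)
The plan is to handle each of the four items in turn, repeatedly invoking Lemmas~\ref{lem:furtherBasics},~\ref{lem:theta-delta}, and~\ref{lem:SP}. For (1), the ``if'' direction is immediate since all pairwise distances in $K_n$ lie in $\{0,1\}$. For the converse, a connected non-complete $G$ contains two vertices $u,v$ at distance exactly $2$, and the two edges of a shortest $uv$-path form an adjacent pair not contained in any common $K_3$ (else $u,v$ would be adjacent), so Lemma~\ref{lem:furtherBasics}(2) yields $|\Delta_{ef}|=3$. For (2), the condition is vacuous on $K_2$; for any connected $G$ with $|E|\geq 2$, some two edges must share a vertex (otherwise $G$ would be disconnected), and Lemma~\ref{lem:furtherBasics}(2) then forces $|\Delta_{ef}|\in\{2,3\}$ for this pair, ruling out the value $1$.

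For (3), I would close a cycle of implications among the three conditions $|\Delta_{ef}|=2$ for all distinct $e,f$ (labelled A), $G\in\{K_2,K_3\}$ (labelled B), and $\Theta_G=\Theta_G^*$ is $1$-trivial (labelled C). Both (B)$\Rightarrow$(A) and (B)$\Rightarrow$(C) follow directly from Lemma~\ref{lem:furtherBasics}(2): on $K_2$ the claim is vacuous, while on $K_3$ any two distinct edges share a vertex and lie in the common triangle. For (C)$\Rightarrow$(B), note that (C) means $\Theta_G=E\times E$; Lemma~\ref{lem:SP} then forbids shortest paths of length $\geq 2$, giving $\diam_G\leq 1$ and so $G=K_n$; but for $n\geq 4$ any two non-adjacent edges have $\Delta_{ef}=\{1\}$ and hence are not in $\Theta_G$, contradicting (C). The main obstacle is (A)$\Rightarrow$(B). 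Here I would first use Lemma~\ref{lem:furtherBasics}(2) to deduce that every pair of adjacent distinct edges lies in a common $K_3$. Assuming for contradiction that $|V(G)|\geq 4$, I take a triangle $abc$ in $G$ and use connectedness to find a vertex $d\notin\{a,b,c\}$ adjacent to, say, $a$. The common-$K_3$ property applied to the adjacent pairs $(\{a,d\},\{a,b\})$ and $(\{a,d\},\{a,c\})$ forces $\{b,d\},\{c,d\}\in E$, producing an induced $K_4$ on $\{a,b,c,d\}$; its two non-adjacent edges then have all cross-distances equal to $1$, giving $|\Delta_{ef}|=1$, a contradiction. This reduces to $|V(G)|\leq 3$, after which a quick inspection of $P_3$ versus $K_3$ completes the argument.

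For (4), I would close the chain $|\Delta_{ef}|=3$ for all distinct $e,f$ $\Rightarrow$ $|\Delta_{ef}|\neq 2$ for all distinct $e,f$ $\Rightarrow$ $\Theta_G=\Theta_G^*$ and $\Theta_G^*$ is $|E|$-trivial $\Rightarrow$ $G$ is a tree $\Rightarrow$ $|\Delta_{ef}|=3$ for all distinct $e,f$. The first implication is trivial. For the second, the contrapositive of Lemma~\ref{lem:theta-delta} yields $(e,f)\notin\Theta_G$ for all distinct $e,f$, so $\Theta_G$ is the diagonal, coincides with $\Theta_G^*$, and is $|E|$-trivial. For the third, if $G$ contained a cycle through some edge $e$, Lemma~\ref{lem:SP} applied to the walk around the cycle avoiding $e$ would produce some $f\neq e$ with $(e,f)\in\Theta_G$, contradicting the diagonal structure of $\Theta_G$. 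Finally, for the closing implication, let $e=\{a,b\}$ and $f=\{x,y\}$ be distinct edges of the tree $G$: either they share a vertex (and cannot lie in a common $K_3$, since a tree has no cycles), in which case Lemma~\ref{lem:furtherBasics}(2) gives $\Delta_{ef}=\{0,1,2\}$; or they are disjoint, and the unique path in the tree joining $\{a,b\}$ to $\{x,y\}$ yields $\Delta_{ef}=\{\ell,\ell+1,\ell+2\}$ where $\ell$ is the distance between its two endpoints.
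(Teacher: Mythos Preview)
Your proof is correct and follows essentially the same approach as the paper. The only minor variation is in part~(3): the paper derives (A)$\Rightarrow$(B) more economically by invoking part~(1) (so $G$ is complete, and then $n\le 3$ since a $K_4$ contains disjoint edges with $\Delta_{ef}=\{1\}$), and it closes the cycle via (C)$\Rightarrow$(A) using Lemma~\ref{lem:theta-delta} rather than your (C)$\Rightarrow$(B) via Lemma~\ref{lem:SP} and $\diam_G\le 1$.
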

\begin{proof}
	Observe first that for every edge $e$ it holds that $\Delta_{ee} = \{0,1\}$ and, thus,
	$|\Delta_{ee}|=2\neq 3$. For Condition (1) suppose first that $G=(V,E)$ is a complete graph and
	let $e,f\in E$. If $e=f$, then $|\Delta_{ef} | \neq 3$. Hence, assume that $e=\{a,b\}\neq f=\{x,y\}$.
	If $e\cap f\neq \emptyset$ then $C\simeq K_n$ implies that the edges $e,f$ are contained in a
	$K_3$ and, by Lemma \ref{lem:furtherBasics}, $|\Delta_{ef}|=2\neq 3$. Suppose now that $e\cap f=
	\emptyset$. Since $G$ is a complete graph, it holds that $\Delta_{ef} = \{1\}$ and thus,
	$|\Delta_{ef} | \neq 3$. For the converse assume, by contrapositioning, that $G$ is not a
	complete graph and thus, distinct from a $K_2$. Since $G$ is connected and $E\neq \emptyset$,
	there are two edges $e\neq f$ that share a common vertex but are not contained in a common $K_3$.
	Lemma \ref{lem:furtherBasics} implies $|\Delta_{ef}|=3$. Thus Condition (1) holds. 

	For Condition (2), if $G$ is a $K_2$, then the statement is vacuously true. Conversely, if
	$G$ is not a $K_2$, it must contain two incident edges since $G$ is connected and $E\neq
	\emptyset$. By Lemma \ref{lem:furtherBasics}, $|\Delta_{ef} | \neq 1$. Thus Condition (2) holds. 

	For Condition (3) suppose first that for all (distinct) edges $e$ and $f$ of $G$ it holds that
	$|\Delta_{ef} | =2$. By contradiction, assume that $G$ is not a $K_2$ or $K_3$. This and
	Condition (1) implies that $G\simeq K_n$, $n>3$. Hence, there are two vertex-disjoint edges
	$e\neq f$ such that $\Delta_{ef} = \{1\}$; a contradiction to $|\Delta_{ef} | =2$. Hence, $G$ is
	a $K_2$ or $K_3$. Moreover, if $G$ is a $K_2$ or $K_3$ the one easily verifies that $\Theta_G =
	\Theta_G^*$ and $\Theta_G^*$ is $1$-trivial. Furthermore, if $\Theta_G =
	\Theta_G^*$ and $\Theta_G^*$ is $1$-trivial, then $(e,f)\in \Theta$ for all edges $e$ and $f$ of
	$G$. This together with Lemma \ref{lem:theta-delta} implies that $|\Delta_{ef} | =2$ for all for
	all edges $e$ and $f$ of $G$. In summary, Statement (3) is always satisfied. 
	
	For Condition (4), clearly $|\Delta_{ef} |=3$ implies $|\Delta_{ef} |\neq 2$ for all distinct
	$e,f\in E$. Suppose that $|\Delta_{ef} |\neq 2$ for all distinct $e,f\in E$. Contraposition of
	Lemma \ref{lem:theta-delta} implies $(e,f)\notin \Theta$ for all distinct $e,f\in E$. Hence,
	$\Theta_G = \{(e,e)\mid e\in E\}$ and thus, $\Theta_G = \Theta_G^*$ and $\Theta_G^*$ is
	$|E|$-trivial. Assume now that $\Theta_G = \Theta_G^*$ and $\Theta_G^*$ is $|E|$-trivial. Assume,
	for contradiction, that $G$ contains a cycle $C$. In this case, Lemma \ref{lem:SP} implies that
	there are two edges $e,f$ in $C$ such that $(e,f)\in \Theta_G$; a contradiction. Hence, $G$ does
	not contain cycles. As $G$ is connected, it must be a tree. Assume now that $G$ is a tree. Let
	$e=\{a,b\}$ and $f = \{x,y\}$ be two distinct edges of $G$. We can assume w.l.o.g.\ that
	$d_G(a,y) = \min \Delta_{ef}$. Since $G$ is a tree, there is a unique shortest $xb$-path $P$.
	Since $k\coloneqq d_G(a,y) = \min \Delta_{ef}$, the edges $e$ and $f$ are contained in $P$. One
	easily verifies that $d_G(x,a) = d_G(y,b) = k+1$ and $d_G(x,b)= k+2$ must hold and, therefore,
	$|\Delta_{ef}|=3$. 
\end{proof}

Proposition \ref{pr:Delta-distinctEdges}(3) implies:

\begin{corollary}	
Suppose that $\Theta_G = \Theta_G^*$. Then, $\Theta_G^*$ is not $1$-trivial if and only if there are
two edges $e$ and $f$ in $G$ with $|\Delta_{ef}| \neq 2$.
\end{corollary}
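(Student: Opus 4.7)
The plan is to reduce the claim directly to Proposition~\ref{pr:Delta-distinctEdges}(3), which already packages together the three equivalent conditions: (a) $|\Delta_{ef}|=2$ for all distinct $e,f\in E$, (b) $G\simeq K_2$ or $G\simeq K_3$, and (c) $\Theta_G=\Theta_G^*$ and $\Theta_G^*$ is $1$-trivial.

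First I would make the trivial observation that $|\Delta_{ee}|=\{0,1\}$, and hence $|\Delta_{ee}|=2$, for every edge $e\in E$. Consequently, the clause ``there are two edges $e,f$ in $G$ with $|\Delta_{ef}|\neq 2$'' forces $e\neq f$ and is therefore equivalent to ``there exist distinct edges $e,f\in E$ with $|\Delta_{ef}|\neq 2$'', i.e., to the logical negation of condition (a) above.

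Next I would invoke Proposition~\ref{pr:Delta-distinctEdges}(3) to turn the negation of (a) into the negation of (c): the existence of distinct $e,f$ with $|\Delta_{ef}|\neq 2$ is equivalent to ``$\Theta_G\neq\Theta_G^*$ or $\Theta_G^*$ is not $1$-trivial''. Under the standing hypothesis $\Theta_G=\Theta_G^*$ of the corollary, the first disjunct is false, so this simplifies precisely to ``$\Theta_G^*$ is not $1$-trivial'', yielding the claimed equivalence.

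There is essentially no obstacle here; the corollary is a purely logical bookkeeping consequence of Proposition~\ref{pr:Delta-distinctEdges}(3). The only care needed is to align the ``two edges'' phrasing of the corollary with the ``distinct'' phrasing of the proposition, which the observation $|\Delta_{ee}|=2$ handles immediately.
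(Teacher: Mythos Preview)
Your proposal is correct and matches the paper's approach exactly: the paper gives no explicit proof beyond the single line ``Proposition~\ref{pr:Delta-distinctEdges}(3) implies,'' and you have simply spelled out the contraposition and the use of the hypothesis $\Theta_G=\Theta_G^*$ that this one-liner encodes. (Minor typo: you wrote $|\Delta_{ee}|=\{0,1\}$ where you meant $\Delta_{ee}=\{0,1\}$.)
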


\section{The relation $\overline{\boldsymbol{\Theta}}$}

We are in particular interested in the reflexive complement of $\Theta_G$ and put, for a given graph
$G=(V,E)$, \[\ovTheta_G \coloneqq \{(e,f) \mid e=f \text{ or } e\neq f \text{ and } (e,f)\notin
\Theta \text{ for all } e,f\in E\}.\] By definition, $\Theta_G \cap \ovTheta_G = \{(e,e)\mid e\in
E\}$. Note that $\ovTheta_G$ is reflexive and symmetric and we have $(e,f)\in \ovTheta_G$ for
distinct $e=\{a,b\}$ and $f=\{x,y\}$ precisely if \[d(a,x) + d(b,y) = d(a,y) + d(b,x). \]
 
We start with a few simple observations.  
\begin{lemma}\label{lem:path}
Let $G$ be a graph and let $P$ be an induced path of $G$. Then, $(e,f) \in \ovTheta_G^*$
for all edges $e,f$ in  $P$.
\end{lemma}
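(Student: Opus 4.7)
The plan is to reduce the statement to the trivial observation that two consecutive edges of an induced path fail to lie in a common $K_3$, combined with Lemma \ref{lem:furtherBasics}(2).

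Write $P$ as $x_1 - x_2 - \cdots - x_n$. First I would show that any two \emph{consecutive} edges $e_i = \{x_i, x_{i+1}\}$ and $e_{i+1} = \{x_{i+1}, x_{i+2}\}$ satisfy $(e_i, e_{i+1}) \in \ovTheta_G$. These two edges share the vertex $x_{i+1}$, and since $P$ is an \emph{induced} path, $\{x_i, x_{i+2}\} \notin E(G)$. Hence $e_i$ and $e_{i+1}$ are adjacent but are not contained in a common $K_3$ of $G$. Lemma \ref{lem:furtherBasics}(2) then yields $(e_i, e_{i+1}) \notin \Theta_G$, so by definition of the reflexive complement $(e_i, e_{i+1}) \in \ovTheta_G$.

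Next I would invoke the definition of the transitive closure: since $(e_i, e_{i+1}) \in \ovTheta_G \subseteq \ovTheta_G^*$ for every $1 \le i \le n-2$, chaining these pairs gives $(e_i, e_j) \in \ovTheta_G^*$ for all edges $e_i, e_j$ of $P$. Reflexivity handles the case $e = f$.

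There is no real obstacle here; the proof is a direct combination of the inducedness of $P$ with part (2) of Lemma \ref{lem:furtherBasics}. The only point worth double-checking is that the case $e = f$ is covered (it is, by the reflexivity of $\ovTheta_G$) and that two-edge paths ($n=3$) and the degenerate single-edge path ($n=2$) are included -- both are immediate.
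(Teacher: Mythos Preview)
Your proof is correct and follows essentially the same approach as the paper: show that consecutive edges of $P$ are in $\ovTheta_G$ (using that $P$ is induced, so no $\{x_i,x_{i+2}\}$ edge exists), then chain via the transitive closure. The only cosmetic difference is that the paper invokes Lemma~\ref{lem:SP} (the two consecutive edges form a shortest $x_ix_{i+2}$-path) whereas you invoke Lemma~\ref{lem:furtherBasics}(2) (the two edges are adjacent but not in a common $K_3$); these are the same observation, and indeed the relevant case of Lemma~\ref{lem:furtherBasics}(2) is itself proved via Lemma~\ref{lem:SP}.
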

\begin{proof}
Let $P$ be an induced path of $G$. Observe first that any two adjacent $e' = \{x,u\},f'=\{y,u\}$ in
$P$ form a shortest $xy$-path since $P$ is induced and thus, no edge $\{x,y\}\in E(G)$ exists. This
together with Lemma \ref{lem:SP} and the fact that $\ovTheta_G$ is reflexive
implies that all adjacent edges in $P$ are in relation $\ovTheta_G$. It
is now a straightforward task to verify that all edges $e$ and $f$ of $P$ satisfy $(e,f)
\in \ovTheta^*$.
\end{proof}

\begin{lemma}\label{lem:cut-edge}
If $G=(V,E)$ contains a cut-edge $e$, i.e., $(V,E\setminus \{e\})$ has more connected components
than $G$, then $\ovTheta^*_G$ is $1$-trivial.
\end{lemma}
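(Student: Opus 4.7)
The plan is to show the much stronger statement that the cut-edge $e$ itself is in relation $\ovTheta_G$ with \emph{every} other edge of $G$; this immediately implies that $\ovTheta_G^*$ is $1$-trivial, since all edges then lie in the same equivalence class as $e$.

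Write $e=\{u,v\}$ and let $G_u$ and $G_v$ denote the two components of $(V,E\setminus\{e\})$ containing $u$ and $v$, respectively. First I would record the elementary but crucial distance identity: since $e$ is the unique edge joining $V(G_u)$ and $V(G_v)$, every $zv$-path with $z\in V(G_u)$ must traverse $e$, and symmetrically for the other direction. Therefore $d_G(z,v)=d_G(z,u)+1$ for all $z\in V(G_u)$, and $d_G(z,u)=d_G(z,v)+1$ for all $z\in V(G_v)$.

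Next, I would observe that any edge $f\neq e$ has both endpoints in a single component of $(V,E\setminus\{e\})$; otherwise $f$ would be a second edge between $V(G_u)$ and $V(G_v)$, contradicting that $e$ is a cut-edge. Assume without loss of generality $f=\{x,y\}$ with $x,y\in V(G_u)$. Applying the identity above to both $x$ and $y$ yields
\[
 d_G(u,x)+d_G(v,y)\;=\;d_G(u,x)+d_G(u,y)+1\;=\;d_G(u,y)+d_G(v,x),
\]
so $(e,f)\in\ovTheta_G$ by definition. The symmetric case $x,y\in V(G_v)$ is completely analogous.

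Since this establishes that $e$ is $\ovTheta_G$-related to every edge of $G$, the equivalence class of $e$ under $\ovTheta_G^*$ contains all of $E$, giving $1$-triviality. There is no serious obstacle here: the argument is purely a one-line distance computation once the decomposition into the two sides of the cut-edge is in place, and no appeal to Lemma~\ref{lem:path} or to induced paths is needed.
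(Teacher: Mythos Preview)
Your proof is correct and follows essentially the same approach as the paper: both show directly that the cut-edge $e$ is in relation $\ovTheta_G$ with every other edge via the distance identity $d_G(z,v)=d_G(z,u)+1$ across the cut, and then conclude $1$-triviality. The only minor point you omit is the case where $G$ itself is disconnected and $f$ lies in a component not containing $e$ at all --- the paper disposes of this by noting that all four distances are then $\infty$ --- but this is trivial to add.
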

\begin{proof}
Let $e=\{a,b\}$ be a cut-edge in $G$. For any edge $f=\{u,v\}$ in $G$ that is not located in the
connected component of $G$ that contains $e$ it holds that $d(a,u) + d(b,v) = \infty = d(a,v) +
d(b,u)$ and, therefore, $(e,f)\in \ovTheta\subseteq \ovTheta^*$. Suppose that there is an edge
$f=\{u,v\}$ that is contained in the connected component $C$ of $G$ that contains $e$. Put
$H=(V,E\setminus \{e\})$. Since $e$ is a cut-edge and $e$ is located in $C$, the induced subgraph
$H[C]$ of $H$ must be disconnected. In particular, $H[C]$ decomposes into two connected components
$C_a$ and $C_b$ that contain $a$ and $b$, respectively. Assume, there is an edge $f=\{u,v\}\in C_b$.
Since $a$ is not adjacent to any vertex in $C_b$, it follows that any shortest path in $G$ from $a$
to any vertex in $C_b$ must contain vertex $b$. Thus, $d_G(a,u)=d_G(b,u)+1$ and
$d_G(a,v)=d_G(b,v)+1$ and, therefore, $d_G(a,u)+d_G(b,v)=(d_G(b,u)+1)+(d_G(a,v)-1)=d(b,u)+d(a,v)$.
Hence, $(e,f)\in \ovTheta\subseteq \ovTheta^*$. By similar arguments, if $f$ is located in $C_a$,
then $(e,f)\in \ovTheta^*$. In summary, all edges $f$ in $G$ satisfy $(e,f)\in \ovTheta^*$ which
immediately implies that $\ovTheta^*_G$ is $1$-trivial.
\end{proof}

\begin{lemma}\label{lem:Theta*1t}
Let  $G=(V,E)$ be a  graph and  $\{R,P\} =\{\Theta_G, \ovTheta_G\}$. 
	Then, the following statements are satisfied. 
	\begin{enumerate}
		\item If $R^*$ is not $1$-trivial, then $P^*$ is $1$-trivial. 
		\item if $R = R^*$ and $R^*$   is neither $1$- nor $|E|$-trivial, then $P^*$ is $1$-trivial	and $P\neq P^*$.
	\end{enumerate}
\end{lemma}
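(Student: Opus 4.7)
The plan is to use one elementary set-theoretic observation. Since $\ovTheta_G$ is defined as the reflexive complement of $\Theta_G$, we have $\Theta_G \cap \ovTheta_G = \{(e,e)\mid e\in E\}$ and $\Theta_G \cup \ovTheta_G = E\times E$. Consequently, for any two \emph{distinct} edges $e,f\in E$, exactly one of $(e,f)\in R$ or $(e,f)\in P$ holds, for either choice of $\{R,P\}=\{\Theta_G,\ovTheta_G\}$. This dichotomy on pairs of distinct edges is the only structural input the proof needs.

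For Statement (1), I would argue as follows. Assume $R^*$ has at least two equivalence classes. Given any two edges $e,f$ lying in \emph{distinct} $R^*$-classes, we have $(e,f)\notin R^*$, hence $(e,f)\notin R$, and the dichotomy forces $(e,f)\in P\subseteq P^*$. For two edges $e',f'$ in the \emph{same} $R^*$-class, I pick an auxiliary edge $g$ from any other $R^*$-class (available by non-$1$-triviality of $R^*$); the previous case yields $(e',g),(g,f')\in P$, and transitivity of $P^*$ gives $(e',f')\in P^*$. Thus every pair of edges lies in $P^*$, so $P^*$ is $1$-trivial.

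For Statement (2), Statement (1) applied to the assumption that $R^*$ is not $1$-trivial immediately yields that $P^*$ is $1$-trivial. It remains to show $P\neq P^*$. Because $R=R^*$ is not $|E|$-trivial, some equivalence class of $R$ contains two distinct edges $e$ and $f$, so $(e,f)\in R$; the dichotomy then forces $(e,f)\notin P$. On the other hand, $1$-triviality of $P^*$ yields $(e,f)\in P^*$. Therefore $P\neq P^*$.

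The main obstacle is really only bookkeeping: one must remember that the dichotomy between $R$ and $P$ holds precisely on pairs of distinct edges (the diagonal lies in both), and one must invoke non-$1$-triviality of $R^*$ to guarantee the existence of a witness edge $g$ in a separate class. Once these two points are in place, both claims follow by a single application of transitivity together with the dichotomy.
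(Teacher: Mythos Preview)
Your proof is correct and follows essentially the same strategy as the paper's own argument. The paper phrases Statement~(1) via the graph representations $\mathscr{G}_R$ and $\mathscr{G}_P=\overline{\mathscr{G}}_R$, invoking the fact that the complement of a disconnected graph is connected, while you argue directly with the relations and equivalence classes; but the underlying step---bridging two edges in the same $R^*$-class through an auxiliary edge $g$ in a different class---is identical, and your treatment of Statement~(2) matches the paper's almost verbatim.
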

\begin{proof}
Let $\{R,P\} =\{\Theta_G, \ovTheta_G\}$. Note that the number of equivalence classes of $R^*$ and
$P^*$ is precisely the number of connected components of $\mathscr{G}_{R}$ and $\mathscr{G}_{P}$,
respectively. Since $R^*$ is not $1$-trivial, $\mathscr{G}_{R}$ has $k>1$ connected components
$C_1,\dots,C_k$. Moreover, the complement $\overline{\mathscr{G}}_{R}$ of $\mathscr{G}_{R}$
satisfies $\overline{\mathscr{G}}_{R} = \mathscr{G}_{P}$. Since for $i,j \in \{1, \ldots, k\}$
distinct, every vertex in $C_i$ is adjacent to every vertex in $C_j$ in $\overline{\mathscr{G}}_{R}$
it follows that $\mathscr{G}_{P}$ is connected. Thus, $P^*$ has one equivalence class and is,
therefore, $1$-trivial.

Assume now that $R = R^*$ and $R^*$ is neither $1$- nor $|E|$-trivial. By the arguments, above,
$P^*$ is $1$-trivial. Let $E_1,\dots,E_k$ be the equivalence classes of $R = R^* $ where $1<k<|E|$.
By definition, $(e,f)\in R$ if and only if $(e,f)\notin P$ for all distinct edges $e,f\in E$. In
particular, since $R$ is an equivalence relation, we have $(e,f)\in R$ for all $e,f\in E_i$ and,
furthermore, $(e,f)\notin R$ for all $e\in E_i$, $f\in E_j$, $i\neq j$. Hence, $P = \{(e,f)\mid e=f
\text{ or } e\in E_i, f\in E_j, i\neq j \}$. Note that $k<|E|$ implies that there is an $i \in \{1,
\ldots, k\}$ such that $E_i$ contains two distinct edges $e$ and $f$, and $1<k$ implies that there
is an equivalence class $E_j$ distinct from $E_i$ and thus, some $g\in E_j$ with $g\neq e,f$. By
definition, $(e,g),(f,g) \in P$ and thus, $(e,f) \in P^*$, but $(e,f)\notin P$. Therefore, $P\neq
P^*$. 
\end{proof}

The converse of Lemma~\ref{lem:Theta*1t}(1) is in general not satisfied, i.e., there
are connected graphs for which $\Theta_G^*$ and $\ovTheta_G^*$ are both $1$-trivial. To see this consider
$G=C_n$ for some odd $n\geq 5$. By Lemma \ref{obs:antipodal}, $\Theta^*$ is $1$-trivial. Moreover,
any two adjacent edges in $G$ induce always a path and are, therefore, in relation $\ovTheta^*$ by
Lemma~\ref{lem:path}. One easily observes that this implies that $\ovTheta^*$ is $1$-trivial. 	

\begin{lemma}\label{lem:trivial}
The following statements are equivalent for every graph $G$.
\begin{enumerate}
	\item $\Theta_G = \Theta^*_G$ and $\Theta_G^*$ is $1$-trivial  (resp., $|E|$-trivial)
	\item $\ovTheta_G = \ovTheta^*_G$ and $\ovTheta^*_G$ is $|E|$-trivial  (resp., $1$-trivial). 
	\item $G$ is a $K_2$ or $K_3$ (resp., a tree)
	\item For all distinct edges $e,f$ of $G$ it holds that $|\Delta_{ef}|=2$ (resp.,
	      $|\Delta_{ef}|\neq 2$ or equivalently $|\Delta_{ef}| = 3$) 
\end{enumerate}
\end{lemma}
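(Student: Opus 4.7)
The plan is to exploit Proposition \ref{pr:Delta-distinctEdges} as much as possible. Indeed, parts (3) and (4) of that proposition already deliver the chain $(1)\iff(3)\iff(4)$ in both the $1$-trivial and $|E|$-trivial versions of the statement: part (3) handles the line $G\simeq K_2$ or $K_3$ (with $|\Delta_{ef}|=2$), and part (4) handles the line $G$ is a tree (with $|\Delta_{ef}|\neq 2$, equivalently $|\Delta_{ef}|=3$). Thus the only genuinely new work is to insert condition (2) into each chain, which amounts to proving $(1)\iff(2)$.

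For $(1)\Rightarrow(2)$, I would observe that an equivalence relation $R$ on $E$ which is $1$-trivial must equal $E\times E$, and one which is $|E|$-trivial must equal the diagonal $\{(e,e)\mid e\in E\}$. Applying this to $\Theta_G=\Theta_G^*$, the hypothesis of $(1)$ in the $1$-trivial version forces $\Theta_G=E\times E$, so by definition of the reflexive complement $\ovTheta_G=\{(e,e)\mid e\in E\}$, which is trivially transitive and $|E|$-trivial. The $|E|$-trivial version is entirely symmetric: $\Theta_G=\{(e,e)\mid e\in E\}$ gives $\ovTheta_G=E\times E$, again transitive and $1$-trivial. The converse $(2)\Rightarrow(1)$ is handled in exactly the same way, with the roles of $\Theta_G$ and $\ovTheta_G$ swapped: from $\ovTheta_G=\ovTheta_G^*$ being $|E|$-trivial (resp.\ $1$-trivial), we read off $\ovTheta_G=\{(e,e)\mid e\in E\}$ (resp.\ $\ovTheta_G=E\times E$), and the reflexive-complement relation between $\Theta_G$ and $\ovTheta_G$ immediately gives $\Theta_G=E\times E$ (resp.\ the diagonal), which is transitive and $1$-trivial (resp.\ $|E|$-trivial).

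There is no real combinatorial obstacle; the proof is essentially a bookkeeping exercise that combines Proposition \ref{pr:Delta-distinctEdges} with the observation that transitivity plus extremal triviality collapses $\Theta_G$ to one of the two trivial relations on $E$, and taking reflexive complements interchanges these two extremes. The mildest care is needed only in stating the two parallel chains cleanly so that the reader can follow which version of ``(resp.)'' is being addressed at each implication.
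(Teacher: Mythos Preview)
Your proposal is correct and follows essentially the same strategy as the paper: both invoke Proposition~\ref{pr:Delta-distinctEdges} to obtain $(1)\iff(3)\iff(4)$ and then splice in condition~(2). The only organizational difference is that the paper closes the loop via $(3)\Rightarrow(2)\Rightarrow(1)$, using a short tree/shortest-path argument (Lemmas~\ref{lem:SP} and~\ref{lem:path}) for the tree case of $(3)\Rightarrow(2)$, whereas you prove $(1)\iff(2)$ directly by the purely relation-theoretic observation that a transitive $1$-trivial (resp.\ $|E|$-trivial) reflexive relation on $E$ must be $E\times E$ (resp.\ the diagonal), and that taking reflexive complements swaps these two extremes. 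Your route is slightly cleaner in that it avoids the graph-theoretic detour, but the paper already uses exactly your observation in its $(2)\Rightarrow(1)$ step, so the difference is cosmetic.
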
 
\begin{proof}
	Observe first that (1), (3) and (4) are equivalent by Proposition~\ref{pr:Delta-distinctEdges}.
	 Suppose that (3) holds. If $G$ is a $K_2$ or $K_3$ one easily verifies that $\ovTheta_G =
	 \ovTheta^*_G$ and $\ovTheta^*_G$ is $|E|$-trivial. Furthermore, if $G$ is a tree, then any two
	 distinct edges $e,f$ are located on some shortest path $P$ that contains $e$ and $f$. By Lemma
	 \ref{lem:SP}, $(e,f)\in \ovTheta_G$. In addition, $P$ must be induced and thus, by
	 Lemma~\ref{lem:path}, $(e,f)\in \ovTheta_G^*$, i.e., $\ovTheta_G = \ovTheta_G^*$ and
	 $\ovTheta^*_G$ is $|1|$-trivial. Hence, (3) implies (2). 
	
	Suppose that (2) holds. Assume first that $\ovTheta_G = \ovTheta^*_G$ and $\ovTheta^*_G$ is
	$1$-trivial. Since $\ovTheta_G$ is an equivalence relation with one equivalence class only, it
	follows that $(e,f)\in \ovTheta_G$ for all distinct edges $e$ and $f$ of $G$. Hence, $(e,f)\notin
	\Theta_G$ for all distinct edges $e$ and $f$ of $G$. Since $\Theta_G$ is reflexive, it follows that
	$\Theta_G = \{(e,e)\mid e\in E\}$. Hence, $\Theta_G = \Theta^*_G$ and $\Theta^*_G$ is
	$|E|$-trivial. Suppose that $\ovTheta_G = \ovTheta^*_G$ and $\ovTheta^*_G$ is $|E|$-trivial. It
	follows that $\ovTheta_G = \{(e,e)\mid e\in E\}$. Hence, $(e,f)\notin \ovTheta_G$ and thus,
	$(e,f)\in \Theta_G$ for all distinct edges $e$ and $f$ of $G$. Together with reflexivity of
	$\Theta_G$ this implies that $\Theta_G = \Theta^*_G$ and $\Theta^*_G$ is $1$-trivial. Hence (2)
	implies (1) which completes this proof.
\end{proof}

In case a graph contains non-adjacent edges, we can relax Condition (4) in Lemma \ref{lem:trivial} 
by considering non-adjacent edges only. 

\begin{lemma}\label{lem:trivial2} 
	The following two statements are equivalent for every connected graph $G$ that contains 
	at least two non-adjacent edges. 
	\begin{enumerate}
		\item $\ovTheta_G = \ovTheta_G^*$ and $\ovTheta_G$ is $1$-trivial. 
		\item $|\Delta_{ef}|= 3$ for all non-adjacent edges $e$ and $f$ of $G$. 
		\item $G$ is a tree. 
	\end{enumerate}
\end{lemma}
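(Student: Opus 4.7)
The plan is to establish the three-way equivalence among (1), (2), (3) (the statement says ``two'' but lists three conditions, which I read as a typo). The equivalence (1)$\iff$(3) is immediate from Lemma~\ref{lem:trivial}, which already contains the bi-implication ``$\ovTheta_G=\ovTheta_G^*$ and $\ovTheta_G^*$ is $1$-trivial $\iff$ $G$ is a tree''. The direction (3)$\Rightarrow$(2) is also free, since Proposition~\ref{pr:Delta-distinctEdges}(4) actually provides $|\Delta_{ef}|=3$ for \emph{all} distinct edges of a tree, in particular for non-adjacent ones. So the real work is the direction (2)$\Rightarrow$(3).

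For (2)$\Rightarrow$(3), I would argue by contraposition: assume $G$ is a connected graph with at least two non-adjacent edges that is not a tree, and exhibit a non-adjacent pair $e,f$ with $|\Delta_{ef}|\neq 3$. Since $G$ is not a tree but is connected, it contains a cycle $C$. I split the argument into two cases depending on whether $G$ contains a triangle.

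In the triangle-free case, pick any edge $e$ on $C$ and apply Lemma~\ref{lem:SP} to the walk $C-e$, which joins the endpoints of $e$ without containing $e$. This yields an edge $f\neq e$ in $C$ with $(e,f)\in\Theta_G$. If $e$ and $f$ shared a vertex, Lemma~\ref{lem:furtherBasics}(2) would place them in a $K_3$, contradicting triangle-freeness; hence $e,f$ are non-adjacent. Then Lemma~\ref{lem:theta-delta} delivers $|\Delta_{ef}|=2\neq 3$, as required.

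In the triangle case, fix a triangle on $\{a,b,c\}$. Because $G$ has at least two non-adjacent edges, $|V(G)|\geq 4$; because $G$ is connected, some vertex $d\notin\{a,b,c\}$ is adjacent to $\{a,b,c\}$, say $\{c,d\}\in E(G)$. Then $e=\{a,b\}$ and $f=\{c,d\}$ are non-adjacent, $d(a,c)=d(b,c)=1$, and the triangle inequality through $c$ forces $d(a,d),d(b,d)\in\{1,2\}$, so $\Delta_{ef}\subseteq\{1,2\}$ and $|\Delta_{ef}|\leq 2$. The principal obstacle is precisely this triangle case: the triangle's own edges are pairwise adjacent, so a direct cycle/$\Theta$ argument produces no non-adjacent witness, and one must combine connectedness with the hypothesis of at least two non-adjacent edges to extract a fourth vertex $d$ realising the required short-distance non-adjacent pair (this hypothesis is essential, as $K_3$ vacuously satisfies (2) but is not a tree).
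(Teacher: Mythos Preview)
Your proof is correct and follows essentially the same approach as the paper. The only difference is organizational: for the implication (2)$\Rightarrow$(3) you split into ``triangle-free'' versus ``contains a triangle'', while the paper splits into ``shortest induced cycle has length $\geq 4$'' versus ``$=3$''; these are logically the same case distinction and lead to the same arguments (the $\Theta$-edge on a cycle for the first case, and extending a $K_3$ by a fourth vertex for the second).
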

\begin{proof}
	If $\ovTheta_G = \ovTheta_G^*$ and $\ovTheta_G$ is $1$-trivial, then 
	By	Lemma \ref{lem:trivial}, $|\Delta_{ef}| = 3$ for all distinct and thus, in particular,
	for all non-adjacent edges $e$ and $f$. Thus, Condition (1) implies (2).  
	
	Assume now that $|\Delta_{ef}|= 3$ for all non-adjacent edges $e$ and $f$ of $G$. We continue
	with showing that $G$ is a tree. Since $G$ contains non-adjacent edges, $G\not \simeq K_3$ must
	hold. Assume, for contradiction, that $G$ contains cycles and, thus, in particular, induced
	cycles. Let $C$ be an induced cycle. Suppose first that that $|V(C)|\geq 4$ and let $e \in C$. By
	Lemma \ref{lem:SP}, there exists an edge $f$ in $C\setminus \{e\}$ such that $(e,f)\in \Theta$.
	Lemma \ref{lem:theta-delta} implies that $|\Delta_{ef}|=2$. Note that $e$ and $f$ must be
	adjacent since, otherwise, $|\Delta_{ef}|=3$ would hold by assumption. Hence, $e=\{x,u\}$ and
	$f=\{u,y\}$ for some pairwise distinct vertices $u,x,y\in V(G)$. Since $e$ and $f$ share a common
	vertex and $(e,f)\in \Theta$, Lemma \ref{lem:SP} implies that $e$ and $f$ must be contained in a
	$K_3$. Hence, $\{x,y\}\in E(G)$. Since $x$ and $y$ are located on $C$ and since $C$ contains more
	than three vertices, it follows that $C$ is not induced; a contradiction. Hence, $|V(C)| = 3$
	must hold, i.e., $C\simeq K_3$. Since $G\not\simeq K_3$ and since $G$ is connected, there is a
	vertex $v$ in $V(G)\setminus V(C)$ that is adjacent to at least one vertex in $V(C)$. If $v$ is
	adjacent to exactly one (resp., exactly two, three) vertices in $V(C)$, then $V(C)\cup \{v\}$
	induces a paw (resp., a diamond, $K_4$). In all of these cases, one easily observes that the
	subgraph induced by $V(C)\cup \{v\}$ contains two non-adjacent edge $e,f$ with $\Delta_{ef}=
	\{1,2\}$ or $\Delta_{ef} =\{1\}$ and thus, $|\Delta_{ef}|\neq 3$; a contradiction. Thus, $G$ does
	not contain any induced cycles and thus, no cycles at all. Since $G$ is connected it is,
	therefore, a tree and Condition (2) implies (3).
	
	Finally, if $G$ is a tree, we can apply
	Lemma \ref{lem:trivial} to conclude that $\ovTheta_G = \ovTheta_G^*$ and $\ovTheta_G$ is $1$-trivial. 
	Thus, Condition (3) implies (1) which completes this proof. 
\end{proof}

In Lemma \ref{lem:trivial}, $\ovTheta_G = \ovTheta^*_G$ and $\ovTheta^*_G$ being $1$-trivial is
equivalent to $(e,f)\in \ovTheta_G$ for all distinct edges $e$ and $f$ in $G$. The latter is equivalent to
$G$ being a tree. However, if we relax this condition by assuming that only non-adjacent edges $e$
and $f$ in $G$ satisfy $(e,f)\in \ovTheta_G$, then we obtain a simple and novel characterization of
block graphs.

\begin{proposition}\label{prop:blockgraph-Delta}
The following statements are equivalent for every connected graph $G=(V,E)$. 
\begin{enumerate}
	\item $(e,f)\in \ovTheta_G$ for all non-adjacent edges $e,f$ of $G$. 
	\item $G$ is a block graph.
\end{enumerate}
\end{proposition}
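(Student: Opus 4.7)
The plan is to prove the equivalence using the two characterizations of block graphs collected in Proposition~\ref{prop:block}: the four-point condition (ii) supplies the implication $(2)\Rightarrow(1)$, while the forbidden isometric subgraph characterization (iii) drives the converse.

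For $(2)\Rightarrow(1)$, assume $G$ is a block graph and let $e=\{a,b\}$, $f=\{x,y\}$ be non-adjacent edges, so $a,b,x,y$ are pairwise distinct. Set $S_0 = d(a,b)+d(x,y) = 2$, $S_1 = d(a,x)+d(b,y)$ and $S_2 = d(a,y)+d(b,x)$. Non-adjacency of $e$ and $f$ forces each of the distances making up $S_1$ and $S_2$ to be at least $1$, hence $S_1, S_2 \geq 2 = S_0$. The four-point condition says that the largest two of $\{S_0, S_1, S_2\}$ are equal. A short case distinction then yields $S_1 = S_2$: if the two equal largest values are $S_1$ and $S_2$ we are done directly, and if they involve $S_0$, then since both $S_1$ and $S_2$ are already $\geq S_0 = 2$, all three sums collapse to $2$. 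In either case $(e,f) \in \ovTheta_G$.

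For $(1)\Rightarrow(2)$, I would argue contrapositively. If $G$ is not a block graph, Proposition~\ref{prop:block}(iii) provides either an isometric cycle $C_n$ with $n\geq 4$ or an isometric diamond inside $G$. In both cases I aim to exhibit non-adjacent edges $e,f$ with $(e,f) \in \Theta_G$, contradicting the hypothesis that $\ovTheta_G$ contains every non-adjacent pair. For an isometric cycle, isometry ensures that distances computed within the cycle coincide with those in $G$, so the $\Theta$-related edges identified by Observation~\ref{obs:antipodal} remain $\Theta$-related in $G$: for $n=2k$ take antipodal edges (non-adjacent since $k\geq 2$), and for $n=2k+1$ with $k\geq 2$ take an edge $e$ together with one of its two antipodal edges (non-adjacent once the cycle length is at least $5$). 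For an isometric diamond on vertices $a,b,c,d$ with $\{c,d\}$ the missing edge, the pair $e=\{a,c\}$, $f=\{b,d\}$ is non-adjacent, and a direct computation $d(a,b)+d(c,d) = 1+2 = 3 \neq 2 = d(a,d)+d(b,c)$ shows $(e,f)\in \Theta_G$.

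The only substantial bookkeeping is the case split in the forward direction and, in the converse, verifying that the selected edges inside the isometric cycle are genuinely non-adjacent; the odd-cycle subcase is the most delicate since for $n=3$ the antipodal edges degenerate, but this is excluded by the assumption $n\geq 4$ from Proposition~\ref{prop:block}(iii). Once these details are in place, invoking isometry transports the local $\Theta$-computation into $G$ and the contradiction is immediate.
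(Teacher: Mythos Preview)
Your proof is correct and follows essentially the same route as the paper: the four-point condition of Proposition~\ref{prop:block}(ii) for $(2)\Rightarrow(1)$ with the observation that $S_0=2\leq S_1,S_2$, and the forbidden-isometric-subgraph characterization (iii) together with Observation~\ref{obs:antipodal} for the contrapositive of $(1)\Rightarrow(2)$. If anything, you are slightly more explicit than the paper in separating the even/odd cycle cases and in writing out the diamond computation, but the underlying argument is identical.
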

\begin{proof}
Suppose that $(e,f)\in \ovTheta_G$ for all non-adjacent edges $e,f$ of $G$. Clearly, $G$ cannot
contain an isometric diamond $H$ since then $H$ (and, therefore $G$) has two non-adjacent edges
$e,f$ with $(e,f)\in \Theta_G$ and, thus $(e,f)\notin \ovTheta_G$. Similarly, $G$ cannot contain an
isometric cycle since then antipodal edges $e,f$ of this cycle satisfy $(e,f)\in \Theta_G$ (cf.\
Obs.\ \ref{obs:antipodal}). By Prop.~\ref{prop:block}, $G$ is a block graph. 

Conversely, suppose that $G$ is a block graph, and let $e=\{x,y\}$ and $f=\{u,v\}$ be two
non-adjacent edges of $G$. By Prop.~\ref{prop:block}, the largest two of the sums $d(x,y)+d(u,v)$,
$d(x,u)+d(y,v)$ and $d(x,v)+d(y,u)$ are equal. Since $\{x,y\}$ and $\{u,v\}$ are edges of $G$, we
have $d(x,y)+d(u,v)=2$. Moreover, the vertices $x,y,u$ and $v$ are pairwise distinct, so both
$d(x,u)+d(y,v) \geq 2$ and $d(x,v)+d(y,u) \geq 2$ must hold. In particular, we have $d(x,y)+d(u,v)
\leq d(x,u)+d(y,v)$ and $d(x,y)+d(u,v) \leq d(x,v)+d(y,u)$. Hence, $d(x,u)+d(y,v)=d(x,v)+d(y,u)$
must hold. Thus,  $(e,f)\in \ovTheta_G$. 
\end{proof}

The results in Proposition~\ref{pr:Delta-distinctEdges} for the set $\Delta$ are mainly constrained
by the fact that the restrictions must, in particular, hold for adjacent edges as well. As
Prop.~\ref{prop:blockgraph-Delta} shows, considering non-adjacent edges instead helps to
characterize block graphs. In particular, $(e,f)\in \ovTheta_G$ for all non-adjacent edges $e,f$ of
$G$ holds in this case. We thus continue with considering properties of graphs based on non-adjacent
edges and characterize graphs for which $(e,f)\notin \ovTheta_G$ for all non-adjacent edges $e,f$ of
$G$. We start with the following simple result which characterizes graphs with diameter at most $2$
(as well as those with diameter at least $3$) in terms of the size of $\Delta$.

\begin{proposition}\label{prop:DN3-nonAdj}
	The following statements are equivalent for every connected graph $G$.
		\begin{enumerate}
		\item  $|\Delta_{ef}|\neq 3$ for all non-adjacent edges $e$ and $f$ of $G$ 
		\item  $\diam_G\leq 2$
		\item  $\Delta_{ef}\subseteq\{1,2\}$ for all non-adjacent edges $e$ and $f$ of $G$.
	\end{enumerate}
	Thus,  $\diam_G\geq 3$ if and only if there are non-adjacent edges $e$ and $f$ in $G$ with $|\Delta_{ef}| = 3$.
\end{proposition}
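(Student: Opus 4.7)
The plan is to prove the cyclic implications $(2) \Rightarrow (3) \Rightarrow (1) \Rightarrow (2)$; the final sentence is then just the contrapositive of $(1) \iff (2)$.

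For $(2) \Rightarrow (3)$, I would observe that if $e = \{a,b\}$ and $f = \{x,y\}$ are non-adjacent, then the four vertices $a,b,x,y$ are pairwise distinct, so each of the four distances in $\Delta_{ef}$ is at least $1$; the diameter bound gives the upper bound $2$, so $\Delta_{ef} \subseteq \{1,2\}$. The implication $(3) \Rightarrow (1)$ is immediate since $\Delta_{ef} \subseteq \{1,2\}$ forces $|\Delta_{ef}| \leq 2$.

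The only nontrivial step is $(1) \Rightarrow (2)$, which I would prove by contraposition: assuming $\diam_G \geq 3$, produce non-adjacent edges $e,f$ with $|\Delta_{ef}| = 3$. Pick vertices $u,v$ with $d(u,v) = 3$ and a shortest $uv$-path $u - a - b - v$. Set $e = \{u,a\}$ and $f = \{b,v\}$; since $u,a,b,v$ are pairwise distinct (as they lie on a shortest path), $e$ and $f$ are non-adjacent. Using the fact that all subpaths of a shortest path are shortest, I would compute directly that $d(u,b) = 2$, $d(a,v) = 2$, $d(u,v) = 3$, and $d(a,b) = 1$, so $\Delta_{ef} = \{1,2,3\}$ has size $3$.

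I do not expect any real obstacle here; the only mild subtlety is making sure in $(2) \Rightarrow (3)$ that the four vertices involved in two non-adjacent edges are truly distinct, so that every distance in $\Delta_{ef}$ is at least $1$ (which is what prevents $0$ from entering the set even when diameter is small). Finally, to deduce the closing sentence, I would simply note that $\neg(1)$ says precisely that there exist non-adjacent edges $e,f$ with $|\Delta_{ef}| = 3$, while $\neg(2)$ is $\diam_G \geq 3$, so the equivalence of $(1)$ and $(2)$ delivers the claim.
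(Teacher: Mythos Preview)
Your proposal is correct and follows essentially the same cyclic scheme as the paper: $(3)\Rightarrow(1)$ is immediate, $(2)\Rightarrow(3)$ uses that non-adjacent edges have four distinct endpoints so $0\notin\Delta_{ef}$, and $(1)\Rightarrow(2)$ is proved by contraposition via the first and last edge of a shortest path. The only cosmetic difference is that the paper picks vertices at distance $\geq 3$ and takes the two extremal edges of a shortest path of length $k-1\geq 3$ (obtaining $\Delta_{ef}=\{k-3,k-2,k-1\}$), whereas you fix $d(u,v)=3$; you might add one clause noting that such a pair exists (any shortest path witnessing $\diam_G\geq 3$ contains one), but otherwise the arguments coincide.
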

\begin{proof}
Clearly (3) implies (1) and we continue with showing that (1) implies (2) and that (2) implies (3).
Suppose that $|\Delta_{ef}|\neq 3$ for all non-adjacent edges $e$ and $f$ of $G$. Assume, for
contradiction, that $\diam_G\geq 3$ and thus, there are vertices $x$ and $y$ in $G$ such that
$d_G(x,y)\geq 3$. Consider a shortest $xy$-path $P = x_1-x_2-\ldots-x_k$ in $G$ where $x_1=x$ and
$x_k=y$. Since $d_G(x,y)\geq 3$, we have $k\geq 4$ and, in particular, $e\cap f=\emptyset$ where
$e=\{x_1,x_2\}$ and $f= \{x_{k-1},x_k\}$, i.e., $e$ and $f$ are non-adjacent. Moreover, it holds
that $d_G(x_1,x_{k})=k-1$, $d_G(x_1,x_{k-1})=d_G(x_2,x_k) = k-2$ and $d_G(x_2,x_{k-1})=k-3$. Hence
$\Delta_{ef} =\{k-3,k-2,k-1\}$ for some $k\geq 4$ and, thus $|\Delta_{ef}| = 3$; a contradiction. In
summary, (1) implies (2). Suppose now that $\diam_G\leq 2$. Hence, $d_G(x,y)\in \{0,1,2\}$ for all
vertices $x,y\in V(G)$. Let $e$ and $f$ be two non-adjacent edges in $G$. Thus, we have $0 \notin
\Delta_{ef}$ which implies that $\Delta_{ef} \subseteq \{1,2\}$ for all non-adjacent edges $e$
and $f$. Hence, (2) implies (3). Negation of (1) and (2) yields the last statement.
\end{proof}

\begin{proposition}\label{prop:nonAdj-notTheta}
	The following statements are equivalent for every connected graph $G$.
	\begin{enumerate}
			\item $(e,f)\not\in \ovTheta_G$ and thus, $(e,f)\in \Theta_G$ for all non-adjacent edges $e$ and $f$ of $G$.
	      \item $|\Delta_{ef} | =2$ for all non-adjacent edges $e,f$ of $G$ and $G$ is paw-free.
			\item $\Delta_{ef} = \{1,2\}$ for all non-adjacent edges $e$ and $f$ of $G$ and $G$ is paw-free.
	      \item $\diam_G \leq 2$ and $G$ is $\{K_4,2K_2,\text{paw}\}$-free.
		 \end{enumerate}
\end{proposition}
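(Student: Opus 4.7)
The plan is to prove the four-way equivalence via the cycle $(1) \Rightarrow (4) \Rightarrow (3) \Rightarrow (2) \Rightarrow (1)$, where $(3) \Rightarrow (2)$ is immediate from the inclusion $\{1,2\}\subseteq\{1,2\}$ of size $2$. Throughout, for non-adjacent edges $e = \{a,b\}$ and $f = \{x,y\}$ I write $S_1 \coloneqq d(a,x) + d(b,y)$ and $S_2 \coloneqq d(a,y) + d(b,x)$, so that $(e,f) \in \Theta_G$ iff $S_1 \neq S_2$, and I will exploit that for non-adjacent $e,f$ one has $\min \Delta_{ef} \geq 1$.

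For $(1) \Rightarrow (4)$ I would verify the four forbidden conditions in order. If $\diam_G \geq 3$, then a shortest path of length~$3$ provides two non-adjacent endpoint edges which, by Lemma~\ref{lem:SP}, cannot be in $\Theta_G$; hence $\diam_G \leq 2$. In an induced $K_4$, the two vertex-disjoint edges have all four cross-distances equal to~$1$, so $S_1 = S_2 = 2$. Using the diameter bound just established, an induced $2K_2$ has all cross-distances equal to~$2$, so $S_1 = S_2 = 4$. Finally, in an induced paw on $\{a,b,c,d\}$ with triangle $\{a,b,c\}$ and pendant $d$ attached at $a$, the only non-adjacent pair of edges is $\{a,d\}$ and $\{b,c\}$, and these satisfy $d(d,b) = d(d,c) = 2$ (via the paths through $a$), giving $S_1 = S_2 = 3$. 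Each configuration contradicts (1). For $(4) \Rightarrow (3)$, Proposition~\ref{prop:DN3-nonAdj} first yields $\Delta_{ef} \subseteq \{1,2\}$; the values $\{1\}$ and $\{2\}$ would respectively force $\{a,b,x,y\}$ to induce a $K_4$ or a $2K_2$, both excluded by (4), so $\Delta_{ef} = \{1,2\}$, while paw-freeness is inherited directly.

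The only substantive step is $(2) \Rightarrow (1)$, which I argue by contradiction. Suppose some non-adjacent pair $e, f$ satisfies $S_1 = S_2$. From $|\Delta_{ef}| = 2 \neq 3$ for all non-adjacent pairs, Proposition~\ref{prop:DN3-nonAdj} yields $\diam_G \leq 2$, and combined with $\min \Delta_{ef} \geq 1$ this forces $\Delta_{ef} = \{1,2\}$. Both $S_1, S_2$ then lie in $\{2,3,4\}$, and equality together with $\Delta_{ef} = \{1,2\}$ leaves only the possibility $S_1 = S_2 = 3$; equivalently, each of the unordered pairs $\{d(a,x), d(b,y)\}$ and $\{d(a,y), d(b,x)\}$ consists of one~$1$ and one~$2$. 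I expect the ensuing finite case analysis to be the main obstacle: up to relabelling there are two arrangements, and in one $\{a,x,y\}$ is a triangle with $b$ attached only at~$a$, while in the other $\{a,b,x\}$ is a triangle with $y$ attached only at~$x$. In either case $\{a,b,x,y\}$ induces a paw in $G$, contradicting the paw-freeness in~(2). The conceptual point is that the paw, rather than a larger forbidden substructure, is already enough to block the equality of sums, which is precisely why paw-freeness features in the characterization.
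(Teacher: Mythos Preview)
Your proof is correct and uses essentially the same toolkit as the paper---Proposition~\ref{prop:DN3-nonAdj} for the diameter bound and a finite case analysis on the subgraph induced by $e\cup f$---but runs the implication cycle in the opposite direction, $(1)\Rightarrow(4)\Rightarrow(3)\Rightarrow(2)\Rightarrow(1)$ rather than the paper's $(1)\Rightarrow(2)\Rightarrow(3)\Rightarrow(4)\Rightarrow(1)$. Consequently the substantive case analysis lands in your step $(2)\Rightarrow(1)$ (where $S_1=S_2=3$ forces an induced paw) instead of the paper's $(4)\Rightarrow(1)$ (where one counts the cross-edges between $\{x,y\}$ and $\{u,v\}$), and you trade the paper's appeal to Lemma~\ref{lem:theta-delta} for a direct use of Lemma~\ref{lem:SP}; beyond this reorganisation the two arguments are equivalent.
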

\begin{proof}
Let $G$ be a connected graph. Assume that Condition (1) holds. Since $(e,f)\not\in \ovTheta_G$ for
all non-adjacent edges $e$ and $f$ of $G$, $(e,f)\in \Theta_G$ must hold for all non-adjacent edges
$e$ and $f$ of $G$. Lemma \ref{lem:theta-delta} implies that $|\Delta_{ef} | =2$ for all
non-adjacent edges $e,f$. Moreover, $G$ cannot contain an induced paw, since a paw contains
non-adjacent edges $e=\{x,y\}$ and $f=\{a,b\}$ satisfying $d_G(x,a) + d_G(y,b) = 3 = d_G(x,b) +
d_G(y,a)$ and thus, $(e,f)\in \ovTheta_G$. Hence, Statement (1) implies (2). 
	
Assume now that $|\Delta_{ef} | =2$ for all non-adjacent edges $e,f$ of $G$ and that $G$ is
paw-free. By Prop.~\ref{prop:DN3-nonAdj}(3), $\Delta_{ef} = \{1,2\}$ holds for all non-adjacent edges
$e$ and $f$ of $G$. Thus, Statement (2) implies (3). 

Assume now that $\Delta_{ef} = \{1,2\}$ for all non-adjacent edges $e$ and $f$ of $G$ and that $G$
is paw-free. Clearly, $G$ cannot contain an induced $K_4$, since then there are non-adjacent edges
$e,f$ satisfying $\Delta_{ef} = \{1\}$. Moreover, $G$ cannot contain an induced 2$K_2$ since then
there are non-adjacent edges $e,f$ satisfying $1\notin \Delta_{ef}$. Hence, $G$ is
$\{K_4,2K_2,\text{paw}\}$-free. Note that $\Delta_{ef} = \{1,2\}$ implies $|\Delta_{ef}|\neq 3$ for
all non-adjacent edges $e$ and $f$ of $G$. Hence, we can apply Prop.~\ref{prop:DN3-nonAdj} 
to conclude that $\diam_G \leq 2$. In summary, Statement (3) implies (4). 

Assume now that $G$ satisfies the conditions in Statement (4), i.e., $\diam_G \leq 2$ and $G$ is
$\{K_4,2K_2,\text{paw}\}$-free. Suppose that $G$ contains non-adjacent edges $e=\{x,y\}$ and
$f=\{u,v\}$. Let $\ell$ denote the number of edges $\{a,b\}$ with $a\in \{x,y\}$ and $b\in \{u,v\}$.
Clearly, $\ell\in \{0,1,2,3,4\}$. However, $\ell=0$ and $\ell=4$ is not possible, since $G$ does not
contain induced 2$K_2$s and $K_4$s. Since $\ell\geq 1$ there is at least one edge between the
vertices in the sets $\{x,y\}$ and $\{u,v\}$. W.l.o.g., assume that the edge $g=\{x,u\}$ exists.
Consider first the case that $\ell=1$, i.e., $g$ is the only edge connecting vertices in $\{x,y\}$
with vertices $\{u,v\}$. Hence, $d_G(x,u)=1$ and $d_G(x,v), d_G(y,u), d_G(y,v) \neq 1$. Since
$\diam_G \leq 2$, we have $d_G(x,v) = d_G(y,u) = d_G(y,v) =2$. Hence, $d_G(x,u) + d_G(y,v) \neq
d_G(x,v) = d_G(y,u)$ and thus, $(e,f)\not\in \ovTheta_G$, If $\ell=2$, then paw-freeness of $G$
implies that the edge $\{y,v\}$ must exist. Hence, $x,y,u,v$ induce a cycle $C_4$ in which $e$ and
$f$ are antipodal edges. By Obs.\ \ref{obs:antipodal}, $(e,f)\in \Theta_G$ and thus, $(e,f)\not\in
\ovTheta_G$. Finally, assume that $\ell=3$. In this case, we can w.l.o.g.\ assume that the edges
$\{x,v\}$ and $\{y,v\}$ exists. Hence, $d_G(x,u) = d_G(x,v) = d_G(y,v) =1$ and
$d_G(y,u)=2$. Again, $d_G(x,u) + d_G(y,v) \neq d_G(x,v) = d_G(y,u)$ and thus, $(e,f)\not\in
\ovTheta_G$. In summary, $\diam_G \leq 2$ and $\{K_4,2K_2,\text{paw}\}$-freeness of $G$ imply that
$(e,f)\not\in \ovTheta_G$ for all non-adjacent edges $e$ and $f$ of $G$, i.e., Statement (4) implies
(1) which completes this proof.
\end{proof}

As the next result shows, it is, in some cases, enough to check the containment of two edges $e$ and
$f$ in some subgraph of $G$ to determine whether $(e,f) \in \ovTheta_G^*$ or $(e,f) \in \Theta_G^*$.

\begin{lemma}\label{lm:small}
Let $G$ be a graph and let $H$ be an induced subgraph of $G$. If $H$ is a paw, a $C_4$, a gem, a
$K_5$, an $S_n$ or an $X$-house, then $(e,f) \in \ovTheta_G^*$ for all edges $e,f$ of $H$. Moreover,
if $H$ is a diamond, a $K_n$ with $n\geq 3$ or a $K_{2,3}$ then $(e,f) \in \Theta_G^*$ for all edges
$e,f$ of $H$. 
\end{lemma}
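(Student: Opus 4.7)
The plan is to exploit two structural facts that apply uniformly to every graph $H$ on the list. First, each such $H$ has diameter at most two, so by Observation~\ref{obs:diam2iso}, $H$ is an isometric subgraph of $G$; hence every distance between two vertices of $H$ may be computed inside $H$ itself. Second, since $H$ is induced in $G$, two edges of $H$ lie in a common $K_3$ of $G$ if and only if they lie in a common $K_3$ of $H$. Together these mean that Lemma~\ref{lem:furtherBasics}(2) and Lemma~\ref{lem:path} can be applied by inspecting $H$ alone.

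For the $\ovTheta_G^*$ part, the tactic is to assemble enough $\ovTheta_G$-related pairs inside $H$ that transitivity yields a single equivalence class on $E(H)$. For $S_n$, any two distinct edges share the centre and the leaves are pairwise non-adjacent, so Lemma~\ref{lem:furtherBasics}(2) puts every pair directly in $\ovTheta_G$. For $C_4$, any three consecutive vertices form an induced $P_3$, so consecutive edges are linked via Lemma~\ref{lem:path}. For the paw on triangle $abc$ with pendant $\{c,d\}$, the induced paths $a-c-d$ and $b-c-d$ give $(\{a,c\},\{c,d\}),(\{b,c\},\{c,d\})\in \ovTheta_G$, and the direct check $d(a,c)+d(b,d)=1+2=3=d(a,d)+d(b,c)$ yields $(\{a,b\},\{c,d\})\in \ovTheta_G$. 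For the gem (a $P_4$ on $v_1v_2v_3v_4$ with apex $c$ adjacent to all four), the $P_4$ is induced; each path $v_i-c-v_j$ with $|i-j|\ge 2$ is induced in $H$ and links all apex-edges; and an explicit check $(\{v_1,v_2\},\{c,v_4\})\in \ovTheta_G$ joins the bottom edges to the apex edges. The X-house is handled in the same style by listing its induced $P_3$'s and performing one or two direct distance checks. For $K_5$, every non-adjacent pair of edges has all four distances equal to one, so $(e,f)\in \ovTheta_G$ directly; for any adjacent pair $e,f$ the two unused vertices still host an edge $g$, giving $(e,g),(f,g)\in \ovTheta_G$ and hence $(e,f)\in \ovTheta_G^*$.

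For the $\Theta_G^*$ part, the approach is dual. In $K_n$ with $n\ge 3$, every adjacent pair of edges lies in a triangle and so is in $\Theta_G$ by Lemma~\ref{lem:furtherBasics}(2); for $n=3$ this suffices, and for $n\ge 4$ any non-adjacent pair $\{a,b\},\{c,d\}$ is bridged through $\{a,c\}$. In the diamond $K_4-\{c,d\}$, the central edge $\{a,b\}$ forms a triangle with each of the four remaining edges, so all edges are $\Theta_G$-related to $\{a,b\}$; note that the two ``non-triangular'' adjacent pairs $\{a,c\},\{a,d\}$ and $\{b,c\},\{b,d\}$ actually lie in $\ovTheta_G$ rather than $\Theta_G$, which is precisely why the bridging through $\{a,b\}$ is essential, while the non-adjacent pairs $(\{a,c\},\{b,d\})$ and $(\{a,d\},\{b,c\})$ are verified to be in $\Theta_G$ by a direct distance computation. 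For $K_{2,3}$ with parts $\{a,b\},\{x,y,z\}$, a direct check puts every ``crossing'' non-adjacent pair such as $(\{a,x\},\{b,y\})$ in $\Theta_G$ (the sums $d(a,b)+d(x,y)=4$ and $d(a,y)+d(b,x)=2$ differ), and any adjacent pair such as $\{a,x\},\{a,y\}$ is bridged by $\{b,z\}$.

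The main obstacle is the combinatorial bookkeeping: in each graph one must check that the chosen $\ovTheta_G$- or $\Theta_G$-relations together with transitivity actually cover every edge pair. The delicate point in several of the cases (such as the apex-to-apex edges in the gem or the non-triangular adjacent pairs in the diamond) is that certain adjacent edges fall on the ``wrong'' side of Lemma~\ref{lem:furtherBasics}(2) and therefore require bridging through a suitably chosen intermediate edge. The initial isometric-subgraph reduction is what licenses performing all these local distance computations inside $H$ instead of $G$.
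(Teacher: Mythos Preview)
Your proof is correct and follows essentially the same approach as the paper: both arguments first use the diameter-$2$ observation (Obs.~\ref{obs:diam2iso}) to reduce from $G$ to the isometric subgraph $H$, and then verify case by case that the relevant relation graph on $E(H)$ is connected. The only presentational differences are that the paper invokes Lemma~\ref{lem:cut-edge} as a shortcut for $S_n$ and the paw (both contain a cut-edge), and defers the remaining $\ovTheta$- and $\Theta$-cases to explicit drawings of $\mathscr G_{\ovTheta_H}$ and $\mathscr G_{\Theta_H}$, whereas you spell out the bridging edges by hand.
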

\begin{proof}
For all of the considered subgraphs $H$ of $G$ it holds that $\diam_H\leq 2$. Since $H$ is induced,
Obs. \ref{obs:diam2iso} shows that $H$ is isometric. Hence, $(e,f) \in \ovTheta_G^*$ for all edges
$e,f$ of $H$ for which $(e,f) \in \ovTheta_H^*$ is satisfied. In other words, it suffices to show
$(e,f) \in \ovTheta_H^*$ for all edges $e,f$ of $H$, that is, $\ovTheta_H^*$ is $1$-trivial. If $H$
is an $S_n$ or a paw, then $H$ contains a cut-edge and Lemma \ref{lem:cut-edge} implies that
$\ovTheta_H^*$ is $1$-trivial. The respective graph representations of $\ovTheta_H$ of all other
stated graphs $H$ are shown in Fig.\ \ref{fig:Thbg}. Since all these graph representations are
connected graphs, it immediately follows that $\ovTheta_H^*$ is an equivalence relations that has
only one equivalence class and is, thus, $1$-trivial. 
	
If $H\cong K_{2,3}$ or if $H$ is a diamond then, by similar arguments and by referring to Fig.\
\ref{fig:Thgr}, $\Theta_H^*$ is $1$-trivial. Finally assume that $H\cong K_n$, for $n\geq 3$. Any
two distinct adjacent edges are in relation $\Theta_H$ since they are contained in a common $K_3$
and by Lemma \ref{lem:furtherBasics}. Suppose that $e=\{u,v\}$ and $f=\{x,y\}$ are non-adjacent
edges in $H$. Then $(\{u,v\},\{v,x\}),(\{v,x\},\{x,y\})\in \Theta_H$, which implies
$(\{u,v\},\{x,y\})\in \Theta_H^*$. Hence $(e,f)\in \Theta_H^*$ for all edges $e$ and $f$ and thus,
$\Theta_H^*$ is $1$-trivial.
\end{proof}

\begin{figure}[t]
\centering
\includegraphics[scale=0.7]{./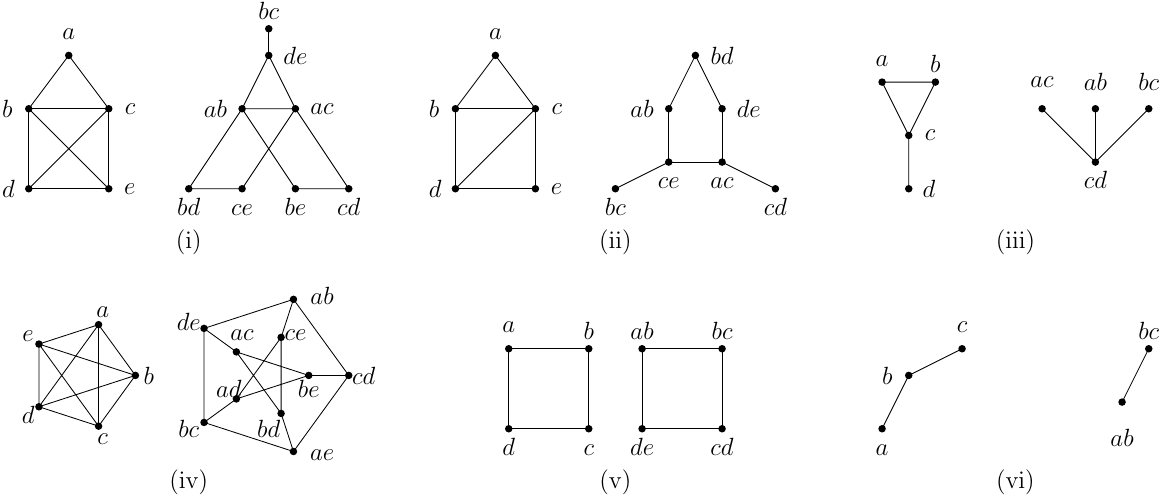}
\caption{In each of the Panels (i) to (vi), left a graph $G$ and right the graph representation
         $\mathscr G_{\ovTheta_G}$ of $\ovTheta_G$ is shown. In particular, the graph $G$ is an
         $X$-house in (i), a gem in (ii), a paw in (iii), a $K_5$ in (iv), a $C_4$ in (v), and an
         $S_2$ in (vi). In the graph representations of $\ovTheta_G$ we have used ``$xy$'' to denote
         the edge $\{x,y\}$. In all cases, $\mathscr G_{\ovTheta_G}$ is connected and, therefore,
         $\ovTheta_G^*$ is $1$-trivial.}
\label{fig:Thbg}
\end{figure}

\begin{figure}[t]
\centering
\includegraphics[scale=0.7]{./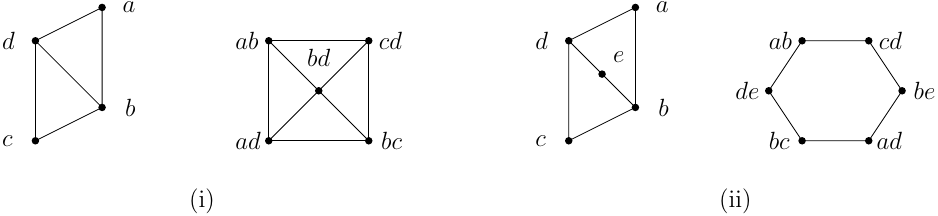}
\caption{In both Panels (i) and (ii), left a graph $G$ and right the graph representation $\mathscr
         G_{\Theta_G}$ of $\Theta_G$ is shown. In particular, the graph $G$ is a diamond in (i) and
         a $K_{2,3}$ in (ii). In the graph representations of $\Theta_G$ we have used ``$xy$'' to
         denote the edge $\{x,y\}$. In all cases, $\mathscr G_{\Theta_G}$ is connected and,
         therefore, $\Theta_G^*$ is $1$-trivial.}
\label{fig:Thgr}
\end{figure}

Somewhat surprisingly, $(e,f) \in \ovTheta_G^*$ holds for distinct edges $e,f$ in a graph $G$
whenever $e$ and $f$ are not contained in one of two small induced subgraphs on at most 4 vertices,
i.e., it is, in this case, not needed to compute distances of the vertices in $e$ and $f$ to determine
as whether $e$ and $f$ are in relation $\ovTheta^*$.

\begin{lemma}\label{lem:caseC}
Let $G$ be a graph and let $e,f$ be two distinct edges of $G$. Then it holds that $(e,f) \in
\ovTheta_G^*$ whenever $e$ and $f$ are not contained in a common $K_3$ and in a common induced
diamond in $G$.
\end{lemma}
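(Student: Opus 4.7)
I would prove Lemma \ref{lem:caseC} by strong induction on the edge distance $d(e,f) = \min\{d(v,w) : v \in e, w \in f\}$, resolving base cases by inspecting the induced subgraph $G[e \cup f]$ on at most four vertices, and handling the inductive step by chaining through a well-chosen intermediate edge on a shortest path between $e$ and $f$.

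For the base cases, if $e$ and $f$ lie in different connected components then all entries of $\Delta_{ef}$ equal $\infty$ and $(e,f) \in \ovTheta$ trivially. If $d(e,f) = 0$ then $e$ and $f$ share a vertex, and the hypothesis that they are in no common $K_3$ forces $G[e \cup f] \simeq P_3$, so Lemma \ref{lem:furtherBasics}(2) gives $(e,f) \in \ovTheta$. If $d(e,f) = 1$, then $e$ and $f$ are disjoint with $\ell \in \{1,2,3,4\}$ edges between them, and $G[e \cup f]$ is a $P_4$, a paw, a $C_4$, a diamond, or a $K_4$, respectively; the diamond case ($\ell = 3$) is excluded by hypothesis, and the remaining four cases are settled by Lemma \ref{lem:path} (for $P_4$), Lemma \ref{lm:small} (for paw and $C_4$), and Observation \ref{obs:diam2iso} applied to $K_4$, whose diameter $1$ forces all four cross-distances to equal $1$ and hence $(e,f) \in \ovTheta$ directly.

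The inductive step treats $d(e,f) = k \geq 2$, so $G[e \cup f] \simeq 2K_2$. I would assume without loss of generality that $d(b,x) = k$ is the minimum cross-distance, fix a shortest $bx$-path $P\colon b = u_0, u_1, \ldots, u_{k-1}, u_k = x$, and first note that minimality of $k$ rules out nearly all potential chords (for example $\{a,u_i\}, \{y,u_i\} \notin E$ for $2 \le i \le k-1$, and $\{b,u_i\}, \{x,u_j\} \notin E$ for $i \ge 2$, $j \le k-2$). The central case split is on whether $\{u_{k-1}, y\} \in E$. If not, I would set $g := \{u_{k-1}, x\}$: then $g$ and $f$ share $x$ but are not in a common $K_3$, so $(g,f) \in \ovTheta$ by Lemma \ref{lem:furtherBasics}, while $G[e \cup g]$ induces a $2K_2$ when $k \geq 3$ (so the induction hypothesis applies at the smaller distance $k-1$, with both hypotheses preserved) or a $P_4$/paw when $k=2$ (handled by Lemma \ref{lem:path} or Lemma \ref{lm:small}). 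If instead $\{u_{k-1}, y\} \in E$, I would switch to $g'' := \{u_{k-2}, u_{k-1}\}$: the subgraph $G[g'' \cup f]$ is then a paw (triangle $\{u_{k-1}, x, y\}$ with pendant edge $\{u_{k-2}, u_{k-1}\}$), so Lemma \ref{lm:small} yields $(g'', f) \in \ovTheta^*$, while $G[e \cup g'']$ is again a $2K_2$ for $k \geq 4$ (apply induction) or a $P_4$/paw for $k=3$.

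The main obstacle is the two triangle-chords $\{a, u_1\}$ and $\{u_{k-1}, y\}$, each of which creates a $K_3$ (namely $\{a,b,u_1\}$ and $\{u_{k-1},x,y\}$) pulling an adjacent edge-pair into $\Theta$ and disrupting the naive chain. The most delicate sub-case is $k = 2$ with both chords simultaneously present: there $u_1$ is a common neighbor of all of $a, b, x, y$, so the routes through $u_1$ bound every cross-distance above by $2$ while the $2K_2$ hypothesis bounds each below by $2$; hence all four cross-distances equal exactly $2$, and $(e,f) \in \ovTheta$ follows immediately from the definition. In every other configuration, the intermediate-edge construction above succeeds, completing the induction.
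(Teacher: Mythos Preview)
Your approach is essentially the paper's: both take a shortest path between the two edges, use Lemma~\ref{lem:path} along that path, and invoke Lemma~\ref{lm:small} (specifically the paw case) to deal with the possible triangle-chords $\{a,u_1\}$ and $\{u_{k-1},y\}$ at the two ends. The organizational difference is that you frame the argument as strong induction on $d(e,f)$, peeling off one step at a time through an intermediate edge $g$ or $g''$, whereas the paper gives a direct argument treating both endpoints symmetrically and splitting into three cases according to which subset of the two chords is present. Your direct computation for $k=2$ with both chords present (forcing all four cross-distances to equal $2$) is a nice shortcut; the paper instead locates two induced paws in that situation.

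One small expository gap: in your Case~B you explicitly cover $k\geq 3$ via $g''=\{u_{k-2},u_{k-1}\}$ and then, separately, $k=2$ with \emph{both} chords present, but you never spell out $k=2$ with only the chord $\{u_1,y\}$ and not $\{a,u_1\}$. There $g''=\{b,u_1\}$ is adjacent to $e$, the three vertices $a,b,u_1$ induce a $P_3$, and $(e,g'')\in\ovTheta$ by Lemma~\ref{lem:furtherBasics}(2); your paw argument for $(g'',f)$ still goes through. This is trivial to add, and with it your proof is complete. (Also, your stated exclusion range ``$\{y,u_i\}\notin E$ for $2\le i\le k-1$'' should read $i\le k-2$, as you yourself use when splitting on $\{u_{k-1},y\}$.)
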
 
\begin{proof}
Put $\ovTheta\coloneqq \ovTheta_G$. Let $e=\{x,y\}$ and $f=\{u,v\}$ be two distinct edges of $G$
that are neither contained in a common $K_3$ nor in a common induced diamond. If $e$ and $f$ are
contained in distinct connected components of $G$, then trivially $(e,f)\in \ovTheta\subseteq
\ovTheta^*$. Hence, suppose that $e$ and $f$ are contained in the same connected component of $G$.
Without loss of generality, we may assume that $d(x,u)=k\coloneqq \min
\{d(x,u),d(x,v),d(y,u),d(y,v)\}$. While $x=u$ may hold, minimality of $d(x,u)$ and $e\neq f$ implies
that $y \neq v$ always holds. Suppose first that $k=0$, that is, $x=u$. Since $e$ and $f$ are not
edges of an induced $K_3$ of $G$, the vertices $x,y,v$ induce a shortest $yv$-path. This together
with Lemma \ref{lem:SP} implies that $(e,f) \in \ovTheta \subseteq \ovTheta^*$. 

Suppose now that $k \geq 1$. In particular, we have $x \neq u$, that is, the vertices $x,y,u$ and
$v$ are pairwise distinct. Let $P$ be a shortest path in $G$ between $x$ and $u$ and let $x'$,
resp., $u'$ be the vertex adjacent to $x$, resp., $u$ in $P$. Put $e_x=\{x,x'\}$ and $f_u=\{u,u'\}$.
Note that if $k=1$, we have $x=u'\neq x'=u$ and if $k=2$, we have $x'=u'$. If otherwise, $k \geq 3$,
then $x,x',u',u$ are pairwise distinct. Since $P$ is a shortest path between $x$ and $u$ in $G$, $P$
is an induced path of $G$. Hence, by Lemma~\ref{lem:path}, we have $(e',f') \in \ovTheta^*$ for all
edges $e',f'$ of $P$. In particular, $(e_x,f_u) \in \ovTheta^*$. Note that there cannot be a vertex
$z \in V(P) \setminus \{x,x'\}$ such that $\{y,z\}$ is an edge of $G$ since then $d(y,u) < d(x,u)$;
a contradiction to the minimality of $d(x,u)$. Similarly, there is no vertex $z \in V(P) \setminus
\{u,u'\}$ such that $\{v,z\}$ is an edge of $G$. Now, three cases may occur for the
adjacency-relation between $y$ and $x'$ together with $v$ and $u'$

\begin{owndesc}
\item[\textnormal{\em Case (1): None of $\{y,x'\}$ and $\{u',v\}$ are edges of $G$.}] 
	In this case, both $e$ and $e_x$  as well as $f$ and $f_u$ form an induced 
	$P_3$ and Lemma~\ref{lem:path} implies that $(e,e_x) \in \ovTheta^*$
	and $(f,f_u) \in \ovTheta^*$. As argued above, we have $(e_x,f_u) \in \ovTheta^*$.
	Taken the latter two arguments together, we have $(e,f) \in \ovTheta^*$.

\item[\textnormal{\em Case (2): Exactly one of $\{y,x'\}$ or $\{u',v\}$ is an edge of $G$.}] 
	We may assume w.l.o.g.\ that $\{y,x'\}\in E(G)$. Suppose first that $k=1$.
	Hence, $x'=u$ and $u'=x$ and, in particular, $G$ contains the edges $e, f,
	\{x,u\}$ and $\{y,u\}$. By assumption, $\{v,x\}$ is not an edge of $G$. Hence,
	$x,u,y,v$ induce either a paw or a diamond. Since by assumption, $e$ and $f$ are not contained
	in a common induced diamond in $G$, these four vertices induce a paw. 
	By Lemma~\ref{lm:small}, $(e,f) \in \ovTheta^*$ follows.
				
	Assume now that $k>1$. Hence, $x' \neq u$. As in Case (1), we have $(f,f_u) \in \ovTheta^*$.
	Moreover, for $x''$ the neighbor of $x'$ in $P$ distinct from $x$ (which must exist since $x'
	\neq u$), we have $(\{x',x''\},f_u) \in \ovTheta^*$, since $(e',f') \in \ovTheta^*$ holds for all
	edges $e',f'$ of $P$. Moreover, since neither $\{x,x''\}$ nor $\{y,x''\}$ are edges in $G$,
	$x,y,x'x''$ must induce a paw and, therefore, $(\{x',x''\},e) \in \ovTheta^*$ holds by
	Lemma~\ref{lm:small}. Since $(\{x',x''\},f_u) \in \ovTheta^*$ it follows that $(e,f_u)\in
	\ovTheta^*$ which together with $(f,f_u) \in \ovTheta^*$ implies that $(e,f)\in \ovTheta^*$. 

\item[\textnormal{\em Case (3): Both $\{y,x'\}$ and $\{u',v\}$ are edges in $G$.}] 
	Assume first that $k=1$. Hence, $x'=u$ and $u'=x$ and one easily observes that $x,y,u,v$ induce
	either a $K_4$ or a diamond. Since $e$ and $f$ cannot be contained in a common induced diamond,
	it follows that $x,y,u,v$ induce a $K_4$. Hence, $\Delta_{ef}=\{1\}$ and contraposition of Lemma
	\ref{lem:theta-delta} implies that $(e,f) \in \ovTheta \subseteq \ovTheta^*$.

	Assume now that $k>1$. By similar arguments as in Case (2), 
	$x,x',x'',y$ as well as $y,u,u',u''$ induced a paw and we have 
	$(\{x',x''\},e) \in \ovTheta^*$ and $(\{u',u''\},f) \in \ovTheta^*$ by Lemma~\ref{lm:small}. 
	Since $(e',f') \in \ovTheta^*$ holds for all edges $e',f'$ of $P$, it holds in particular 
	 $(\{x',x''\}, \{u',u''\})\in \ovTheta^*$. 
	Taken the latter arguments together, $(e,f) \in \ovTheta^*$.
\end{owndesc}
In summary, in all of the three cases we have $(e,f) \in \ovTheta^*$ which completes the proof.
\end{proof}

Clearly, the converse of Lemma \ref{lem:caseC} is not always
satisfied. As an example consider an induced paw in $G$ that contains 
edges $e$ and $f$ that are located in common $K_3$ and satisfy, by Lemma \ref{lm:small},
$(e,f)\in \ovTheta^*$. 

\begin{proposition}\label{prop_K3-free}
For every $K_3$-free graph $G$ it holds that $\ovTheta^*_G$ is $1$-trivial.
\end{proposition}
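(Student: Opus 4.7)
The plan is to note that the heavy lifting has already been done in Lemma~\ref{lem:caseC}, and the statement follows essentially by inspection. Specifically, I want to argue that if $G$ is $K_3$-free, then the hypothesis of Lemma~\ref{lem:caseC} is automatically satisfied for every pair of distinct edges.

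First I would observe that, since $G$ is $K_3$-free, no two distinct edges of $G$ can lie in a common $K_3$, simply because $G$ contains no $K_3$ at all. Next I would observe that a diamond (i.e.\ $K_{1,1,2}$, cf.\ Fig.~\ref{fig:graphs}) contains induced triangles; hence no induced subgraph of a $K_3$-free graph can be a diamond, and in particular no two distinct edges of $G$ can lie in a common induced diamond. Combining these two observations, the hypothesis of Lemma~\ref{lem:caseC} is satisfied for every pair of distinct edges $e,f$ of $G$, and Lemma~\ref{lem:caseC} therefore yields $(e,f)\in \ovTheta_G^*$.

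Since $\ovTheta_G^*$ is reflexive and $(e,f) \in \ovTheta_G^*$ for all distinct edges $e,f$, it follows that all edges of $G$ lie in a single equivalence class of $\ovTheta_G^*$, i.e.\ $\ovTheta_G^*$ is $1$-trivial (under the usual convention that $E(G)\neq \emptyset$; otherwise the statement is vacuous).

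The only mild subtlety, and in some sense the ``obstacle,'' is to notice that the diamond-freeness part of the hypothesis of Lemma~\ref{lem:caseC} comes for free under $K_3$-freeness, since a diamond necessarily contains a $K_3$ as an induced subgraph. Once this is pointed out, the proof is essentially a one-line invocation of Lemma~\ref{lem:caseC}.
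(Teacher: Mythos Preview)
Your proposal is correct and follows essentially the same approach as the paper: observe that $K_3$-freeness implies diamond-freeness (since a diamond contains an induced $K_3$), so the hypothesis of Lemma~\ref{lem:caseC} is vacuously satisfied for every pair of distinct edges, yielding $(e,f)\in\ovTheta_G^*$ for all $e,f$ and hence $1$-triviality. The paper's proof is the same one-line invocation of Lemma~\ref{lem:caseC}.
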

\begin{proof}
If $G$ is $K_3$-free, it is, in particular, diamond-free. Hence, none of the edges of $G$ are
contained in a common $K_3$ or diamond. By Lemma \ref{lem:caseC}, $(e,f)\in \ovTheta^*_G$ for all
edges $e,f$ in $G$. Therefore, $\ovTheta^*_G$ is $1$-trivial.
\end{proof}

The next result strengthens Prop.~\ref{prop_K3-free} and provides a characterization of graphs $G$
with $1$-trivial $\ovTheta_G^*$ in terms of small forbidden and enforced subgraphs.

\begin{proposition}\label{prop:ovTheta*-trivial}
The following statements are equivalent for every connected graph $G$.
\begin{enumerate}
\item  $\ovTheta_G^*$ is $1$-trivial.
\item (I) $|V(G)| \leq 4$ and $G$ is not a $K_3$,  not a diamond and not a $K_4$

      (II) $|V(G)| \geq 5$ and $G$ is $K_3$-free or contains at least one the induced subgraphs: a $K_5$, a paw, a $P_4$
\end{enumerate}
\end{proposition}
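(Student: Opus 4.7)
The plan is to prove the equivalence by handling the two directions separately. I tackle the implication $(2)\Rightarrow(1)$ first, since it rests on combinations of earlier results (Lemmas~\ref{lem:trivial}, \ref{lm:small}, \ref{lem:caseC}, and Proposition~\ref{prop_K3-free}), and then establish $(1)\Rightarrow(2)$ by contraposition.

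For the direction $(2)\Rightarrow(1)$, assume (I) or (II). If (I) holds, then $G$ is one of the connected graphs on at most four vertices other than $K_3$, the diamond, and $K_4$: namely $K_1, K_2, P_3, P_4, K_{1,3}$ (all trees, for which $\ovTheta_G^*$ is $1$-trivial by Lemma~\ref{lem:trivial}), together with $C_4$ and the paw (both explicitly shown in Fig.~\ref{fig:Thbg} or handled by Lemma~\ref{lm:small}). If (II) holds and $G$ is $K_3$-free, Proposition~\ref{prop_K3-free} finishes the argument. Otherwise (II) holds and $G$ contains an induced $H\in\{K_5,\text{paw},P_4\}$. By Lemma~\ref{lm:small}, all edges of $H$ lie in a single $\ovTheta_G^*$-class $C$, and it remains to show that every other edge $f\in E(G)\setminus E(H)$ is $\ovTheta_G^*$-related to some edge of $H$. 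By Lemma~\ref{lem:caseC}, this succeeds whenever at least one $e\in E(H)$ fails to share with $f$ both a common $K_3$ and a common induced diamond; I would verify this by case analysis on the choice of $H$ and on how $f$ attaches to $V(H)$.

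For the direction $(1)\Rightarrow(2)$, I contrapose and assume neither (I) nor (II) holds, aiming to show $\ovTheta_G^*$ is not $1$-trivial. Two sub-cases arise. If $|V(G)|\leq 4$ and $G\in\{K_3,\text{diamond},K_4\}$, a direct check handles each: in $K_3$, every pair of distinct edges lies in a common triangle and is thus in $\Theta$, so $\ovTheta$ has three singleton classes; for the diamond, Lemma~\ref{lm:small} gives $\Theta_G^*$ $1$-trivial, so by Lemma~\ref{lem:Theta*1t}(2) the relation $\ovTheta_G^*$ is $|E|$-trivial; for $K_4$ one reads off that $\ovTheta$ consists of the reflexive pairs together with the three perfect matchings, giving exactly three $\ovTheta_G^*$-classes. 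If instead $|V(G)|\geq 5$ and $G$ is simultaneously $K_3$-containing and $\{K_5,\text{paw},P_4\}$-free, then paw-freeness plus the presence of a $K_3$ together with Theorem~\ref{thm:paw-free} force $G$ to be complete multipartite. The $K_3$-containment requires at least three parts, $K_5$-freeness at most four, so $G\simeq K_{n_1,\dots,n_\ell}$ with $\ell\in\{3,4\}$ and $|V(G)|\geq 5$. Since such $G$ has diameter $2$, a direct computation using the sum-test on $\Delta$ shows that two edges are in $\ovTheta_G$ precisely when the associated unordered pairs of part-indices either coincide or are disjoint, from which one reads off that $\ovTheta_G^*$ has at least two (and in fact exactly three) equivalence classes.

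The principal obstacle is the sub-case of the backward direction in which $G$ contains an induced $H\in\{P_4,\text{paw},K_5\}$ as a proper subgraph: one must show, by careful case analysis, that for every external edge $f$ at least one edge $e\in E(H)$ escapes the twofold obstruction of Lemma~\ref{lem:caseC} with respect to $f$. A secondary difficulty appears in the complete-multipartite case of the forward direction, where one must describe the $\ovTheta_G^*$-classes of $K_{n_1,\dots,n_\ell}$ with $\ell\in\{3,4\}$ by the same-or-disjoint equivalence on unordered pairs of part-indices, requiring both the construction of $\ovTheta$-chains within a class (via edges sharing a vertex) and the verification that no such chain crosses between classes.
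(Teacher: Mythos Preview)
Your overall plan for $(2)\Rightarrow(1)$ matches the paper's approach: it, too, handles the small graphs directly, invokes Proposition~\ref{prop_K3-free} for the $K_3$-free case, and then runs a lengthy case analysis on how an external edge $f$ attaches to the chosen induced $H\in\{K_5,\text{paw},P_4\}$, using Lemma~\ref{lem:caseC} and Lemma~\ref{lm:small} throughout. You correctly identify this as the labour-intensive part.

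Your route for $(1)\Rightarrow(2)$ in the case $|V(G)|\geq 5$ is genuinely different from the paper's and considerably cleaner. The paper argues by contradiction via a minimal $\ovTheta$-chain between two edges of a $K_3$, establishing a technical ``Claim~X'' about the local structure around that chain and eventually deriving a shorter chain. You instead observe that a connected graph containing a $K_3$ but no induced paw is complete multipartite (Theorem~\ref{thm:paw-free}), bound the number of parts by $K_3$- and $K_5$-constraints, and then compute $\ovTheta^*$ directly on $K_{n_1,\dots,n_\ell}$ with $\ell\in\{3,4\}$. This is exactly what the paper does \emph{after} Proposition~\ref{prop:ovTheta*-trivial}, in the proofs of Theorem~\ref{thm:ovTheta*-non-1trivial} and Theorem~\ref{thm:structure-ovTheta}; your reorganisation folds that reasoning in earlier and avoids the minimal-chain argument entirely. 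One caution: your sentence ``two edges are in $\ovTheta_G$ precisely when the associated unordered pairs of part-indices either coincide or are disjoint'' is false as stated for $\ovTheta_G$ (two non-adjacent edges with the same index pair form an induced $C_4$ and are antipodal, hence in $\Theta_G$); it is the correct description of $\ovTheta_G^*$, which is what you need.

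There is one genuine slip in the small-graph analysis. For the diamond you invoke Lemma~\ref{lem:Theta*1t}(2) to conclude that $\ovTheta_G^*$ is $|E|$-trivial, but that lemma's hypothesis requires $\Theta_G=\Theta_G^*$ to be \emph{neither} $1$- nor $|E|$-trivial, whereas here $\Theta_G^*$ is $1$-trivial; the lemma simply does not apply. In fact $\ovTheta_G^*$ on the diamond has exactly three classes, not five. The fix is easy---a direct inspection of the five edges shows the three classes---and your desired conclusion (not $1$-trivial) still holds, but the argument as written is invalid.
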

\begin{proof}
In what follows, put $\ovTheta\coloneqq \ovTheta_G$ and let $G=(V,E)$. We start with the assumption
that $|V(G)|\leq 4$. By contraposition, if $G$ is a $K_3$, then none of any two distinct edges are
in relation $\ovTheta$ (cf.\ Lemma \ref{lem:furtherBasics}) and hence $\ovTheta^*$ has three
equivalence classes. If $G$ is a diamond (resp., a $K_4$), then it is easy to see that $\ovTheta^*$
consists of three equivalence classes. In all cases $\ovTheta^*$ is not $1$-trivial. For the
converse, suppose that $G$ is not a $K_3$, not a diamond and not a $K_4$. Clearly, if $|V(G)|\in
\{1,2\}$, $\ovTheta^*$ is $1$-trivial. Moreover, if $|V(G)| = 3$, then $G\simeq P_3$ and
$\ovTheta^*$ is $1$-trivial. If $|V(G)| = 4$, then $G$ must be a $P_4$, a $C_4$, an $S_3$ or a paw.
In the latter three cases, Lemma \ref{lm:small} implies that $\ovTheta^*$ is $1$-trivial. If
$G\simeq P_4$, then Lemma \ref{lem:path} implies that $\ovTheta^*$ is $1$-trivial. In summary,
Statement (1) and (2) are equivalent whenever $|V(G)|\leq 4$. 

Hence, assume in the following that $|V|\geq 5$. We show first that (2.II) implies (1). If $G$ is
$K_3$-free, then $G$ is also diamond-free. This together with Lemma~\ref{lem:caseC} implies that
$(e,f) \in \ovTheta^*$ for all distinct edges $e$ and $f$ of $G$. 

Assume now that $H\simeq K_5$ is a subgraph of $G$. By Lemma \ref{lm:small}, $(e,f) \in \ovTheta^*$
for all edges $e,f$ of $H$. Hence, if $|V|=5$, then we are done. Suppose that $|V|>5$. In this case,
there is an edge $e=\{x,y\}$ that is not contained in $H$. Moreover, $H$ contains an edge
$f=\{u,v\}$ that is not adjacent to $e$. We next show that $(e,f) \in \ovTheta^*$ holds. Since $e$
and $f$ are non-adjacent, $e$ and $f$ are not located in a common $K_3$. This together with
Lemma~\ref{lem:caseC} implies that, if there is no induced diamond of $G$ that contains both $e$ and
$f$, then $(e,f) \in \ovTheta^*$ holds as desired. Hence, suppose that $e$ and $f$ are contained in
a common induced diamond $H'$. Since $e$ and $f$ are non-adjacent, we can assume w.l.o.g.\ that $H'$
has edges $\{x,y\}$, $\{u,v\}$, $\{x,u\}$, $\{y,u\}$ and $\{y,v\}$. Now, pick a vertex $w$ in $V(H)$
distinct from $u$ and $v$. If neither $\{x,w\}$ nor $\{y,w\}$ are edges of $G$, then $\{x,y,u,v,w\}$
induces a gem in $G$. If $\{y,w\}$ is an edge of $G$ and $\{x,w\}$ is not, then, $\{x,y,u,v,w\}$
induces a $X$-house in $G$. In both these cases, $(e,f) \in \ovTheta^*$ follows from
Lemma~\ref{lm:small}. If $\{x,w\}$ is an edge of $G$ and $\{y,w\}$ is not, then $\{x,y,v,w\}$
induces a $C_4$ in $G$. By Lemma \ref{lm:small}, $(e,\{v,w\}) \in \ovTheta^*$. Since $\{v,w\}$ and
$f$ are edges of $H$, it holds that $(\{v,w\},f) \in \ovTheta^*$ and, therefore, $(e,f) \in
\ovTheta^*$. Finally, if both $\{x,w\}$ and $\{y,w\}$ are edges of $G$, then $\{x,y,u,w\}$ induces a
$K_4$ in $G$ and $\Delta_{e\{u,w\}}=\{1\}$ holds. Thus, $(e,\{u,w\}) \in \ovTheta\subseteq
\ovTheta^*$. Again, since $\{u,w\}$ and $f$ are edges of $H$, we have $(\{u,w\},f) \in \ovTheta^*$
and, therefore, $(e,f) \in \ovTheta^*$.

Assume now that $G$ contains a paw or $P_4$, call it $H$, induced by $a,b,c,d \in V$. Independent of
whether $H$ is a paw or a $P_4$, $H$ contains a vertex that has degree $1$ in $H$. We may assume
w.l.o.g.\ that $a$ has degree $1$ in $H$, and that $\{a,b\}$ and $\{b,c\}$ are edges of $H$. Note
that the latter uniquely determines the remaining edges of $H$. If $H$ is a paw (resp., a $P_4$),
then Lemma~\ref{lm:small} (resp., Lemma~\ref{lem:path}) imply that $(e',f') \in \ovTheta^*$ for all
edges $e',f'$ of $H$. Now, let $e=\{x,y\}$ be an edge of $G$ that is not an edge of $H$, and put
$f\coloneqq\{a,b\}$. We now show that $(e,f) \in \ovTheta^*$ must hold. If $e$ and $f$ are not part of the
same induced $K_3$ or diamond of $G$, then $(e,f) \in \ovTheta^*$ holds by Lemma~\ref{lem:caseC}.
Otherwise, one of the following must hold: $e$ and $f$ are edges of (A) an induced $K_3$ or (B) an
induced diamond in $G$.

\smallskip
\noindent
\emph{Case (A): $e$ and $f$ are edges of an induced $K_3$.} In this case, $|\{a,b\}\cap\{x,y\}|=1$
                and $\{c,d\}\cap\{x,y\}= \emptyset$. W.l.o.g.\ we may assume that $y=a$ or $y=b$
                holds. Thus, $e$ is one of $\{x,a\}$ or $\{x,b\}$, i.e., $e\in F =
                \{\{x,a\},\{x,b\}\}$ and the $K_3$ is induced by $a,b,x$. We now distinguish between
                the cases that $x$ and $c$ are or are not adjacent.
					 
\begin{itemize}
	\item[(i)] $\{x,c\}$ is not an edge of $G$. Then, $\{x,a,b,c\}$ induces a paw in $G$ hat contains $e$ and $f$. By
	           Lemma~\ref{lm:small}, we have $(e,f) \in \ovTheta^*$.

	\item[(ii)] $\{x,c\}$ is an edge of $G$. Then four subcases may occur. 

	\begin{itemize}
		\item[(a)] $H$ is a $P_4$ and $\{x,d\}$ is not an edge of $G$. Then $\{a,x,c,d\}$ induces a
		           $P_4$ in $G$, so $(\{x,a\},\{c,d\}) \in \ovTheta^*$ by Lemma~\ref{lem:path}, and
		           $\{x,b,c,d\}$ induces a paw in $G$, so $(\{x,b\},\{c,d\}) \in \ovTheta^*$ by
		           Lemma~\ref{lm:small}. Since $(\{c,d\},f) \in \ovTheta^*$ and $e\in F$,
		            $(e,f) \in \ovTheta^*$ follows.

		\item[(b)] $H$ is a $P_4$ and $\{x,d\}$ is an edge of $G$. Then $\{x,a,b,c,d\}$ induces a gem
		           in $G$ that contains $e$ and $f$.  By Lemma~\ref{lm:small},  $(e,f) \in \ovTheta^*$.

		\item[(c)] $H$ is a paw and $\{x,d\}$ is not an edge of $G$. Then $\{x,a,b,c,d\}$ induces a
		           gem in $G$ that contains $e$ and $f$ and, again,  Lemma~\ref{lm:small} implies  $(e,f) \in \ovTheta^*$.
			      
		\item[(d)] $H$ is a paw and $\{x,d\}$ is an edge of $G$. Then $\{x,a,b,c,d\}$ induces a
		           $X$-house in $G$ that contains $e$ and $f$.  By Lemma~\ref{lm:small},  $(e,f) \in \ovTheta^*$.
	\end{itemize}
\end{itemize}

\noindent
\emph{Case (B): $e$ and $f$ are edges of an induced diamond $D$.}
Note that if $e$ and $f$ share a vertex, then they are either edges of an induced $K_3$, in which
case we are back to case (A), or they are not part of the same induced $K_3$, in which case $e\cup
f$ induce a path on three vertices and we have $(e,f)\in \ovTheta^*$
by Lemma \ref{lem:path}. Hence, it remains to investigate the case where $e$ and $f$ are
non-adjacent edges of an induced diamond.

By assumption $e$ and $f$  as well as $f$ and $\{c,d\}$ are non-adjacent. However, 
$e=\{c,d\}$ is not possible since then $e\cup f = \{a,b,c,d\}$ would not induce a diamond
as they induce a $P_4$ or a paw. We thus, distinguish between the case 
$|e  \cap \{c,d\}|=1$ and $e  \cap \{c,d\} \neq \emptyset$. 
We start with the first case and  assume w.l.o.g.\ that $x \in \{c,d\}$
and thus, $y \notin \{a,b,c,d\}$.

Suppose first that $x=c$. Since $\{a,b,x,y\}$ induces a diamond in $G$ and $x=c$ and $a$ are not adjacent in $G$,
both $\{a,y\}$ and $\{b,y\}$ are edges of $G$. From there, three cases may occur:
\begin{itemize}
\item[(i)] $H$ is a paw. Then $\{a,b,c,d,y\}$ induces a $X$-house in $G$ if $\{y,d\}$ is an edge of
           $G$, and a gem otherwise. In both cases, $(e,f) \in \ovTheta_G^*$ follows from
           Lemma~\ref{lm:small}.
\item[(ii)] $H$ is a $P_4$ and $\{y,d\}$ is an edge of $G$. Then $\{a,b,c,d,y\}$ induces a gem in
            $G$, and $(e,f) \in \ovTheta_G^*$ follows from Lemma~\ref{lm:small}.
\item[(iii)] $H$ is a $P_4$ and $\{y,d\}$ is not an edge of $G$. Then $\{b,c,d,y\}$ induces a paw in
             $G$, so $(e,\{c,d\}) \in \ovTheta_G^*$ by Lemma~\ref{lm:small}. Since $(f,\{c,d\}) \in
             \ovTheta_G^*$, $(e,f) \in \ovTheta_G^*$ follows.
\end{itemize}

Suppose now that $x=d$. If $H$ is a $P_4$, then $x$ is neither adjacent to $a$ nor to $b$. But then
$\{a,b,x,y\}$ cannot induce a diamond in $G$. Hence, $H$ must be a paw. In this case, we have the
edges $\{a,y\}$ and $\{b,y\}$ in $G$ since $x$ and $a$ are not adjacent. Then, $\{a,b,c,d,y\}$
induces an $X$-house in $G$ if $\{y,c\}$ is an edge of $G$, and a gem otherwise. In both cases,
$(e,f) \in \ovTheta_G^*$ follows from Lemma~\ref{lm:small}.

Now, consider the case $e \cap \{c,d\}=\emptyset$. Without loss of generality, we may assume that $x$ has
degree $2$ in $D$ and $y$ has degree $3$ in $D$. In particular, $\{y,a\}$ and $\{y,b\}$ are both
edges of $H$, and exactly one of $\{x,a\}$ or $\{x,b\}$ is an edge of $G$. We remark first that, if
neither $\{x,c\}$ nor $\{y,c\}$ are edges of $G$, then $\{x,y,b,c\}$ induces a paw in case $\{x,b\}$
is an edge of $G$ and a $P_4$ otherwise. It follows from Lemma~\ref{lm:small} and
Lemma~\ref{lem:path} that $(e,\{b,c\}) \in \ovTheta^*$ must hold. Since, in addition, $(\{b,c\},f)
\in \ovTheta^*$ holds, it follows that $(e,f) \in \ovTheta^*$. Next, we consider the cases where $c$
is adjacent to at least one of $x$ or $y$.

\begin{itemize}
\item[(i)] 
	$\{x,c\}$ is an edge of $G$ and $\{y,c\}$ is not. If $\{x,b\}$ is an edge of $G$, then
	$\{x,y,a,b,c\}$ induces a gem in $G$, and $(e,f) \in \ovTheta^*$ follows from
	Lemma~\ref{lm:small}. If otherwise, $\{x,a\}$ is an edge of $G$, then $\{x,y,b,c\}$ induces a
	$C_4$ in $G$. In that case, we have $(e,\{b,c\}) \in \ovTheta^*$ by Lemma~\ref{lm:small} which
	together with $(\{b,c\},f) \in \ovTheta^*$ implies that $(e,f) \in \ovTheta^*$.

\item[(ii)] 
	$\{y,c\}$ is an edge of $G$ and $\{x,c\}$ is not. If $\{x,a\}$ is an edge of $G$, then
	$\{x,y,a,b,c\}$ induces a gem in $G$, and $(e,f) \in \ovTheta^*$ follows from
	Lemma~\ref{lm:small}. Suppose now that $\{x,a\}$ is not an edge of $G$, that is, $\{x,b\}$ is an
	edge of $G$. In that case, we need to consider the vertex $d$. If neither $\{x,d\}$ nor $\{y,d\}$
	are edges of $G$, then $\{x,y,c,d\}$ induces a $P_4$ in $G$. If $\{x,d\}$ is an edge of $G$ and
	$\{y,d\}$ is not, then $\{x,y,c,d\}$ induces a $C_4$ in $G$. If $\{y,d\}$ is an edge of $G$ and
	$\{x,d\}$ is not, then $\{x,y,c,d\}$ induces a paw in $G$. In all these cases,
	Lemma~\ref{lem:path} and Lemma~\ref{lm:small} imply that $(e,\{c,d\}) \in \ovTheta^*$ . Since
	$(\{c,d\},f) \in \ovTheta^*$ also holds, $(e,f) \in \ovTheta^*$ follows. Finally, if both
	$\{x,d\}$ and $\{y,d\}$ are edges of $G$, then $\{x,y,a,b,d\}$ induces a $X$-house in $G$ in case
	$H$ is a paw and a gem if $H\simeq P_4$. 
	In both cases, $(e,f) \in \ovTheta^*$ follows from Lemma~\ref{lm:small}.

\item[(iii)] 
	Both $\{x,c\}$ and $\{y,c\}$ are edges of $G$. If $\{x,b\}$ is an edge of $G$, then
	$\{x,y,a,b,c\}$ induces a $X$-house in $G$, and $(e,f) \in \ovTheta^*$ follows from
	Lemma~\ref{lm:small}. Suppose now that $\{x,b\}$ is not an edge of $G$, that is, $\{x,a\}$ is an
	edge of $G$. In that case, we need to consider the vertex $d$. If neither $\{x,d\}$ nor $\{y,d\}$
	are edges of $G$, then $\{x,y,c,d\}$ induces a paw in $G$. If exactly one of $\{x,d\}$ or
	$\{y,d\}$ is an edge of $G$, then $\{x,y,a,c,d\}$ induces a gem in $G$. In both these cases
	Lemma~\ref{lm:small} implies that $(e,\{c,d\}) \in \ovTheta^*$ . Since $(\{c,d\},f) \in
	\ovTheta^*$ also holds, $(e,f) \in \ovTheta^*$ follows. Finally, if both $\{x,d\}$ and $\{y,d\}$
	are edges of $G$, then $\{x,y,a,c,d\}$ induces an $X$-house in $G$, so $(e,\{c,d\}) \in \ovTheta$
	follows again from Lemma~\ref{lm:small}. Again, $(e,f) \in \ovTheta^*$ follows from the fact that
	$(\{c,d\},f) \in \ovTheta^*$.
\end{itemize}
In summary, if $|V| \geq 5$ and $G$ is $K_3$-free or contains a $K_5$, a paw, a $P_4$ as an induced subgraphs: 
then  $\ovTheta_G^*$ is $1$-trivial. \smallskip

We show now, by contraposition, that (1) implies (2.II) for the case $|V| \geq 5$. Hence, suppose
(2.II) is not satisfied, i.e., $G$ is not $K_3$-free and does not contain a $K_5$, a paw, a $P_4$ as
an induced subgraph. Since $G$ is not $K_3$-free, there are three vertices $x,y,z \in V$ that induce
a $K_3$ in $G$. Put $e\coloneqq \{x,y\}$ and $f\coloneqq \{x,z\}$. By Lemma \ref{lem:furtherBasics},
$(e,f)\notin \ovTheta$. We continue with showing that $(e,f)\notin \ovTheta^*$ which immediately
implies that $\ovTheta_G^*$ is not $1$-trivial. 

Assume, for contradiction, that $(e,f)\in \ovTheta^*$. Hence, there are edges $e_1,\dots, e_\ell\in
E$, $\ell\geq 1$ such that $(e,e_1), (f,e_\ell),(e_i,e_{i+1})\in \ovTheta$,
$1\leq i<\ell$. Without loss of generality, we may assume that $e$ and $f$ are chosen such that
$\ell$ is minimum, i.e., there are no two edges $e',f'$ that are part of a common $K_3$ (and thus,
$(e',f')\notin \ovTheta$) for which there are $\ell' < \ell$ edges edges $e'_1,\dots,e'_{\ell'}$
with $(e',e'_1), (f',e'_\ell),(e'_i,e'_{i+1})\in \ovTheta$, $1\leq i<\ell'$ (implying $(e',f')\in
\ovTheta^*$). This, in particular, implies that no two edges in $\{e, e_1, \ldots, e_\ell,f\}$ are
part of a common $K_3$ in $G$, except the edges $e$ and $f$. 

Consider now $e_1=\{u,v\}$. Since $(e,e_1) \in \ovTheta$, we have 
\begin{equation}
d(x,u)+d(y,v)=d(x,v)+d(y,u). \label{eq:1}
\end{equation}
Note that $K\coloneqq \max\{d(x,u),d(y,v),d(x,v),d(y,u)\} \leq 2$ since $G$ does not contain an
induced $P_4$. We continue with proving the following claim. 

\begin{owndesc}
	\item[{\textnormal{\em Claim X:  $\{x,y,z,u,v\}$ induces a $K_5$ from which either the edge $\{z,v\}$ or $\{z,u\}$ has been removed.}}] \ \\
	W.l.o.g.\ we may assume that $d(x,u)=\min\{d(x,u),d(y,v),d(x,v),d(y,u)\}$. Put $k\coloneqq
	d(x,u)$. Since $k\leq 2$, we continue to consider the following subcases $k=0$, $k=1$, and $k=2$.

\begin{owndesc}
\item[\textnormal{\em Case $k=0$:}] 
	In this case, we have $x=u$ and thus, $d(x,v)=1$ and $d(y,u)=1$. This together with Equ.\
	\eqref{eq:1} and $d(x,u)=0$ implies that $d(y,v)=2$. Hence, $\{v,y\}$ is not an edge of $G$. If
	$\{v,z\}$ is not an edge of $G$, then $\{x,y,z,v\}$ induces a paw in $G$; a contradiction to
	paw-freeness of $G$. If otherwise, $\{v,z\}$ is an edge of $G$, then $\{u=x,v,z\}$ induces a
	$K_3$ in $G$. In particular, $e_1=\{u,v\}$ and $f=\{x,z\}$ are part of the same $K_3$, which
	contradicts the choice of $e$ and $f$ and minimality of $\ell$. Hence, $k=0$ cannot hold.

\item[\textnormal{\em Case $k=1$:}] 
	In this case, $\{x,u\}$ is an edge of $G$ and thus, $d(x,u)=1$. Furthermore,
	$d(y,u),d(y,v),d(x,v)\geq k$ implies that $u\neq x,y$ and $v\neq x,y$. Suppose first that
	$\{y,u\}$ is not an edge in $G$ and thus, $u\neq z$ and $d(y,u)=2$. Equ.\ \eqref{eq:1} and the
	fact that $k\leq 2$ imply that $d(y,v)=2$ and $d(x,v)=1$ must hold. Hence, $\{x,v\}$ is an edges
	of $G$ while $\{y,v\}$ is not. Thus, $\{x,y,u,v\}$ induces a paw in $G$; a contradiction to
	paw-freeness of $G$.

	Therefore, $\{y,u\}$ must be an edge in $G$. Equ.\ \eqref{eq:1} together with $d(x,u)=d(y,u)=1$
	implies that $d(x,v)=d(y,v)\leq 2$ and thus, either both or none of $\{x,v\}$ and $\{y,v\}$
	are edges of $G$. Note that $z=u$ or $z=v$ is possible. If none of $\{x,v\}$ and $\{y,v\}$ are
	edges of $G$, then $z\neq u,v$ which together with $u,v\neq x,y$ implies that $\{x,y,u,v\}$
	induces a paw in $G$; a contradiction to paw-freeness of $G$.
	
	Therefore, both of $\{x,v\}$ and $\{y,v\}$ are edges of $G$. Since $u,v,x,y$ are pairwise
	distinct, $\{x,y,u,v\}$ induces a $K_4$ in $G$. We distinguish now between the cases $z=u$, $z=v$
	and $z\neq u,v$. Assume that $z=u$. Since $\{x,u\}$, $\{u,v\}$ and $\{x,v\}$ are edges in $G$, it
	follows that $e_1=\{u,v\}$ and $f=\{x,z\}$ are part of the same $K_3$, which contradicts the
	choice of $e$ and $f$ and minimality of $\ell$. Hence, $z\neq u$ holds. By similar arguments,
	$z\neq v$ holds. Thus, $x,y,z,u,v$ are pairwise distinct. If neither $\{z,u\}$ not $\{z,v\}$ are
	edges of $G$, then $\{x,z,u,v\}$ induces a paw in $G$; a contradiction to paw-freeness of $G$. If
	both $\{z,u\}$ and $\{z,v\}$ are edges of $G$, then $\{x,y,z,u,v\}$ induces a $K_5$ in $G$; a
	contradiction to $K_5$-freeness of $G$. Hence exactly one of $\{z,u\}$ or $\{z,v\}$ is an edge of
	$G$ and \emph{Claim X} holds.

\item[\textnormal{\em Case $k=2$:}] 
	Finally, suppose that $k=2$. Then, we must have $d(x,u)=d(y,v)=d(x,u)=d(y,v)=2$. In particular,
	neither $\{x,u\}$ nor $\{x,v\}$ are edges of $G$, and there exists $w \in G$ such that $\{x,w\}$ and
	$\{u,w\}$ are edges of $G$ (note that $w=z$ might hold). However, $\{x,u,v,w\}$ induces a paw
	in case $\{v,w\}$ is an edge of $G$, and a $P_4$ otherwise. None of the cases can occur since 
	$G$ is $P_4$- and paw-free. This completes the proof of \emph{Claim X}. 
\end{owndesc}
\end{owndesc}

We can apply similar arguments to the edge $e_\ell=\{u',v'\}$ to conclude that $\{x,y,z,u',v'\}$
induces a $K_5$ from which either the edge $\{y,v'\}$ or $\{y,u'\}$ has been removed. Note that
$e_1\cap e_\ell \neq \emptyset$ may be possible. However, since we can choose the vertex labelings
in $e_1=\{u,v\}$ and $e_\ell=\{u',v'\}$ arbitrarily, we can assume that $\{z,v\}$ and $\{y,v'\}$ are
not edges in $G$. This and the fact that $\{y,v\}$ is an edge of $G$ implies that $v\neq v'$.
Moreover, $\{v,v'\}$ must be an edge of $G$, as otherwise, $\{y,z,v,v'\}$ induces a $P_4$ in $G$.
Note that $u\neq v$ and $u'\neq v'$ always holds. In addition, since $\{y,u\}$ is an edge of $G$ but
not $\{y,v'\}$, we have $u\neq v'$. Similarly, $\{z,u'\}\in E(G)$ and $\{z,v\}\notin E(G)$
implies that $u'\neq v$. If, however, $u=u'$ holds, then $\{u=u',v,v'\}$ induces a $K_3$ that
contains $e_1=\{u,v\}$ and $e_\ell=\{u',v'\}$; a contradiction to the choice of $e$ and $f$. Hence,
$u \neq u'$ must hold. In summary, the vertices $x,y,z,u,v,u',v'$ are pairwise distinct.
	
Note that $\{u,u'\}$ cannot be an edge of $G$ since, otherwise, $\{x,y,z,u,u'\}$ induces a $K_5$ in
$G$, which is forbidden. Moreover, $\{u',v\}$ must be an edge of $G$, since otherwise,
$\{u,v,x,u'\}$ induces a paw in $G$. Note that $d(x,v) + d(y,u') = 1+1 = d(x,u') + d(y,v)$ and,
thus, for $e'=\{u',v\}$ we have $(e',e_1) \in \ovTheta$. In particular, the sequence $e',e_1,
\ldots, e_\ell$ is such that $(e',e_1) \in \ovTheta$, and $(e_i,e_{i+1}) \in \ovTheta^*$ for all $i
\in \{1, \ldots, \ell-1\}$. As argued above, $\{v,v'\}$ is an edge of $G$ and, consequently,
$\{u',v',v\}$ induce a $K_3$ in $G$ that contains $e'$ and $e_\ell$. However, since the sequence $e', e_1, \ldots, e_\ell$ is
strictly shorter than the sequence $e, e_1, \ldots, e_\ell, f$ we obtain a contradiction to the
choice of $e$ and $f$. 

To summarize, we have shown that if (2.II) does not hold, then we can find three vertices $x,y,z$ in
a common $K_3$ and resulting edges $e=\{x,y\}$ and $f=\{x,z\}$ such that $(e,f)\notin \ovTheta^*$.
Hence, $\ovTheta_G^*$ is not $1$-trivial. Thus, (1) implies (2.II) in case $|V|\geq 5$.
\end{proof}

We can now employ Prop.\ \ref{prop:ovTheta*-trivial} to characterize graphs $G$ for which
$\ovTheta_G^*$ is not $1$-trivial. As it turns out, this is the case for only a
surprisingly limited subclass of complete multipartite graphs.

\begin{theorem}\label{thm:ovTheta*-non-1trivial}
	The following statements are equivalent for every connected graph $G$.
	\begin{enumerate}
	\item $\ovTheta_G^*$ contains $k>1$ equivalence classes.
	\item  $G$ is a complete multipartite graph $K_{n_1,\dots, n_\ell}$ with $\ell\in \{3,4\}$.
	\end{enumerate}
	In this case,  it holds that $\Delta_{e,f}\neq 3$ and $\Delta_{e,f}\subseteq \{1,2\}$
	for all non-adjacent edges $e,f$ of $G$. 	
\end{theorem}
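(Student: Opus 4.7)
The plan is to use Proposition \ref{prop:ovTheta*-trivial} as the main tool, since it already characterizes $1$-triviality of $\ovTheta_G^*$ via small forbidden and enforced subgraphs; the task reduces to interpreting its negation in terms of complete multipartite graphs with few parts.

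For the direction (1) $\Rightarrow$ (2), I would split on $|V(G)|$. If $|V(G)|\leq 4$, Prop.~\ref{prop:ovTheta*-trivial}(I) forces $G$ to be one of $K_3$, the diamond, or $K_4$; each of these is already a complete multipartite graph, namely $K_{1,1,1}$, $K_{1,1,2}$, and $K_{1,1,1,1}$, with $\ell \in \{3,4\}$. If $|V(G)|\geq 5$, Prop.~\ref{prop:ovTheta*-trivial}(II) tells me that $G$ contains an induced $K_3$ and is $\{K_5,\text{paw},P_4\}$-free. Since $G$ is connected, paw-free, and contains a $K_3$, Theorem~\ref{thm:paw-free} yields that $G$ is complete multipartite, say $G \simeq K_{n_1,\dots,n_\ell}$. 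The presence of $K_3$ forces $\ell \geq 3$, while $K_5$-freeness forces $\ell \leq 4$ (otherwise picking one vertex from each of five parts would induce a $K_5$).

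For the converse (2) $\Rightarrow$ (1), I would first note that every complete multipartite graph is $P_4$-free: the complement of $K_{n_1,\dots,n_\ell}$ is a disjoint union of cliques, which contains no induced $P_4$, and $P_4$ is self-complementary. The small cases $K_{1,1,1}, K_{1,1,2}, K_{1,1,1,1}$ are handled directly by Prop.~\ref{prop:ovTheta*-trivial}(I). For $|V(G)|\geq 5$ with $G \simeq K_{n_1,\dots,n_\ell}$ and $\ell \in \{3,4\}$, the graph $G$ contains a $K_3$ (so is not $K_3$-free), is $K_5$-free (since $\ell \leq 4$), is paw-free (being complete multipartite) and $P_4$-free (by the complementation argument). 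The contrapositive of Prop.~\ref{prop:ovTheta*-trivial}(II) then gives that $\ovTheta_G^*$ has more than one equivalence class.

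For the final assertion, I would invoke Proposition~\ref{prop:DN3-nonAdj}: a complete multipartite graph has diameter at most $2$, since any two non-adjacent vertices lie in a common part and share a neighbor in any other part. Hence $|\Delta_{ef}| \neq 3$ and $\Delta_{ef} \subseteq \{1,2\}$ for all non-adjacent edges $e,f$. I expect no real obstacle: the proof is essentially a bookkeeping exercise combining Prop.~\ref{prop:ovTheta*-trivial}, Olariu's Theorem~\ref{thm:paw-free}, and the pleasant observation that the three exceptional small graphs of Prop.~\ref{prop:ovTheta*-trivial}(I) are precisely the complete multipartite graphs on $\leq 4$ vertices with $\ell \in \{3,4\}$ parts.
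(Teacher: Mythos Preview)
Your proposal is correct and follows essentially the same route as the paper's proof: both split on $|V(G)|\le 4$ versus $|V(G)|\ge 5$, invoke Prop.~\ref{prop:ovTheta*-trivial} in each case, use Theorem~\ref{thm:paw-free} to recognize complete multipartite graphs, and finish the $\Delta$-assertion via $\diam_G\le 2$ and Prop.~\ref{prop:DN3-nonAdj}. The only cosmetic difference is that you justify $P_4$-freeness of complete multipartite graphs via the self-complementarity of $P_4$, whereas the paper simply asserts it.
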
	
\begin{proof}
Suppose that $\ovTheta_G^*$ contains $k>1$ equivalence classes. If $|V(G)| \leq 4$, then
Prop.~\ref{prop:ovTheta*-trivial} implies that $G$ is a diamond (and thus, a $K_{1,1,2}$), a $K_3$
(and thus, a $K_{1,1,1}$) or a $K_4$ (and thus, a $K_{1,1,1,1}$). In all cases, $G$ is a complete
multipartite graph $K_{n_1,\dotsm n_\ell}$ with $\ell\in \{3,4\}$. Assume that $|V(G)| \geq 5$. By
Prop.~\ref{prop:ovTheta*-trivial}, $G$ is not $K_3$-free but paw-free which implies together with
Theorem \ref{thm:paw-free} that $G$ must be a complete multipartite graph $K_{n_1,\dotsm n_\ell}$.
Since $G$ contains a $K_3$, $\ell \geq 3$ must hold. By Prop.~\ref{prop:ovTheta*-trivial}, $G$ is
$K_5$-free and thus, $\ell < 5$ holds. Hence, $\ell\in \{3,4\}$.

Assume now $G$ is a complete multipartite graph $K_{n_1,\dots, n_\ell}$ with $\ell\in \{3,4\}$. In
this case, $\ell\geq 3$ implies that $G$ contains a $K_3$. This implies that if $|V(G)| \leq 4$ then
$G$ can only be a $K_{1,1,1}$, $K_{1,1,2}$ or $K_{1,1,1,1}$ and thus, a $K_3$, diamond or a $K_4$.
In this case, Prop.~\ref{prop:ovTheta*-trivial}(2.I) implies that $\ovTheta_G^*$ has $k>1$
equivalence classes. Assume that $|V(G)| \geq 5$. Since $G$ is a complete multipartite graph, $G$
does not contain induced $P_4$s. Moreover, by Theorem \ref{thm:paw-free}, $G$ is paw-free. Since
$\ell \leq 4$, $G$ contains no induced $K_5$. As argued above, $G$ is not $K_3$-free. Taken the
latter arguments together with Prop.~\ref{prop:ovTheta*-trivial}(2.II), we can conclude that
$\ovTheta_G^*$ has $k>1$ equivalence classes. 

One easily observes that $\diam_G\leq 2$ holds for every complete multipartite graph $G$.
Prop.~\ref{prop:DN3-nonAdj} implies that $\Delta_{e,f}\neq 3$ and $\Delta_{e,f}\subseteq \{1,2\}$
for all non-adjacent edges of $G$. 	
\end{proof}

In what follows we are interested in the structure of the $\ovTheta$-relation for which $\ovTheta^*$
is not $1$-trivial. In particular, we ask when an arbitrary binary relation is such a
$\ovTheta$-relation. Clearly, based on Theorem~\ref{thm:ovTheta*-non-1trivial}, the structure of
$\ovTheta$ must be rather restricted. Nevertheless, its structure is surprisingly closely related to
Cartesian graph products, see Fig.\ \ref{fig:R=ovT}.

\begin{figure}[t]
\centering
\includegraphics[width=0.8\textwidth]{./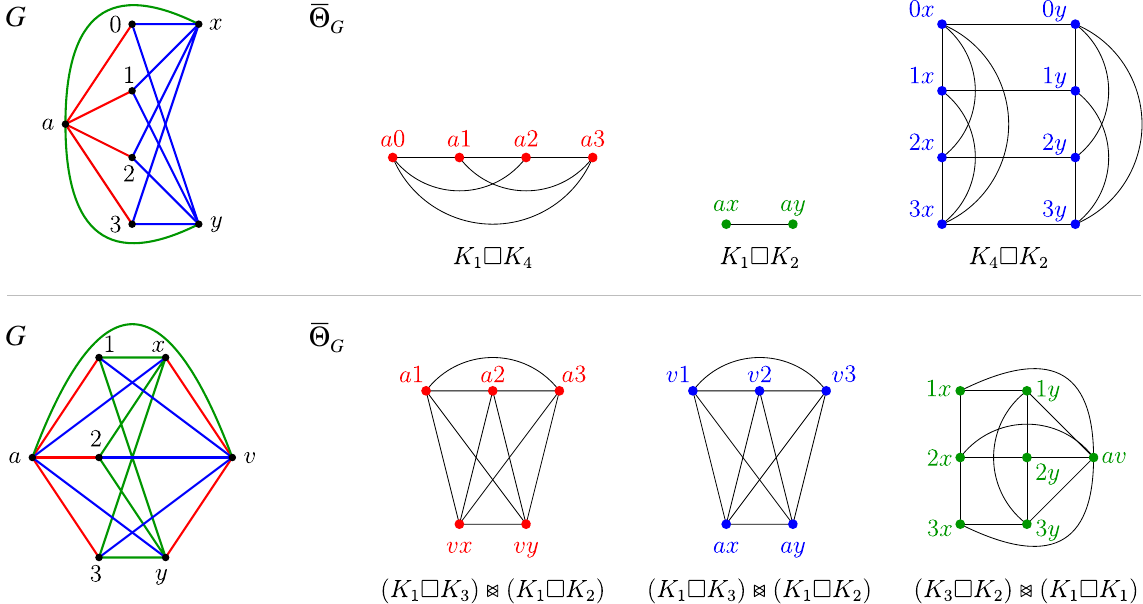}
\caption{\emph{Top:} A complete multipartite graph $G = K_{1,2,4}$ and its relation $\ovTheta_G$. 
			Here $\mathscr{G}_{\ovTheta_G}$ has three connected components $H_1,H_2$ and $H_3$ where 
			$H_1\simeq K_1\Box K_4$, 
			$H_2\simeq K_1\Box K_2$ and 
			$H_3\simeq K_4\Box K_2$. Assuming that $V(K_1)=\{a\}$,
			$V(K_2)=\{x,y\}$ and 	$V(K_4)=\{0,1,2,3\}$ we obtain 
			the coordinates of the underlying products in 
			the connected components $H_1,H_2,H_3$ that 
			are in a 1:1 correspondence with the edges in $G$. 
		 In particular, there is  1:1 correspondence between the maximal independent sets in $G$ and the factors $K_1,K_2,K_4$. \newline{}
			\emph{Bottom:} A complete multipartite graph $G = K_{1,1,2,3}$ and its relation $\ovTheta_G$. 
			Here, $\mathscr{G}_{\ovTheta_G}$ has three connected components $H_1,H_2$ and $H_3$ where 
				$H_1\simeq H_2 \simeq (K_1\Box K_3) \protect\join (K_{1}\Box K_{2})$ and 
				$H_3\simeq (K_3\Box K_2) \protect\join (K_{1}\Box K_{1})$. Again, there is a 1:1 correspondence
				between the maximal independent sets in $G$ and the four factors used to build $\mathscr{G}_{\ovTheta_G}$: 
				two $K_1$'s with vertex set $\{a\}$, resp., $\{v\}$, a $K_2$ with vertex set $\{x,y\}$ and a $K_3$
				with vertex set $\{1,2,3\}$.
}
\label{fig:R=ovT}
\end{figure}

 \begin{theorem}\label{thm:structure-ovTheta}
For a given relation $R$  there is a connected graph $G$ such that $R=\ovTheta_G$ and  
	for which $\ovTheta_G^*$ is not $1$-trivial if and only if 
	$\mathscr G_R$ has exactly three connected components $H_1,H_2,H_3$ and 
	precisely one of the following conditions is satisfied:
	\begin{enumerate}
		\item There are  integers $p,q,r$ such that 
				$H_1\simeq K_p\Box K_q$, $H_2\simeq K_p\Box K_r$, and $H_3\simeq K_q\Box K_r$.
		\item There are  integers $p,q,r,s$ such that 
				$H_1\simeq (K_p\Box K_q) \join (K_{r}\Box K_{s})$, 
				$H_2\simeq (K_p\Box K_r) \join (K_{q}\Box K_{s})$, and 
				$H_3\simeq (K_p\Box K_s) \join (K_{q}\Box K_{r})$.
	\end{enumerate}
\end{theorem}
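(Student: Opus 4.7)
The plan is to combine Theorem~\ref{thm:ovTheta*-non-1trivial} with a direct computation of $\ovTheta_G$ on complete multipartite graphs. By that theorem, requiring $\ovTheta_G^*$ not to be $1$-trivial forces $G\simeq K_{n_1,\dots,n_\ell}$ with $\ell\in\{3,4\}$, so for the forward direction I only need to describe $\mathscr{G}_{\ovTheta_G}$ for these two families, and for the converse I would reconstruct $G$ as $K_{p,q,r}$ or $K_{p,q,r,s}$ from the factor structure of $\mathscr{G}_R$ and verify $R=\ovTheta_G$ by the same analysis.

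For the forward direction, the key observation is that distances in $K_{n_1,\dots,n_\ell}$ take only the values $0$, $1$, and $2$: $d(u,v)=1$ iff $u,v$ lie in different parts, and $d(u,v)=2$ iff $u\neq v$ and both lie in the same part. Writing $E_{ij}$ for the set of edges between $V_i$ and $V_j$, I would check three cases for distinct edges $e,f$. If $e,f\in E_{ij}$, substituting into the defining equation of $\ovTheta_G$ yields $(e,f)\in\ovTheta_G$ iff $e$ and $f$ share exactly one vertex, which under the natural bijection $E_{ij}\leftrightarrow V_i\times V_j$ is precisely the edge relation of $K_{n_i}\Box K_{n_j}$. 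If $e\in E_{ij}$ and $f\in E_{kl}$ with $|\{i,j\}\cap\{k,l\}|=1$, then one side of the defining equation equals $2$ while the other lies in $\{1,3\}$, so $(e,f)\notin\ovTheta_G$. If $\{i,j\}\cap\{k,l\}=\emptyset$ (only possible for $\ell=4$), then all four cross-distances equal $1$, so $(e,f)\in\ovTheta_G$ unconditionally. For $\ell=3$ the third case is vacuous and the three sets $E_{12}, E_{13}, E_{23}$ are the three claimed components, each of the form $K_{n_i}\Box K_{n_j}$; this yields Condition~(1). For $\ell=4$ the six sets $E_{ij}$ merge along the complementary index pairs $\{12,34\}, \{13,24\}, \{14,23\}$ into three components, and the third case forces each such component to be the join of its two constituent Cartesian products; this yields Condition~(2).

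For the converse, given $R$ with $\mathscr{G}_R$ as in~(1), I would set $G\coloneqq K_{p,q,r}$ and identify each component $H_i$ with the corresponding induced subgraph of $\mathscr{G}_{\ovTheta_G}$ on the appropriate $E_{ij}$; the key point is that each of $K_p, K_q, K_r$ appears in exactly two of $H_1, H_2, H_3$, which forces a consistent labelling of the three parts of $G$. Condition~(2) is handled analogously with $G\coloneqq K_{p,q,r,s}$ and four factors. In both cases $G$ is connected and, by Theorem~\ref{thm:ovTheta*-non-1trivial}, $\ovTheta_G^*$ is not $1$-trivial; the mutual exclusivity of~(1) and~(2) follows because a Cartesian product of two complete graphs is never a non-trivial join. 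The main obstacle is the bookkeeping: carefully verifying the join structure in the third case of the forward direction, and consistently identifying the factor labels across the three components in the converse, especially when some $n_i=1$ and $K_1\Box K_m\simeq K_m$ can disguise the factorization. Everything else reduces to the elementary distance computation sketched above.
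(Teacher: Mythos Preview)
Your proposal is correct and follows essentially the same route as the paper: invoke Theorem~\ref{thm:ovTheta*-non-1trivial} to reduce to $G\simeq K_{n_1,\dots,n_\ell}$ with $\ell\in\{3,4\}$, compute $\ovTheta_G$ on such $G$ by a case split on which parts the endpoints of $e$ and $f$ lie in, and for the converse reconstruct $G$ as $K_{p,q,r}$ or $K_{p,q,r,s}$ from the factor data. The only cosmetic difference is that the paper phrases the within-$E_{ij}$ and overlapping-index cases via Lemma~\ref{lem:SP} and Observation~\ref{obs:antipodal} (adjacent edges on an induced $P_3$, antipodal edges in a $C_4$) rather than your bare distance arithmetic, but the content is identical. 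Your caution about the $n_i=1$ bookkeeping and the consistent identification of the $K_{n_i}$-factors across the three components is well placed; the paper is equally sketchy there, simply asserting that the converse ``can be done by similar arguments as in the \emph{only if}-direction.''
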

\begin{proof}
	Suppose that $R=\ovTheta_G$ for some connected graph $G=(V,E)$ and that $\ovTheta_G^*$ is not
	$1$-trivial. By Thm.~\ref{thm:ovTheta*-non-1trivial}, $G$ is a complete multipartite graph
	$K_{n_1,\dots, n_\ell}$ with $\ell\in \{3,4\}$. Let $V_1,\dots,V_\ell$ be the maximal independent
	sets of $G$ and put $[1:n]\coloneqq \{1,\dots,n\}$. In the following, we assume that $V_i =
	\{x_i\mid x\in [1:n_i]\}$. Hence, $\{x_i,y_j\}\in E$ if and only if $x$ and $y$ are integers such
	that $x\leq n_i$, $y\leq n_j$ and $i\neq j$. Consider now the graph representation $\mathscr G_R
	= (W,F)$ of $R$. For the sake of simplicity, we may assume w.l.o.g.\ that the vertices in $W$ are
	coordinatized, that is, we assume that each vertex $w\in W$ has coordinates $(x_i,y_j)$ where $w$
	represents the edge $\{x_i,y_j\}$ and $i < j$. Hence, the edges in $\mathscr G_R$ are of the form
	$\{(x_i,y_j),(x'_i,y'_j)\}$. 
	
	Let $i,j\in [1:\ell]$ be distinct. Observe first that each $x_i\in V_i$ is adjacent to all
	$y_j\in V_j$ in $G$ and that $\{x_i,y_j,y'_j\}$ with $y\neq y'$ induces always a shortest path in
	$G$ with edges $\{x_i,y_j\}$ and $\{x_i,y'_j\}$. By Lemma \ref{lem:SP}, $(\{x_i,y_j\},
	\{x_i,y'_j\})\in \ovTheta_G$ and thus, $\{(x_i,y_j),(x_i,y'_j)\}$ is an edge in $\mathscr G_R$
	whenever $y\neq y'$. In other words, the vertices $(x_i,y_j)$ induced a complete graph
	$K_{|V_j|}$ in $\mathscr G_R$ for each fixed $x\in [1:n_i]$ and all $y\in [1:n_j]$. Similarly,
	the vertices $(x_i,y_j)$ induced a complete graph $K_{|V_j|}$ in $\mathscr G_R$ for each fixed
	$y\in [1:n_j]$ and all $x\in [1:n_i]$. Moreover, $(x_i,y_j)$ and $(x'_i,y'_j)$ with $x\neq x'$
	and $y\neq y'$ are not adjacent in $\mathscr G_R$ since $\{x_i,y_j,x'_i,y'_j\}$ induces a $C_4$
	in $G$ with antipodal edges $e=\{x_i,y_j\}$ and $f=\{x'_i,y'_j\}$ which together with Obs.\
	\ref{obs:antipodal} implies that $(e,f)\notin \ovTheta_G$. To summarize, the subgraph $\mathscr
	G_R[W_{ij}]$ in $\mathscr G_R$ induced by $W_{ij}\coloneqq \{(x_i,y_j) \mid x\in [1:n_i], y\in
	[1:n_j]\} = V_i\times V_j$ has edges $\{(x_i,y_j),(x'_i,y'_j)\}$ if and only if $x=x'$ and $y\neq
	y'$ or $x\neq x'$ and $y = y'$. It is now a straightforward task to verify that $\mathscr
	G_R[W_{ij}]$ is connected and, in particular, $\mathscr G_R[W_{ij}]\simeq K_{|V_i|}\Box
	K_{|V_j|}$. Moreover, the sets $W_{ij}$, $1 \leq i<j \leq \ell$ form a partition of $E=W$, so
	each vertex of $\mathscr G_R$ belong to exactly one of the sets $W_{ij}$.
	
	We continue with distinguishing between the case $\ell=3$ and $\ell=4$. Assume first that
	$\ell=3$. In this case, we show that that $\mathscr G_R[W_{ij}]$ forms a connected component in
	$\mathscr G_R$ for any two distinct $i,j\in \{1,2,3\}$. Assume, for contradiction, that this is
	not the case. Hence, there is a vertex $w = (a_r,b_{s})\in W$ with $\{r,s\}\neq \{i,j\}$ (and
	thus, $(a_r,b_{s})\notin W_{ij}$) that is adjacent to at least one vertex $w' = (x_i,y_j)$ in
	$W_{ij}$ and thus, $\{w,w'\}\in F$. Since $\{r,s\}\cup \{i,j\} = \{1,2,3\}$ but $\{r,s\}\neq
	\{i,j\}$ it follows that precisely one of $r$ and $s$ is contained in $\{i,j\}$ while the other
	is not. Assume first that $r=i$ and thus, $s\neq i,j$. Since
	$d_G(y_j,b_s)=d_G(x_i,b_s)=d_G(y_j,a_i)=1$ and $d_G(a_i,x_i) \in \{0,2\}$, we have $d_G(x_i,a_i)
	+ d_G(y_j,b_s) \neq d_G(x_i,b_s) + d_G(y_j,a_i)$. Hence, $(w,w')\notin \ovTheta_G$ and thus,
	$\{w,w'\}\notin F$; a contradiction. By similar arguments, one shows that neither of the cases
	$r=j$, $s=i$ and $s=j$ can occur. Hence, $\mathscr G_R[W_{ij}]\simeq K_{|V_i|}\Box K_{|V_j|}$
	forms a connected component in $\mathscr G_R$. In summary, for $\ell=3$, $\mathscr G_R$ is the
	disjoint union of the three graphs $\mathscr G_R[W_{ij}] \simeq K_{|V_i|}\Box K_{|V_j|}$, $1 \leq
	i<j \leq 3$. Hence, $\mathscr G_R$ has exactly three connected components $H_1,H_2,H_3$ where
	$H_1 \simeq K_{|V_1|}\Box K_{|V_2|}$, $H_2 \simeq K_{|V_1|}\Box K_{|V_3|}$, and $H_3 \simeq
	K_{|V_2|}\Box K_{|V_3|}$.

	Assume now that $\ell=4$. By similar arguments as in the previous case, if there is a vertex $w =
	(a_r,b_{s})\in W$ that is adjacent to a vertex $w' = (x_i,y_j)$ in $W_{ij}$, then $\{r,s\} \cap
	\{i,j\} = \emptyset$ must hold. Hence, $\{r,s\} \cup \{i,j\}=\{1,2,3,4\}$. Put $W_{sr} = V_r\cup
	V_s$. As argued before, $\mathscr G_R[W_{ij}]\simeq K_{|V_i|}\Box K_{|V_j|}$ and $G[W_{rs}]\simeq
	K_{|V_r|}\Box K_{|V_s|}$ holds. Now let $w' = (x_i,y_j)\in W_{ij}$ and $w = (a_r,b_s)\in W_{rs}$
	be chosen arbitrarily. Since $\{r,s\} \cap \{i,j\} = \emptyset$, we have $d_G(x_i,a_r) =
	d_G(y_j,b_s) = d_G(x_i,b_s) = d_G(y_j,a_r) = 1$ and consequently, $d_G(x_i,a_r) + d_G(y_j,b_s) =
	d_G(x_i,b_s) + d_G(y_j,a_r)$. Hence, $(\{x_i,y_j\},\{a_r,b_s\})\in \ovTheta_G$ and thus,
	$\{w,w'\}\in F$. Since $\mathscr G_R[W_{ij}]$ and $\mathscr G_R[W_{rs}]$ are connected and the
	edge $\{w,w'\}$ exists it follows that $G_R[W_{ij}\cup W_{rs}]$ is connected. As argued above, no
	other vertex in $W\setminus (W_{ij}\cup W_{rs})$ can be adjacent to vertices in $W$. Hence,
	$G_R[W_{ij}\cup W_{rs}]$ forms a connected component $H$ of $G_R$. Moreover, since $w' =
	(x_i,y_j)\in W_{ij}$ and $w = (a_r,b_s)\in W_{r,s}$ have been chosen arbitrarily, it follows that
	$H \simeq G[W_{ij}]\join G[W_{rs}] \simeq (K_{|V_i|}\Box K_{|V_j|})\join(K_{|V_r|}\Box
	K_{|V_s|})$. Since the latter arguments hold for all three bipartitions $\{1,2\}\cupdot \{3,4\}$,
	$\{1,3\}\cupdot \{2,4\}$, and $\{1,4\}\cupdot \{2,3\}$ of the set $\{1,2,3,4\}$ and since
	$\cup_{1\leq i<j\leq 4} W_{ij} = W$ it follows that $\mathscr G_R$ has exactly three connected
	components $H_1,H_2,H_3$ where $H_1\simeq (K_{|V_1|}\Box K_{|V_2|})\join(K_{|V_3|}\Box
	K_{|V_4|})$, $H_2\simeq (K_{|V_1|}\Box K_{|V_3|})\join(K_{|V_2|}\Box K_{|V_4|})$ and $H_3\simeq
	(K_{|V_1|}\Box K_{|V_4|})\join(K_{|V_2|}\Box K_{|V_3|})$ which completes the proof of the
	\emph{only if}-direction.

	 For the \emph{if}-direction, suppose first that $\mathscr G_R = (W,F)$ has exactly three
	connected components $H_1,H_2,H_3$ and that there are three integers $p,q,r$ such that $H_1\simeq
	K_p\Box K_q$, $H_2\simeq K_p\Box K_r$, and $H_3\simeq K_q\Box K_r$. W.l.o.g.\ we may assume that
	$V(K_p)$, $V(K_q)$ and $V(K_r)$ are pairwise disjoint and that $V(H_1) = V(K_p)\times V(K_q)$,
	$V(H_2) = V(K_p)\times V(K_r)$ and $V(H_3) = V(K_q)\times V(K_r)$ and, therefore, $W =
	(V(K_p)\times V(K_q))\cupdot (V(K_p)\times V(K_r))\cupdot (V(K_q)\times V(K_r))$. Hence, each
	vertex in $W$ has coordinates $(x,y)$ with $x$ and $y$ not being in the same complete subgraph
	$K_h$, $h\in \{p,q,r\}$. Let $G = (V,E)$ be the graph with vertex set $V = V(K_p)\cupdot
	V(K_q)\cupdot V(K_r)$ and edges $\{x,y\}$ precisely if $(x,y)\in W$. Since $W$ does not contain
	vertices $(x,y)$ with $x,y\in K_h$, $h\in \{p,q,r\}$ it follows that there are no edges
	connecting vertices $x,y\in K_h$, $h\in \{p,q,r\}$. Hence, $V(K_p)$, $V(K_q)$ and $V(K_r)$ form
	independent sets in $G$. Now let $x\in V(K_i)$ and $y\in V(K_j)$ with $i,j\in \{p,q,r\}$ being
	distinct. By construction, either one of $V(K_i)\Box V(K_j)$ or $V(K_j)\Box V(K_i)$ is a subset
	of $W$ and therefore, $(x,y)\in W$ or $(y,x)\in W$. In either case, we have $\{x,y\}\in E$.
	Hence, $G\simeq K_{p,q,r}$ is a complete multipartite graph with $\ell=3$ independent sets. It
	remains to show that $R = \ovTheta_G$. This, however, can be done by similar arguments as in the
	\emph{only if}-direction. Moreover, since $\mathscr G_R=\mathscr G_{\ovTheta_G}$ has three
	connected components, $\ovTheta_G^*$ is not $1$-trivial. 
	
	Assume now that $\mathscr G_R = (W,F)$ has exactly three connected components $H_1,H_2,H_3$ and
	that there are four integers $p,q,r,s$ such that $H_1\simeq (K_p\Box K_q) \join (K_{r}\Box
	K_{s})$, $H_2\simeq (K_p\Box K_r) \join (K_{q}\Box K_{s})$, and $H_3\simeq (K_p\Box K_s) \join
	(K_{q}\Box K_{r})$. Similar as in the previous case, we may assume that $V(K_p)$, $V(K_q)$,
	$V(K_r)$ and $V(K_s)$ are pairwise disjoint and that $W = V(H_1)\cupdot V(H_2) \cupdot V(H_3)$
	with 
	 $V(H_1) = (V(K_p)\times V(K_q)) \cupdot (V(K_r)\times V(K_s))$, 
	 $V(H_2) = (V(K_p)\times V(K_r)) \cupdot (V(K_q)\times V(K_s))$ and 
	 $V(H_3) = (V(K_p)\times V(K_s)) \cupdot (V(K_q)\times V(K_r))$. 
	Again, let $G = (V,E)$ be the graph with vertex set $V = V(K_p)\cupdot V(K_q)\cupdot
	V(K_r)\cupdot V(K_s)$ and edges $\{x,y\}$ precisely if $(x,y)\in W$. By analogous argumentation as
	in the previous case, $V(K_h)$, $h\in \{p,q,r,s\}$ forms an independent set in $G$. Moreover, any
	two vertices $x$ and $y$ in $G$ are either contained in some $V(K_h)$ and thus, are non-adjacent
	or they are contained in distinct $K_i$ and $K_j$. In the latter case, $K_{i}\Box K_{j}$ or
	$K_{j}\Box K_{i}$ is part of one of the three connected components, i.e., the vertex $(x,y)$ or
	$(y,x)$ exists in $\mathscr G_R$. Thus, $\{x,y\}$ is an edge. Hence, $G\simeq K_{p,q,r,s}$ is a
	complete multipartite graph with $\ell=4$ independent sets. It remains to show that $R =
	\ovTheta_G$. This, however, can be done by similar arguments as in the \emph{only if}-direction.
	Moreover, since $\mathscr G_R=\mathscr G_{\ovTheta_G}$ has three connected components,
	$\ovTheta_G^*$ is not $1$-trivial.
\end{proof}

\begin{corollary}\label{cor:1-3-classes}
	$\ovTheta_G^*$ has either $1$ or $3$ equivalence classes for any graph $G$. 
\end{corollary}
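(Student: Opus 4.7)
The plan is to derive the corollary as an almost immediate consequence of Theorem \ref{thm:structure-ovTheta}, preceded by a small reduction to the connected case. The key observation, noted already in the proof of Lemma \ref{lem:Theta*1t}, is that the number of equivalence classes of $\ovTheta_G^*$ equals the number of connected components of $\mathscr G_{\ovTheta_G}$, so it suffices to count those components. For connected $G$, if $\ovTheta_G^*$ is $1$-trivial there is nothing to show; otherwise, I would set $R \coloneqq \ovTheta_G$ and invoke Theorem \ref{thm:structure-ovTheta}, which directly states that $\mathscr G_R$ has exactly three connected components. Hence, $\ovTheta_G^*$ has exactly three equivalence classes in this case.

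The remaining task is to rule out further possibilities when $G$ is disconnected. If $G$ has a single edge-containing connected component $H$, then $\ovTheta_G$ and $\ovTheta_H$ agree as relations on $E(G) = E(H)$, so the above connected analysis applies verbatim. Otherwise, $G$ has at least two components that each contain an edge, and for any $e = \{a,b\}$ and $f = \{x,y\}$ taken from distinct such components all four distances between their endpoints are infinite, so that $d(a,x)+d(b,y) = \infty = d(a,y)+d(b,x)$ and hence $(e,f) \in \ovTheta_G$; this mirrors the infinite-distance argument used in the proof of Lemma \ref{lem:cut-edge}. Since every edge is $\ovTheta_G$-related to every edge in any other component, transitive closure collapses all edges into one equivalence class, so $\ovTheta_G^*$ is $1$-trivial. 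I do not expect any real obstacle: the corollary is essentially a bookkeeping consequence of Theorem \ref{thm:structure-ovTheta}, with the only care needed being the handling of the disconnected case.
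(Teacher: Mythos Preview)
Your proposal is correct and follows essentially the same route as the paper: invoke Theorem~\ref{thm:structure-ovTheta} for the connected case, and for disconnected $G$ reduce either to a single edge-containing component or observe that edges in different components are all $\ovTheta_G$-related (via infinite distances), forcing $1$-triviality. The only omission is the trivial edge-less case, which the paper dismisses with ``there is nothing to show''; otherwise your argument matches the paper's proof almost verbatim.
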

\begin{proof}
	For a connected graph $G$, $\ovTheta_G^*$ is either $1$-trivial or has, by Theorem
	\ref{thm:structure-ovTheta}, exactly three equivalence classes defined by the three connected components
	of $\mathscr G_{\ovTheta_G}$.
	
	Suppose now that $G$ is disconnected. If $G$ is edge-less, then there is nothing to show. Hence,
	assume that $G$ contains edges. Assume first that only one connected component $C$ of $G$
	contains edges, i.e., all other components are $K_1$'s. Hence, $\ovTheta_{G[C]}^*$ is either
	$1$-trivial or has, by Theorem \ref{thm:structure-ovTheta}, exactly three equivalence classes.
	Since no further edges other than those in $G[C]$ exist, it follows that $\ovTheta_G^*=\ovTheta_{G[C]}^*$ is either
	$1$-trivial or has exactly three equivalence. Assume, now that that $G$ contains distinct
	connected components that contain edges. In this case, for any two edges $e$ and $f$ that are
	located in distinct connected components of $G$, we have $\Delta_{ef}=\{\infty\}$ and, therefore,
	$(e,f)\in \ovTheta_G$. It is now a straightforward task to verify that $\ovTheta_G^*$ is
	$1$-trivial. 
\end{proof}

\begin{corollary}
	To determine $\ovTheta^*_G$ for a given graph $G$, explicit knowledge about the distance function $d_G$ of $G$ 
	is not required. 
\end{corollary}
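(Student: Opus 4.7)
The plan is to extract the conclusion directly from the structural characterizations already established, all of which replace distance conditions with purely combinatorial ones on the adjacency structure of $G$. By Corollary \ref{cor:1-3-classes}, the task reduces to deciding whether $\ovTheta^*_G$ has one or three classes and, in the latter case, exhibiting those classes; both subtasks will turn out to require only the edge set of $G$.

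For a connected graph $G$, I would invoke Theorem \ref{thm:ovTheta*-non-1trivial}: $\ovTheta^*_G$ fails to be $1$-trivial precisely when $G\simeq K_{n_1,\dots,n_\ell}$ for some $\ell\in\{3,4\}$. Testing this is a purely structural check on $E(G)$ — equivalently, one verifies that $\overline{G}$ is a disjoint union of three or four cliques and then reads off the maximal independent sets $V_1,\dots,V_\ell$ of $G$. No distance is computed. If the test fails, $\ovTheta^*_G=E(G)\times E(G)$, and we are done. If the test succeeds, I would appeal to the explicit description extracted in the proof of Theorem \ref{thm:structure-ovTheta}: the three equivalence classes are precisely the edge sets $W_{ij}=V_i\times V_j$ for $\{i,j\}\subseteq\{1,2,3\}$ (case $\ell=3$), or $W_{ij}\cupdot W_{rs}$ for the three bipartitions $\{i,j\}\cupdot\{r,s\}$ of $\{1,2,3,4\}$ (case $\ell=4$). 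Once the independent-set partition is known, these classes are determined by adjacency alone.

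For disconnected $G$, I would follow the case distinction used in the proof of Corollary \ref{cor:1-3-classes}. If at most one component $C$ of $G$ carries edges, apply the connected-case analysis to $G[C]$ and extend trivially; no distance information about the other ($K_1$) components is needed. If two or more components each contain an edge, then $\ovTheta^*_G$ is $1$-trivial, a fact whose recognition requires only identifying the connected components of $G$.

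The main conceptual point — and essentially the only thing that needs to be observed — is that each ingredient of the above recipe (connected components, complete multipartite structure, the partition into maximal independent sets) is a property of the adjacency relation. There is no genuine obstacle: the nontrivial work of trading distances for structural conditions has already been carried out in Theorem \ref{thm:ovTheta*-non-1trivial}, Theorem \ref{thm:structure-ovTheta} and Corollary \ref{cor:1-3-classes}, and the present corollary is merely the observation that these characterizations never mention $d_G$.
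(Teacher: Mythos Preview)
Your proposal is correct and follows essentially the same route as the paper's proof: both invoke Theorem~\ref{thm:ovTheta*-non-1trivial} to reduce the non-$1$-trivial case to complete multipartite graphs $K_{n_1,\dots,n_\ell}$ with $\ell\in\{3,4\}$, then read off the three equivalence classes from the independent-set partition via Theorem~\ref{thm:structure-ovTheta}, and handle the disconnected case exactly as in Corollary~\ref{cor:1-3-classes}. Your framing through Corollary~\ref{cor:1-3-classes} and the remark that the complete-multipartite test amounts to checking that $\overline{G}$ is a disjoint union of cliques are mild cosmetic additions, not a different argument.
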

\begin{proof}
	Assume that $G$ is connected. 
	If $G$ is not a complete multipartite graph $K_{n_1,\dots, n_\ell}$ with $\ell\in \{3,4\}$, 
	then Theorem~\ref{thm:ovTheta*-non-1trivial} implies that $\ovTheta^*_G$ is $1$-trivial. 
	Suppose now that $G$ is a complete multipartite graph $K_{n_1,\dots, n_\ell}$ with $\ell\in \{3,4\}$. 
	Let $V_1,\dots,V_\ell$ be the maximal independent sets of $G$ of size $n_1,\dots, n_\ell$, 
	respectively. 
	By Theorem \ref{thm:structure-ovTheta}, $\mathscr G_R$ has exactly three connected components $H_1,H_2,H_3$. 

	Assume first that $\ell=3$. 
	As outlined in the proof of Theorem \ref{thm:structure-ovTheta}, 
	there are three integers $p=n_1,q=n_2,r=n_3$ such that 
	$H_1\simeq K_p\Box K_q$, $H_2\simeq K_p\Box K_r$, and $H_3\simeq K_q\Box K_r$. 
	In this case, 	$H_1$ represents the class of edges between vertices in $V_1$ and $V_2$, 
						$H_2$ represents the class of edges between vertices in $V_1$ and $V_3$, and
						$H_3$ represents the class of edges between vertices in $V_2$ and $V_3$.
	Hence, the equivalence classes of $\ovTheta_G^*$ are completely determined by
	the respective maximal independent sets of $G$.

	Assume now that $\ell=4$. 
	As outlined in the proof of Theorem \ref{thm:structure-ovTheta}, 
   then there are four integers $p=n_1,q=n_2,r=n_3,s=n_4$ such that 
				$H_1\simeq (K_p\Box K_q) \join (K_{r}\Box K_{s})$, 
				$H_2\simeq (K_p\Box K_r) \join (K_{q}\Box K_{s})$, and 
				$H_3\simeq (K_p\Box K_s) \join (K_{q}\Box K_{r})$.
	In this case, 	$H_1$ represents the class of edges between vertices in $V_1$ and $V_2$ and between vertices in $V_3$ and $V_4$, 
						$H_2$ represents the class of edges between vertices in $V_1$ and $V_3$ and between vertices in $V_2$ and $V_4$, and
						$H_3$ represents the class of edges between vertices in $V_1$ and $V_4$ and between vertices in $V_2$ and $V_3$. 
	Again, the equivalence classes of $\ovTheta_G^*$ are completely determined by
	the respective maximal independent sets of $G$. 
	
		To summarize, in none of the cases, 
	explicit knowledge about the distance function $d_G$ of $G$ 
	is  required to determine $\ovTheta_G^*$.

	Assume now that $G$ is not connected. Then determining the connected components 
	and checking if there are at least two connected components that contain edges can
	be done via a simple breath-first search. If $G$ has only one connected component
	$C$ that contains edges, then the equivalence classes of $\ovTheta_{G}^*= \ovTheta_{G[C]}^*$ can be classified 
	as in the latter cases. Otherwise,  if $G$ has more than one connected component that contains
	edges then analogous arguments as in the proof of Cor.\ \ref{cor:1-3-classes}
	show that $\ovTheta_{G}^*$ is $1$-trivial. Again, 
	no explicit knowledge about the distance function $d_G$ of $G$ 
	is  required to determine $\ovTheta_G^*$.
\end{proof}

\section{Summary and Outlook}

In this paper, we explored structural properties of the irreflexive complement $\ovTheta_G$
of the Djokovi\'{c}-Winkler Relation $\Theta_G$. In particular, we characterized those
$\ovTheta_G^*$ having exactly one, respectively, more than one equivalence class. Moreover,
we showed that determining $\ovTheta_G^*$ does not require explicit knowledge about
distances in the underlying graph $G$. In addition, we provided novel characterizations of block
graphs, trees, graphs with diameter less than three, and more than two.

In view of future research, it might be of interest to establish results similar to
Theorem~\ref{thm:structure-ovTheta} assuming that $R=\ovTheta_G$ and $\ovTheta_G^*$ is $1$-trivial.
Moreover, can one characterize those $\Theta$-relations for which $\Theta_G^*$ has $k>1$ equivalence
classes or for which connected components are Cartesian products similar to
Theorem~\ref{thm:structure-ovTheta}? In addition, one may ask which graphs $G$ satisfy $\ovTheta_G^*
= \Theta_G^*$. By Lemma~\ref{lem:Theta*1t}, this can  only happen if both $\ovTheta_G^*$ and $\Theta_G^*$
are $1$-trivial. 

\section*{Acknowledgments}
We thank the organizers of the 10th Slovenian Conference on Graph Theory (2023) in Kranjska Gora,
where the first ideas for this paper were drafted, while enjoying a cold and tasty red Union, or was
it a green La\v{s}ko?

\bibliographystyle{spbasic}
\bibliography{theta}

\end{document}